\theoremstyle{plain} {
  \newtheorem{thm}{Theorem}[section]
  \newtheorem{defn}[thm]{Definition}
  \newtheorem{cor}[thm]{Corollary}
  \newtheorem{lem}[thm]{Lemma}
  \newtheorem{prop}[thm]{Proposition}
  \theoremstyle{definition}
  \newtheorem{rem}[thm]{Remark}
    \newtheorem{constr}[thm]{Construction}
  \theoremstyle{plain}
  \newtheorem{clm}[thm]{Claim}
  \newtheorem{notation}[thm]{Notation}

}
\renewcommand{\subsubsection}{\sssection\rm}
\newcommand{\bG}{\mathbf G}
\newcommand{\bu}{\mathbf u}
\renewcommand{\P}{\mathbb P}
\DeclareMathOperator{\spec}{Spec}
\newcommand{\can}{\text{\rm can}}
\newcommand{\id}{\text{\rm id}}
\newcommand{\pr}{\text{\rm pr}}
\newcommand{\inc}{\text{\rm inc}}
\newcommand{\const}{\text{\rm const}}
\newcommand{\Spec}{\text{\rm Spec}}
\newcommand{\Aff}{\mathbf {A}}
\newcommand{\Pro}{\mathbf {P}}
\newcommand \xra {\xrightarrow }
\newcommand \hra {\hookrightarrow }
\newcommand{\ttf}{{\text{f}}}
\renewcommand{\P}{\mathbb P}
\newcommand\mydim{\text{\rm dim}}
\renewcommand \id{\operatorname{id}}
\renewcommand \phi\varphi
\newcommand{\et}{\text{\rm\'et}}
\newcommand{\ZZ}{\mathbb Z}
\begin{document}

\title{Nice triples and a moving lemma for motivic spaces
}

\author{Ivan Panin\footnote{The author acknowledges support of the
RNF-grant 14-11-00456.}
}


\maketitle

\begin{abstract}
It is proved that for any cohomology theory $A: SmOp/k \to Gr-Ab$ in the sense of \cite{PS}
and any essentially $k$-smooth semi-local $X$
the Cousin complex is exact.
As a consequence we prove that for any integer $n$ the Nisnevich sheaf $\mathcal A^n$,
associated with the presheaf $U\mapsto A^n(U)$, is {\it strictly homotopy invariant}.

Particularly, for any presheaf of $S^1$-spectra $E$ on the category of $k$-smooth schemes
its Nisnevich sheaf of stable $\Aff^1$-homotopy
groups is {\it strictly homotopy invariant}.

The ground field $k$ is arbitrary. We do not use Gabber's presentation lemma. Instead, we use
the machinery of nice triples as invented in \cite{PSV} and developed further in \cite{P3}.
This recovers a known inaccuracy in Morel's arguments in \cite{M}.

The following moving lemma is proved. Let $k$ be a field and $X$ be a quasi-projective variety.
Let $Z$ be a closed subset in $X$ and let $U$ be the semi-local scheme of finitely many closed points
on $X$. Then the natural morphism $U\to X/(X-Z)$ of Nisnevich sheaves is naively $\Aff^1$-homotopic to the constant morphism
of $U\to X/(X-Z)$ sending $U$ to the distinguished point of $X/(X-Z)$.

A refined version of that moving lemma is proved and is used as well.
Moreover, {\it all the above mentioned results are direct consequences of the latter
moving lemma}.


\end{abstract}

\section{Main results}\label{Introduction}
One of the main aim of the paper is to prove the following result.
\begin{thm}\label{StrHomInvIntr}
Let $k$ be a field and let $A: SmOp/k \to Gr-Ab$ be a cohomology theory on
the category $SmOp/k$ in the sense of \cite[Sect.1]{PS}. Let
$\mathcal A^n_{Nis}$ be the Nisnevich sheaf associated with the presheaf
$W\mapsto A^n(W)$. Then $\mathcal A^n_{Nis}$ is homotopy invariant
and even it is strictly homotopy invariant on $(Sm/k)_{Nis}$.
\end{thm}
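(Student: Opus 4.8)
The plan is to deduce strict homotopy invariance of $\mathcal{A}^n_{Nis}$ from the refined moving lemma advertised in the abstract, following the strategy that replaces Gabber's presentation lemma by the machinery of nice triples. The first reduction is standard: by general properties of Nisnevich sheaves it suffices to check that for every essentially smooth semi-local $k$-scheme $X$ (the semi-localization of a smooth affine variety at finitely many closed points), the restriction map $\mathcal{A}^n_{Nis}(X)\to \mathcal{A}^n_{Nis}(\Aff^1_X)$ is an isomorphism, and more precisely that the Cousin complex of the presheaf $W\mapsto A^n(W)$ over such $X$ is exact. Indeed, once the Cousin complex is exact over all such $X$, one gets that $\mathcal{A}^n_{Nis}(X)\cong A^n(X)$ (the sheafification does not change sections over semi-local schemes) and that the sheaf cohomology $H^i_{Nis}(X,\mathcal{A}^n_{Nis})$ vanishes for $i>0$; feeding this into the coniveau/Gersten spectral sequence and the localization sequence for $\mathbb A^1_X\supset X\times 0$ yields both homotopy invariance and strict homotopy invariance by the usual Brown–Gersten / Morel-type bookkeeping.

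The heart of the matter is therefore the exactness of the Cousin complex, and this is where the moving lemma enters. Fix $X$ essentially smooth semi-local and a cohomology class supported on a closed subset $Z\subsetneq X$; one wants to move its support away, i.e. to show the class dies after passing to a smaller support or to $X$ itself. The approach is: realize the support condition via the motivic space $X'/(X'-Z')$ for a suitable smooth model $X'$ with $Z'$ its closure, present the semi-local $X$ as a limit, and then invoke the refined moving lemma to produce a naive $\Aff^1$-homotopy between the classifying map $U\to X'/(X'-Z')$ and the constant map to the base point. Because $A$ is a cohomology theory on $SmOp/k$ in the sense of \cite{PS} — in particular it is $\Aff^1$-invariant and has well-behaved supports and excision — a naive $\Aff^1$-homotopy of maps induces equality of the pulled-back classes; hence the class supported on $Z$ is pulled back from the base point, i.e. is zero. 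Running this over the whole poset of closed subsets gives exactness of the Cousin complex at each spot.

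The main obstacle, as the abstract itself flags, is the construction underlying the moving lemma: producing, for the semi-local scheme $U$ of finitely many closed points on a quasi-projective $X$ with closed subset $Z$, an \emph{actual} elementary Nisnevich neighbourhood together with a good projection to $\Aff^1$ over which $Z$ becomes finite and which supplies the homotopy — this is precisely the step that uses nice triples from \cite{PSV}, \cite{P3} as a substitute for Gabber's lemma, and it must be done for an arbitrary ground field $k$ (no infinitude, no perfectness assumptions), which is the delicate part. Everything after the moving lemma — the translation into vanishing of Cousin differentials, the passage from the Cousin complex to strict homotopy invariance of the Nisnevich sheaf, and the specialization to presheaves of $S^1$-spectra — is formal and follows the template in \cite{P3} and \cite{M}, once the inaccuracy in \cite{M} is repaired by the nice-triples input. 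I would therefore organize the paper so that the refined moving lemma is proved first in full generality, and then Theorem~\ref{StrHomInvIntr} (and the other stated consequences) are obtained as corollaries by the above chain of reductions.
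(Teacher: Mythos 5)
Your proposal tracks the paper's own route: reduce Theorem~\ref{StrHomInvIntr} to exactness of the Cousin complex over essentially smooth semi-local $k$-schemes (Theorem~\ref{cousin_1}), obtain that exactness from the extended moving lemma (Theorem~\ref{An extended_moving_lemma}), prove the moving lemma from the geometric presentation theorem (Theorem~\ref{ElementaryNisSquare_1}) built on nice triples, and then finish by the standard flask-resolution / Gersten comparison of Zariski and Nisnevich cohomology (Corollary~\ref{cousin_1_cor}, Corollary~\ref{cousin_4}, Theorems~\ref{StrHomInv} and~\ref{StrHomInv***}). The only cosmetic difference is that where you speak of a coniveau spectral sequence plus the localization sequence for $\Aff^1_X \supset X\times 0$, the paper instead uses the Cousin complex directly as a flask resolution of $\mathcal A^n_{Zar}$ and proves the pull-back $p^*\colon \mathrm{Cous}(X)\to\mathrm{Cous}(\Aff^1\times X)$ is a quasi-isomorphism; these are interchangeable formulations of the same bookkeeping.
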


This theorem is derived in Section \ref{StrHomInvSect} in the standard manner
from the exactness of the Cousin complex associated with
the theory $A$. The exactness of the Cousin complex for the semi-local
essentially $k$-smooth schemes is proved in
Section \ref{cous_complex_section}.
The exactness of the Cousin complex is derived from a moving lemma
(Theorem \ref{An extended_moving_lemma}) stated and proved
in Section \ref{An extended_moving_lemma_sect}.
That moving lemma is easily derived
from Theorem \ref{Major}.
Here is a weaker version of the Theorem \ref{Major},
which shows the shape of Theorem \ref{Major}
(we do not claim that the moving lemma can be derived from
this weaker version).

\begin{thm}[Geometric]\label{MajorIntrod}
Let $X$ be an affine $k$-smooth irreducible $k$-variety, and let $x_1,x_2,\dots,x_n$ be closed points in $X$.
Let $U=Spec(\mathcal O_{X,\{x_1,x_2,\dots,x_n\}})$ and $\textrm{f}\in k[X]$ be
a non-zero function vanishing at each point $x_i$. Then
there is a monic polinomial $h\in O_{X,\{x_1,x_2,\dots,x_n\}}[t]$,
a commutative diagram
of schemes with the irreducible affine $U$-smooth $Y$
\begin{equation}
\label{SquareDiagram2_2}
    \xymatrix{
       (\Aff^1 \times U)_{h}  \ar[d]_{inc} && Y_h:=Y_{\tau^*(h)} \ar[ll]_{\tau_{h}}  \ar[d]^{inc} \ar[rr]^{(p_X)|_{Y_h}} && X_f  \ar[d]_{inc}   &\\
     (\Aff^1 \times U)  && Y  \ar[ll]_{\tau} \ar[rr]^{p_X} && X                                     &\\
    }
\end{equation}
and a morphism $\delta: U \to Y$ subjecting to the following conditions:
\begin{itemize}
\item[\rm{(i)}]
the left hand side square
is an elementary {\bf distinguished} square in the category of affine $U$-smooth schemes in the sense of
\cite[Defn.3.1.3]{MV};
\item[\rm{(ii)}]
$p_X\circ \delta=can: U \to X$, where $can$ is the canonical morphism;
\item[\rm{(iii)}]
$\tau\circ \delta=i_0: U\to \Aff^1 \times U$ is the zero section
of the projection $pr_U: \Aff^1 \times U \to U$;
\item[\rm{(iv)}] $h(1)\in \mathcal O[t]$ is a unit.
\end{itemize}
\end{thm}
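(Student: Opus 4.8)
The plan is to deduce the theorem from the machinery of \emph{nice triples} of \cite{PSV}, refined in \cite{P3}; this is exactly what substitutes for Gabber's presentation lemma and permits an arbitrary $k$. Write $\mathcal O:=\mathcal O_{X,\{x_1,\dots,x_n\}}=\mathcal O(U)$. Since $U$ only depends on an affine open neighbourhood of $\{x_1,\dots,x_n\}$, I would freely replace $X$ by a smaller such neighbourhood whenever convenient (this does not change $U$). Embedding $X$ in an affine space and running the good-projection construction of \cite{PSV},\cite{P3} produces a nice triple subordinate to $(X,\{x_1,\dots,x_n\},f)$, which after the standard augmentation yields: an irreducible affine scheme $\mathcal X$ with a smooth $U$-morphism $q_U\colon\mathcal X\to U$ of relative dimension $1$; a finite surjective $U$-morphism $\Pi=(t,q_U)\colon\mathcal X\to\Aff^1\times U$; a $k$-morphism $q_X\colon\mathcal X\to X$; and a section $\Delta\colon U\to\mathcal X$ of $q_U$; subject to $q_X\circ\Delta=\can$, $t\circ\Delta=0$ (after translating $t$ by a function pulled back from $U$), and $\Pi|_{\mathcal Z}$ a closed immersion, where $\mathcal Z:=\{q_X^{*}f=0\}$ is finite over $U$; note $\Delta(U)\subseteq\mathcal Z$ because $f$ vanishes at every $x_i$. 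Since $\Pi|_{\mathcal Z}$ is a closed immersion, $\Pi$ is unramified along $\mathcal Z$, hence --- $\mathcal X$ and $\Aff^1\times U$ being smooth $U$-curves and miracle flatness applying --- \emph{\'etale} in a Zariski neighbourhood of $\mathcal Z\supseteq\Delta(U)$.

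\textbf{Construction of $h$, $Y$ and $\delta$.} The combination ``$\Pi$ \'etale along $\mathcal Z$'' and ``$\Pi|_{\mathcal Z}$ a closed immersion'' makes $\mathcal Z$ a clopen piece of $\Pi^{-1}(\Pi(\mathcal Z))$, so I may write $\Pi^{-1}(\Pi(\mathcal Z))=\mathcal Z\sqcup\mathcal Z''$ with $\mathcal Z''$ closed and disjoint from $\mathcal Z$. Let $D\subseteq\mathcal X$ be the (closed) ramification locus of $\Pi$: finite over $U$ and disjoint from $\mathcal Z$. Put $C:=\mathcal Z''\cup D$, a closed set finite over $U$ and disjoint from $\mathcal Z\ (\supseteq\Delta(U))$; since $\mathcal X$ is affine and $C,\mathcal Z$ are finite over the semi-local $U$, choose $g\in\mathcal O(\mathcal X)$ vanishing on $C$ and invertible on $\mathcal Z$, and set $Y:=\mathcal X_g$ --- an irreducible affine $U$-smooth scheme containing $\mathcal Z\supseteq\Delta(U)$ and missing $C$. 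Being an effective Cartier divisor in the smooth $U$-curve $\mathcal X$ and quasi-finite over $U$, $\mathcal Z$ is finite and flat over $U$, so $\mathcal O(\mathcal Z)$ is a free $\mathcal O$-module; let $h\in\mathcal O[t]$ be the characteristic polynomial of multiplication by $t$ on it. Then $h$ is monic, $(h)=\ker\bigl(\mathcal O[t]\twoheadrightarrow\mathcal O(\mathcal Z)\bigr)$ (a surjection of free $\mathcal O$-modules of equal rank is bijective over the semi-local $\mathcal O$), so $V(h)=\Pi(\mathcal Z)$ scheme-theoretically and $\Pi$ restricts to an isomorphism $\mathcal Z\xrightarrow{\ \sim\ }V(h)$. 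Moreover $t$ may be chosen so as not to take the value $1$ at any point of $\mathcal Z$ lying over $x_1,\dots,x_n$ (a generic condition over an infinite field, supplied by the nice-triple construction in general), which makes $h(1)\in\mathcal O$ a unit. Finally set $\tau:=\Pi|_Y$, $Y_h:=Y_{\tau^{*}h}$ and $\delta:=\Delta$.

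\textbf{Verification of (i)--(iv).} The morphism $\tau$ is \'etale because $Y\cap D=\emptyset$; $Y_h=Y\times_{\Aff^1\times U}(\Aff^1\times U)_h$ by definition; and $\tau^{-1}(V(h))\cap Y=(\mathcal Z\sqcup\mathcal Z'')\cap\mathcal X_g=\mathcal Z$ since $g$ is invertible on $\mathcal Z$ and vanishes on $\mathcal Z''\subseteq C$. Hence $Y\setminus Y_h=\mathcal Z$ and $\tau$ carries it isomorphically onto $(\Aff^1\times U)\setminus(\Aff^1\times U)_h=V(h)$, so the left square of \eqref{SquareDiagram2_2} is an elementary distinguished square in the category of affine $U$-smooth schemes, which is (i). For (ii) and (iii): $p_X\circ\delta=q_X\circ\Delta=\can$ and $\tau\circ\delta=\Pi\circ\Delta=(t\circ\Delta,\,q_U\circ\Delta)=i_0$. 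For the top row, $\{q_X^{*}f=0\}\cap Y=\mathcal Z=Y\setminus Y_h$, so $q_X^{*}f$ is invertible on $Y_h$; thus $(p_X)|_{Y_h}$ lands in $X_f\hookrightarrow X$ and the diagram \eqref{SquareDiagram2_2} commutes with its vertical inclusions. Condition (iv) was built into the choice of $h$; it is precisely what guarantees, in the derivation of the moving lemma, that the $t=1$ slice of the naive $\Aff^1$-homotopy afforded by $Y$ is carried into the ``basepoint'' locus $X_f$.

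\textbf{Main obstacle.} Everything of substance is in the first paragraph: constructing the nice triple and its augmentation so that the distinguished coordinate $t$ is simultaneously a finite function on $\mathcal X$, an \'etale coordinate along $\Delta(U)$ with $\Pi|_{\mathcal Z}$ a closed immersion, and free of the value $1$ on $\mathcal Z$ over the $x_i$. Over an infinite field these are classical general-position arguments in the style of Quillen's and Ojanguren--Panin's geometric presentations; the delicate point, which forces the use of \cite{PSV},\cite{P3} in place of Gabber's lemma, is to carry them out over an \emph{arbitrary} field, finite fields included, where ``generic hyperplane'' reasoning is unavailable. Granting the nice triple, the remaining two paragraphs are a routine sheet-separation over a semi-local base followed by bookkeeping.
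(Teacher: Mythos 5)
Your proposal reproduces the overall shape of the paper's argument (produce a nice triple over $U$ with a finite $U$\nobreakdash-morphism to $\Aff^1\times U$, then read off the elementary distinguished square), but it hides a logical gap at the one place where the theorem is actually hard, and it contains an implication that is simply false.

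First, the false implication. You write that since $\Pi|_{\mathcal Z}$ is a closed immersion, $\Pi$ is unramified along $\mathcal Z$. This does not follow. Take $\mathcal X=\Aff^1_k$, $\Pi(t)=t^2$, $\mathcal Z=\{0\}$ with reduced structure: $\Pi|_{\mathcal Z}$ is a closed immersion, yet $\Pi$ ramifies at $0$. In general, $\Pi|_{\mathcal Z}$ being a closed immersion gives $\mathfrak m_y\,\mathcal O_{\mathcal X,z}+(f')=\mathfrak m_z$ at $z\in\mathcal Z$, which is weaker than $\mathfrak m_y\,\mathcal O_{\mathcal X,z}=\mathfrak m_z$ unless you also know $f'$ differs by a unit from an element of $\mathfrak m_y\,\mathcal O_{\mathcal X,z}$. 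In the paper this extra information is the scheme-theoretic decomposition $\sigma^{-1}(\sigma(\mathcal Z'))=\mathcal Z'\sqcup\mathcal Z''$ (item (c) of Theorem~\ref{ElementaryNisSquare_1_triples}); but you derive that decomposition \emph{from} \'etaleness. The paper breaks the circle by proving \'etaleness at $z\in\mathcal Z'_u$ directly: it constructs $\sigma$ so that $\sigma_u$ restricts to a closed embedding already on the \emph{second infinitesimal neighbourhood} $z^{(2)}$ of each point of $\mathcal Z'_u$ (Step (iv)(b) in Section~\ref{SectElemNisnevichSquare}); that is strictly more than ``closed immersion on $\mathcal Z$'' and it is what actually yields unramification. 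A closed immersion on a possibly reduced $\mathcal Z_u$ says nothing about $z^{(2)}$.

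Second, the key input is not a ``standard augmentation'' of the nice-triple machinery. Getting a finite surjective $U$\nobreakdash-morphism to $\Aff^1\times U$ that is a closed embedding on $\mathcal Z$ is genuinely impossible for a random nice triple over a finite field: if $k=\mathbb F_2$, $U$ has a $k$\nobreakdash-rational closed point $u$, and $\mathcal Z_u$ has three $k$\nobreakdash-rational points, there is simply no closed embedding $\mathcal Z_u\hookrightarrow\Aff^1_u$ (the paper notes exactly this before Theorem~\ref{ElementaryNisSquare_1}). Fixing this requires first replacing the triple by a new one $(\mathcal X',f',\Delta')$ along an \'etale cover controlled by Lemma~\ref{SmallAmountOfPoints} so that the degree distribution of closed fibres of $\mathcal Z'$ is dominated by that of $\Aff^1$ (conditions $(1^*)$, $(2^*)$ of Definition~\ref{Conditions_1*and2*}) --- this is Theorem~\ref{ThEquatingGroups_1} --- and then, separately, constructing a \emph{new} finite surjective morphism $\sigma$ (not the original $\Pi$) by the line-bundle argument in the proof of Theorem~\ref{ElementaryNisSquare_1_triples}. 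Your appeal to a ``generic condition over an infinite field, supplied by the nice-triple construction in general'' for the $t=1$ avoidance likewise conceals the use of condition $(2^*)$: item (e) is proved by the argument that any $z\in\mathcal D_1\cap\mathcal Z'$ would have to be a $k(u)$\nobreakdash-rational point of $\mathcal Z'_u$, hence equal to $\Delta'(u)$, hence map to $\{0\}\times U$, a contradiction --- no genericity, just the uniqueness built into $(2^*)$. In short, the two paragraphs you call ``routine bookkeeping'' are indeed routine, but the ``granting the nice triple'' clause is doing all the work of Sections~\ref{OjPanConstruction}--\ref{Proof_of Theorem equating3_triples}, and the one step you do spell out (closed immersion $\Rightarrow$ unramified) is not valid.
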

Theorem \ref{Major} is derived from Theorem
\ref{ElementaryNisSquare_1}. Theorem \ref{ElementaryNisSquare_1}
is purely geometric one. Its proof is based on the
theory of nice triples developed in
Sections \ref{ElementaryFibrations}--\ref{Proof_of Theorem equating3_triples}.
We especially stress the significance of Section
\ref{OjPanConstruction}, which contains a construction
of new nice triples and morphisms between them out of certain simple data.
{\it That allows to construct often nice triples with predicted properties}.

The article is organized as follows.
Section \ref{ElementaryFibrations} contains a recollection
about elementary fibrations. In Section \ref{NiceTriples}
we recall definition of nice triples from \cite{PSV}
and inspired by Voevodsky notion of perfect triples.
We state in that section two theorems:
\ref{ElementaryNisSquare_1_triples}
and
\ref{ThEquatingGroups_1}.
In Section \ref{OjPanConstruction} we recall a construction from \cite{OP2}.
It turns out that the construction
plays a crucial role in our analyses.
That construction together with
Proposition \ref {prop:refining_triples}
gives a tool to obtain nice triple with
prescribed properties.
In Section \ref{SectElemNisnevichSquare}
Theorem \ref{ElementaryNisSquare_1_triples} is proved.
In Section \ref{Proof_of Theorem equating3_triples}
Theorem \ref{ThEquatingGroups_1} is proved.
In Section \ref{Reducing Theorem_MainHomotopy} a geometric presentaion
theorem (Theorem \ref{ElementaryNisSquare_1}) is proved
{\it and a stronger version of Theorem \ref{MajorIntrod} is proved too
}
(see Theorem \ref{Major}).

The Appendix A contains the proof of Lemma
\ref{SmallAmountOfPoints}.
The Appendix B contains the proof of the proposition \ref{ArtinsNeighbor}.\\

The author thanks A.Asok, J.Fasel and M.Hoyois for initiation his attention to the topic of
the present paper (especially over finite fields).
The author also thanks A.Stavrova for paying his attention to Poonen's works on Bertini type theorems
for varieties over finite fields. He thanks D.Orlov for useful comments concerning
the weighted projective spaces tacitely involved in the construction of elementary fibrations.
He thanks M.Ojanguren for many inspiring ideas arising from our joint works with him.
The author thanks A.Ananyevskiy, G.Garkusha and A.Neshitov for asking many interesting questions
concerning the topic of the paper.

\section{Elementary fibrations}
\label{ElementaryFibrations}

In this Section we extend a result of M. Artin from~\cite{LNM305}
concerning existence of nice neighborhoods.
The following notion is
introduced by Artin in~\cite[Exp. XI, D\'ef. 3.1]{LNM305}.
\begin{defn}
\label{DefnElemFib} An elementary fibration is a morphism of
schemes $p:X\to S$ which can be included in a commutative diagram
\begin{equation}
\label{SquareDiagram}
    \xymatrix{
     X\ar[drr]_{p}\ar[rr]^{j}&&
\overline X\ar[d]_{\overline p}&&Y\ar[ll]_{i}\ar[lld]_{q} &\\
     && S  &\\    }
\end{equation}
of morphisms satisfying the following conditions:
\begin{itemize}
\item[{\rm(i)}] $j$ is an open immersion dense at each fibre of
$\overline p$, and $X=\overline X-Y$; \item[{\rm(ii)}] $\overline
p$ is smooth projective all of whose fibres are geometrically
irreducible of dimension one; \item[{\rm(iii)}] $q$ is finite
\'{e}tale all of whose fibres are non-empty.
\end{itemize}
\end{defn}

\begin{rem}
\label{ElementraryAndAlmostElementary}
Clearly, an elementary fibration is an almost elementary fibration
in the sense of \cite[Defn.2.1]{PSV}.
\end{rem}

We need in the following result, which is a
slight extension of Artin's result \cite[Exp. XI,Prop. 3.3]{LNM305}.
It is proved in the Appendix B.

\begin{prop}
\label{ArtinsNeighbor} Let $k$ be {\bf a finite field}, $X$ be a smooth
{\bf geometrically} irreducible {\bf affine} variety over $k$,
$x_1,x_2,\dots,x_n\in X$ be a family of {\bf closed} points. Then there exists a
Zariski open neighborhood $X^0$ of the family
$\{x_1,x_2,\dots,x_n\}$ and {\bf an elementary fibration} $p:X^0\to
S$, where $S$ is an open sub-scheme of the projective space
$\Pro^{\mydim X-1}$.
\par
If, moreover, $Z$ is a closed co-dimension one subvariety in $X$,
then one can choose $X^0$ and $p$ in such a way that $p|_{Z\bigcap
X^0}:Z\bigcap X^0\to S$ is finite surjective.
\end{prop}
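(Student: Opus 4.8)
The plan is to reduce Proposition \ref{ArtinsNeighbor} over a finite field to the classical statement of Artin \cite[Exp.~XI, Prop.~3.3]{LNM305}, whose proof works over an infinite field but requires a sufficiently general hyperplane section, something one cannot always find over a small finite field. First I would recall the structure of Artin's argument: embed $X$ (or rather its projective closure $\overline X$ in some $\P^N$, after a Veronese re-embedding) and then project from a suitable linear center to cut down the dimension one step at a time, arranging that at the last step one obtains a smooth projective curve fibration whose boundary divisor $Y$ is finite étale over the base $S\subset \P^{\dim X-1}$, with all fibres nonempty, and such that $X^0 = \overline X^0 - Y$ contains the chosen points $x_1,\dots,x_n$. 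The only place where infinitude of the ground field is used is in the repeated application of a Bertini/general-position argument: one needs the projection center to avoid finitely many bad loci and to induce the required smoothness and étaleness along the fibre over the (finitely many) points in question, as well as finiteness of $Z\cap X^0$ over $S$.

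The key step is therefore to replace the classical Bertini argument by a Bertini-over-finite-fields theorem. Here I would invoke Poonen's Bertini theorem for varieties over finite fields (and its variants with prescribed behaviour along a finite closed subscheme — the ``semi-local'' Bertini statements), which is exactly the tool the paper's acknowledgements point to. Concretely: given the finite set $\{x_1,\dots,x_n\}$, one prescribes the restriction of the defining equation of the generic hyperplane along the closed subscheme $\coprod x_i$ (so that the hyperplane misses the points, or meets them transversally, as needed), and Poonen's theorem guarantees that among hypersurfaces of sufficiently high degree $d$ one can find one that is simultaneously transverse to $\overline X$, induces the desired behaviour at the $x_i$, and — in the second part of the proposition — meets $Z$ transversally so that the resulting projection restricts to a finite surjective morphism on $Z\cap X^0$. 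Passing to a high-degree Veronese embedding turns ``hypersurface sections'' into ``hyperplane sections'', which is what Artin's inductive construction of an elementary fibration consumes.

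With this substitution in hand, the remainder of the proof is a careful but essentially bookkeeping reproduction of Artin's induction: one performs $\dim X - 1$ successive generic projections, each time shrinking $X$ to a Zariski-open neighbourhood $X^0$ of $\{x_1,\dots,x_n\}$ over which the projection has the properties demanded in Definition \ref{DefnElemFib}, namely $\overline p$ smooth projective with geometrically irreducible one-dimensional fibres, $q$ finite étale with nonempty fibres, and $X = \overline X - Y$. One must also check that geometric irreducibility of the fibres is preserved — this follows from the classical fact that a general linear section of a geometrically irreducible projective variety of dimension $\geq 1$ is again geometrically irreducible, whose char-free / finite-field version is again available via Poonen-type arguments or via spreading out. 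Finally, for the last sentence of the statement, one keeps track of $Z$ through all the projections, arranging at each stage (again by the prescribed-degree Bertini theorem) that $Z\cap X^0$ maps finitely and surjectively onto $S$, which is possible since $Z$ has codimension one, hence the same dimension as $S$.

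The main obstacle I expect is precisely the coordination of the prescribed local conditions at $\{x_1,\dots,x_n\}$ (and along $Z$) with the global transversality conditions needed for Artin's construction, over a field that may be as small as $\mathbb F_2$: one has to be sure that the finitely many constraints imposed at the closed points are compatible with, and do not exhaust, the supply of good hypersurfaces produced by Poonen's theorem. This is exactly what the semi-local refinement of Bertini over finite fields delivers, so the proof is really an application of that result rather than a new idea; the bulk of the write-up in Appendix B will be verifying that Artin's inductive bookkeeping goes through verbatim once the single ``general hyperplane'' input is upgraded to the finite-field version.
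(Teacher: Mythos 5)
You have correctly identified the external input the paper needs: Poonen's Bertini theorem over finite fields with prescribed behaviour along a finite closed subscheme, together with the Charles--Poonen geometric irreducibility theorem. This is exactly what Proposition~\ref{Bertini_for_geom_connected} in Appendix~B packages up, and the reduction of the problem to these two theorems is the right strategic move.

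Where your proposal has a genuine gap is the claim that, once classical Bertini is upgraded, Artin's inductive construction ``goes through verbatim.'' It does not, and the paper says so explicitly: \emph{``we do not have in hand at the moment any kind of theorem about existence of a $d$ appropriate family of sections of the bundle $\mathcal O(d)$.''} Poonen's theorem hands you one good hypersurface section at a time, of some sufficiently large degree. It does not hand you a linear pencil of good sections of a fixed degree, nor a good linear center of projection. So after your Veronese re-embedding you would still have only a single good hyperplane, not the one-parameter linear family that ``project from a suitable linear center'' requires in order to produce a morphism $\overline X^0 \to S \subset \Pro^{\dim X-1}$ with curve fibres. This is not a bookkeeping issue; it is the crux of the adaptation to finite fields.

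The paper's workaround is a \emph{weighted} pencil. It produces, via two applications of Proposition~\ref{Bertini_for_geom_connected}, hypersurfaces $f_1$ of degree $d_1$ (not vanishing at the $x_i$) and $f_2$ of degree $d_1 d_2$ (vanishing at the $x_i$), with a list of transversality conditions, including that $X_1 = H(f_1)\cap\bar X$ and $X_2 = H(f_2)\cap\bar X$ are smooth geometrically irreducible curves meeting transversally away from the boundary $Y$. It then forms the incidence variety $\bar X' \subset \bar X\times\Pro^1$ cut out by $t_2 f_1^{d_2} - t_1 f_2 = 0$, so that $f_1^{d_2}$ and $f_2$ have the same degree and define a genuine pencil; the elementary fibration is extracted from the second projection $\bar X' \to \Pro^1$ after several auxiliary lemmas (that $\sigma\colon\bar X'\to\bar X$ is an isomorphism over $\bar X_{f_1}$, that $Y_{f_1}$ is finite étale over a neighbourhood $S$ of $0\in\Aff^1$, that the fibre over $0$ is the smooth geometrically irreducible curve $X_2$, etc.). Your proposal does not address how to produce a pencil at all, and ``coordination of local constraints with global transversality'' (which you flag as the main obstacle) is comparatively routine once the pencil exists. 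You should insert the weighted-pencil construction as the central step of the write-up, and note that the paper then contents itself with carrying this out for surfaces and leaving the higher-dimensional bookkeeping to the reader.
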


The following result is proved in
\cite[Prop.2.4]{PSV}.

\begin{prop}
\label{CartesianDiagram} Let $p: X \to S$ be an elementary
fibration. If $S$ is a regular semi-local irreducible scheme, then
there exists a commutative diagram of $S$-schemes
\begin{equation}
\label{RactangelDiagram}
    \xymatrix{
X\ar[rr]^{j}\ar[d]_{\pi}&&\overline X\ar[d]^{\overline \pi}&&
Y\ar[ll]_{i}\ar[d]_{}&\\
\Aff^1\times S\ar[rr]^{\text{\rm in}}&&\Pro^1\times S&&
\ar[ll]_{i}\{\infty\}\times S &\\  }
\end{equation}
\noindent such that the left hand side square is Cartesian. Here $j$
and $i$ are the same as in Definition \ref{DefnElemFib}, while
$\pr_S \circ\pi=p$, where $\pr_S$ is the projection $\Aff^1\times
S\to S$.

In particular, $\pi:X\to\Aff^1\times S$ is a finite surjective
morphism of $S$-schemes, where $X$ and $\Aff^1\times S$ are regarded
as $S$-schemes via the morphism $p$ and the projection $\pr_S$,
respectively.
\end{prop}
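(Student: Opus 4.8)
The plan is to reduce Proposition \ref{CartesianDiagram} to the concrete geometry of curves over the semi-local base $S$, exploiting that $\overline p:\overline X\to S$ is a smooth projective family of geometrically irreducible curves and that the complement $Y=\overline X - X$ is finite \'etale over $S$. Since $S$ is regular semi-local irreducible, every finite locally free $\mathcal O_S$-module is free; in particular $\mathrm{Pic}(S)$ is trivial and every line bundle on $S$-schemes can be analyzed through its sections. First I would choose an auxiliary relative very ample line bundle on $\overline X/S$ and use the vanishing of higher cohomology on the semi-local base together with the semicontinuity/base-change theorems to produce enough global sections; the aim is to find a section $D$ of $\overline X\to S$, disjoint from $Y$, playing the role of $\{\infty\}\times S$, and to find a relative degree-one-ish rational function realizing $\overline X$ as a finite cover of $\P^1\times S$.

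The key technical step is the construction of a finite morphism $\overline\pi:\overline X\to \P^1\times S$ of $S$-schemes such that $\overline\pi^{-1}(\{\infty\}\times S)=Y$ as closed subschemes (or at least set-theoretically, then one adjusts). Concretely I would proceed as follows. Since $Y\to S$ is finite \'etale, $Y=\mathrm{Spec}$ of a finite \'etale $\mathcal O_S$-algebra, hence (after possibly enlarging nothing, because $S$ is semi-local) $Y$ is cut out on $\overline X$ by a relative effective Cartier divisor. Twisting a suitable power of an ample bundle by $\mathcal O(-Y)$ and using that on the semi-local $S$ the relevant $\mathrm H^1$ vanishes, I obtain a function $g$ on $\overline X$ that is regular on $X=\overline X-Y$, has a pole exactly along $Y$, and whose fibrewise degree (= degree of $Y$ in that fibre) is constant; this $g:X\to \Aff^1$ together with $p:X\to S$ gives $\pi=(g,p):X\to \Aff^1\times S$, and it extends to $\overline\pi:\overline X\to \P^1\times S$ sending $Y$ to $\{\infty\}\times S$. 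Because $\overline X\to S$ and $\P^1\times S\to S$ are both proper with one-dimensional geometrically irreducible fibres and $\overline\pi$ is quasi-finite on the generic fibre (a non-constant map of projective curves), $\overline\pi$ is quasi-finite, hence finite, and then $\pi$ is finite surjective as a morphism of $S$-schemes. Cartesianness of the left square is then the statement $X=\overline\pi^{-1}(\Aff^1\times S)$, which holds because $\overline\pi^{-1}(\{\infty\}\times S)=Y$ set-theoretically and $X=\overline X - Y$; one checks the scheme structure matches because $Y$ is reduced along the fibres and $\overline\pi$ restricted there is flat.

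The main obstacle I anticipate is producing the function $g$ with the precise pole divisor $Y$ rather than merely a divisor supported on (some multiple of) $Y$: this requires that the complete linear system attached to $\mathcal O_{\overline X}(Y)$ (or $\mathcal O_{\overline X}(Y)\otimes\overline p^*\mathcal L$ for an auxiliary ample $\mathcal L$ on $S$) has a section not vanishing identically on any fibre and, more delicately, separating $Y$ from the rest. For this I would invoke Riemann--Roch on the fibres to get the numerical dimension count, then semicontinuity plus the vanishing of $\mathrm H^1$ over the semi-local base (base change for $\overline p_*$) to lift fibrewise sections to a global section; the semi-locality of $S$ is exactly what makes the obstruction group vanish and also what lets one avoid Bertini-type genericity arguments over small fields. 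A secondary subtlety is the $S$-scheme compatibility: one must arrange $\overline\pi$ to commute with the projections to $S$, which is automatic once $g$ is built as an $\mathcal O_S$-algebra map and $\overline\pi$ is defined as $(g, \overline p)$. I would then record the ``in particular'' clause as an immediate consequence: a morphism of $S$-schemes between the total space of a proper flat family of geometrically irreducible curves and $\Aff^1\times S$ that is quasi-finite on the generic fibre and surjective there is, by properness of $\overline p$ and the valuative criterion, finite and surjective.

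Finally, I would remark that the same statement for a regular semi-local \emph{irreducible} $S$ follows from the case of a field (the generic point) by a standard spreading-out argument: construct the diagram over the function field of $S$, then spread it out to a nonempty open of $S$, and since $S$ is semi-local with all closed points specializing into that open, the diagram is in fact defined over all of $S$; flatness, finiteness and Cartesianness are open conditions on the base and hence persist. Either route — direct construction via linear systems on the semi-local base, or spreading-out from the generic point — yields \eqref{RactangelDiagram}; I would favor the spreading-out route for brevity and the linear-systems route when one needs explicit control of the fibrewise degree of $\pi$.
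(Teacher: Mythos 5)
Your direct route — extracting a rational function on $\overline X$ with poles along $Y$ from the linear system $|\mathcal O_{\overline X}(nY)|$ for $n\gg 0$, using that $R^1\overline p_*$ and $H^1(S,-)$ vanish over the affine semi-local base — is in substance the argument of \cite[Prop.~2.4]{PSV}, which is what the paper cites in lieu of a proof. Two corrections to it. First, you misidentify the ``main obstacle'': only the \emph{left} square of (\ref{RactangelDiagram}) is required to be Cartesian, i.e.\ $\overline\pi^{-1}(\Aff^1\times S)=X$; a function whose pole divisor is any positive multiple $nY$ serves, so there is no need to arrange a pole of exact order one along $Y$. Second, the genuinely delicate step — choosing a global section of $\mathcal O_{\overline X}(nY)$ that vanishes nowhere on $Y$ — should not be done by a fibrewise genericity/Bertini count (that can fail when a residue field $k(s)$ is small and $Y_s$ has many points). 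The correct argument is that $Y$ is semi-local, so $\mathcal O_{\overline X}(nY)|_Y$ is free, and for $n\gg 0$ the restriction $H^0(\overline X,\mathcal O(nY))\to H^0(Y,\mathcal O_Y)$ is surjective, by the short exact sequence $0\to \mathcal O((n-1)Y)\to\mathcal O(nY)\to\mathcal O(nY)|_Y\to 0$ together with the $H^1$-vanishing you already invoke; then one simply lifts $1\in H^0(Y,\mathcal O_Y)$.

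The alternative route you say you would favor — spread out from the generic point of $S$ — does not work, and this is a genuine gap. Spreading out produces the diagram over some nonempty open $V\subset S$, but an open subset of a semi-local irreducible scheme containing the generic point need not contain all the closed points: if $R$ has maximal ideals $\mathfrak m_1,\mathfrak m_2$ and $f\in\mathfrak m_1\setminus\mathfrak m_2$, then $D(f)$ contains the generic point and $\mathfrak m_2$ but not $\mathfrak m_1$. Your phrase ``all closed points specializing into that open'' is inverted (it is the generic point that specializes to the closed points), and once corrected the implication you want is simply false. Openness of flatness, finiteness and the Cartesian condition only says the good locus is open; it does not make it all of $S$. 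This Proposition really does require a construction that is global over $S$ and uses semi-locality at the closed points (as in the direct route); it does not reduce to the generic fibre.
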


\section{Nice triples}
\label{NiceTriples}

In the present section we recall and study certain collections
of geometric data and their morphisms. The concept of a {\it nice
triple\/} was introduced in
\cite[Defn. 3.1]{PSV}
and is very similar to that of a {\it standard triple\/}
introduced by Voevodsky \cite[Defn.~4.1]{V}, and was in fact
inspired by the latter notion. Let $k$ be a field, let $X$
be a $k$-smooth
irreducible {\bf affine} $k$-variety, and let
$x_1,x_2,\dots,x_n\in X$ be {\bf a family of closed points}. Further, let
$\mathcal O=\mathcal O_{X,\{x_1,x_2,\dots,x_n\}}$ be the
corresponding geometric semi-local ring.

After substituting $k$ by its algebraic closure $\tilde k$ in $k[X]$,
we can assume that $X$ is a $\tilde k$-smooth {\it geometrically irreducible} affine $\tilde k$-scheme.
{\bf The geometric irreducibility of $X$ is required in the proposition \ref{ArtinsNeighbor}
to construct an open neighborhood $X^0$ of the family
$\{x_1,x_2,\dots,x_n\}$ and {\bf an elementary fibration} $p:X^0\to S$,
where $S$ is an open sub-scheme of the projective space
$\Pro^{\mydim X-1}_k$. The proposition \ref{ArtinsNeighbor}
is used in turn to prove the proposition \ref{BasicTripleProp_1}.
To simplify the notation, we will
continue to denote this new $\tilde k$ by $k$.
\begin{defn}
\label{DefnNiceTriple} Let $U:=\text{\Spec}(\mathcal
O_{X,\{x_1,x_2,\dots,x_n\}})$.
A \emph{nice triple} over $U$ consists of the following data:
\begin{itemize}
\item[\rm{(i)}] a smooth morphism $q_U:\mathcal X\to U$, where $\mathcal X$ is an irreducible scheme,
\item[\rm{(ii)}] an element $f\in\Gamma(\mathcal X,\mathcal
O_{\mathcal X})$,
\item[\rm{(iii)}] a section $\Delta$ of the morphism $q_U$,
\end{itemize}
subject to the following conditions:
\begin{itemize}
\item[\rm{(a)}]
each irreducible component of each fibre of the morphism $q_U$
has dimension one,
\item[\rm{(b)}]
the module
$\Gamma(\mathcal X,\mathcal O_{\mathcal X})/f\cdot\Gamma(\mathcal X,\mathcal O_{\mathcal X})$
is finite as
a $\Gamma(U,\mathcal O_{U})=\mathcal O$-module,
\item[\rm{(c)}]
there exists a finite surjective $U$-morphism
$\Pi:\mathcal X\to\Aff^1\times U$;
\item[\rm{(d)}]
$\Delta^*(f)\neq 0\in\Gamma(U,\mathcal O_{U})$.
\end{itemize}
There are many choices of the morphism $\Pi$. Any of them is regarded as assigned to
the nice triple.
\end{defn}

\begin{rem}\label{rem: NiceTriples}
Since $\Pi$ is a finite morphism, the scheme $\mathcal X$ is affine.
We will write often below $k[\mathcal X]$ for $\Gamma(\mathcal X,\mathcal O_{\mathcal X})$.
The only requirement on the morphism $\Delta$ is this: $\Delta$ is a section of $q_U$.
Hence $\Delta$ is a closed embedding. We write $\Delta(U)$ for the image of this closed embedding.
The composite map $\Delta^* \circ q^*_U: k[\mathcal X]\to \mathcal O$ is the identity.
If $Ker=Ker(\Delta^*)$, then $Ker$ is the ideal defining the closed subscheme $\Delta(U)$ in $\mathcal X$.
\end{rem}

\begin{defn}
\label{DefnMorphismNiceTriple}
A \emph{morphism}
between two nice
triples
over $U$
$$(q^{\prime}: \mathcal X^{\prime} \to U,f^{\prime},\Delta^{\prime})\to(q: \mathcal X \to U,f,\Delta)$$
is an \'{e}tale morphism of $U$-schemes $\theta:\mathcal
X^{\prime}\to\mathcal X$ such that
\begin{itemize}
\item[\rm{(1)}] $q^{\prime}_U=q_U\circ\theta$, \item[\rm{(2)}]
$f^{\prime}=\theta^{*}(f)\cdot h^{\prime}$ for an element
$h^{\prime}\in\Gamma(\mathcal X^{\prime},\mathcal O_{\mathcal
X^{\prime}})$,
\item[\rm{(3)}] $\Delta=\theta\circ\Delta^{\prime}$.
\end{itemize}
\end{defn}
Two observations are in order here.
\par\smallskip
$\bullet$ Item (2) implies in particular that $\Gamma(\mathcal
X^{\prime},\mathcal O_{\mathcal
X^{\prime}})/\theta^*(f)\cdot\Gamma(\mathcal X^{\prime},\mathcal
O_{\mathcal X^{\prime}})$ is a finite
$\mathcal O$-module.
\par\smallskip
$\bullet$ It should be emphasized that no conditions are imposed
on the interrelation of $\Pi^{\prime}$ and $\Pi$.
\par\smallskip

}

Let $U$ be as in Definition \ref{DefnNiceTriple} and
$\can:U\hra X$ be the canonical inclusion of schemes.
\begin{defn}
\label{SpecialNiceTriples}
A nice triple
$(q_U: \mathcal X \to U, \Delta, f)$ over $U$
is called special if
the set of closed points of $\Delta(U)$ is
contained in the set of closed points of $\{f=0\}$.
\end{defn}

\begin{rem}
\label{NiceToSpecialNice}
Clearly the following holds:
let $(\mathcal X,f,\Delta)$ be a special nice triple
over $U$ and let
$\theta: (\mathcal X^{\prime},f^{\prime},\Delta^{\prime}) \to (\mathcal X,f,\Delta)$
be a morphism between nice triples over $U$. Then the triple
$(\mathcal X^{\prime},f^{\prime},\Delta^{\prime})$
is a special nice triple over $U$.
\end{rem}

\begin{prop}
\label{BasicTripleProp_1}
One can shrink $X$ such that $x_1,x_2, \dots , x_n$ are still in $X$ and $X$ is affine, and then construct a special nice triple
$(q_U: \mathcal X \to U, \Delta, f)$ over $U$ and an essentially smooth morphism $q_X: \mathcal X \to X$ such that
$q_X \circ \Delta= can$, $f=q^*_X(\text{f} \ )$.
\end{prop}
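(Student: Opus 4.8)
The plan is to build the nice triple by running the machinery of elementary fibrations from Section~\ref{ElementaryFibrations} and then massaging the resulting geometric data into the shape demanded by Definition~\ref{DefnNiceTriple} and Definition~\ref{SpecialNiceTriples}. First I would apply Proposition~\ref{ArtinsNeighbor} (in its finite-field case; in the infinite-field case one quotes the classical Artin result) to the geometrically irreducible affine smooth variety $X$ and the family of closed points $x_1,\dots,x_n$, taking the closed subset $Z$ there to be (a codimension-one subvariety containing) the divisor $\{\text{f}=0\}$ if it is already of pure codimension one, or else shrinking $X$ first so that $\{\text{f}=0\}$ is a nonempty proper closed subset and then arranging it to be finite over the base. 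This produces a Zariski-open neighborhood $X^0 \ni x_1,\dots,x_n$ together with an elementary fibration $p\colon X^0 \to S$ with $S$ open in a projective space, and with the zero locus of $\text{f}$ on $X^0$ finite surjective over $S$. Replacing $X$ by this $X^0$ is the ``shrink $X$'' of the statement.

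Next, since $U = \Spec(\mathcal O_{X,\{x_1,\dots,x_n\}})$ is a regular semi-local irreducible scheme mapping to $S$ (via the canonical map $U \to X^0 \xrightarrow{p} S$, noting that the image consists of finitely many points so $U$ genuinely lives over a semi-local subscheme of $S$), I would form the base change $\mathcal X := X^0 \times_S U$ and let $q_U\colon \mathcal X \to U$ be the second projection and $q_X\colon \mathcal X \to X^0 \hookrightarrow X$ the first projection composed with the open immersion; $q_X$ is essentially smooth because $p$ is smooth and $U$ is essentially smooth over $k$. The section $\Delta\colon U \to \mathcal X$ is the one induced by $\can\colon U \to X^0$ and $\id_U$, so that $q_X \circ \Delta = \can$ automatically, and I set $f := q_X^*(\text{f}\,)$. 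The properties (i), (a) are immediate from the definition of elementary fibration (fibres of $\overline p$, hence of $p$, are curves), and Proposition~\ref{CartesianDiagram} applied to the elementary fibration $p$ with semi-local base produces precisely the finite surjective $U$-morphism $\Pi\colon \mathcal X \to \Aff^1 \times U$ required in (c); condition (b), finiteness of $k[\mathcal X]/f$ over $\mathcal O$, follows because $\{\text{f}=0\}$ is finite over $S$, hence its base change to $U$ is finite over $U$. Condition (d), $\Delta^*(f) \neq 0$, holds because $\text{f}$ is a nonzero function on the irreducible $X$ and $\Delta^*(f) = \can^*(\text{f})$, which is nonzero in the domain $\mathcal O$; and specialness of the triple is exactly the hypothesis that $\text{f}$ vanishes at each $x_i$, so the closed points of $\Delta(U)$, which map to $x_1,\dots,x_n$, lie in $\{f=0\}$.

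The main obstacle, and the place requiring genuine care rather than bookkeeping, is arranging the divisor $\{\text{f}=0\}$ to be simultaneously (a) of pure codimension one (or at least contained in something that is, after shrinking, while still containing all the $x_i$), and (b) \emph{finite surjective} over the base $S$ of the elementary fibration --- this is what makes condition (b) of Definition~\ref{DefnNiceTriple} work. One must choose the elementary fibration \emph{after} knowing $Z \supseteq \{\text{f}=0\}$, invoking the ``moreover'' clause of Proposition~\ref{ArtinsNeighbor}; but that clause needs $Z$ to be of codimension one, so if $\text{f}$ has a non-reduced or lower-dimensional behavior one must first shrink $X$ around the $x_i$ to replace $\{\text{f}=0\}$ by a pure codimension-one closed subset still passing through all the points. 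After that, checking the Cartesian-square compatibility so that $\Pi$ restricts correctly and that all the base changes preserve irreducibility of $\mathcal X$ (using that $p$ has geometrically irreducible fibres and $U$ is irreducible) is routine. The verification $q_X \circ \Delta = \can$ and $f = q_X^*(\text{f}\,)$ is then built into the construction by fiat.
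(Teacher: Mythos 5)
Your reconstruction matches the paper's intended argument: the paper just cites \cite[Prop.~6.1]{PSV} and remarks that for finite $k$ one substitutes Propositions \ref{ArtinsNeighbor} and \ref{CartesianDiagram} for the two references there, and what you wrote out is precisely that construction (elementary fibration with $\{\text{f}=0\}$ finite over the base, base change to $U$, diagonal section, $f=q_X^*(\text{f})$). The only caveat worth flagging is that your worry about $\{\text{f}=0\}$ failing to be of pure codimension one is unfounded --- for a nonzero nonunit $\text{f}$ on an irreducible smooth affine $X$, Krull's Hauptidealsatz gives pure codimension one automatically --- so no extra shrinking is needed on that account.
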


\begin{proof}[Proof of Proposition \ref{BasicTripleProp_1}]
If the field $k$ is infinite, then this proposition is proved in
\cite[Prop. 6.1]{PSV}.
So, we may and will assume that $k$ is finite.
To prove the proposition repeat literally the proof of
\cite[Prop. 6.1]{PSV}. One has to replace the references to
\cite[Prop. 2.3]{PSV} and
\cite[Prop.2.4]{PSV}
with the reference to
Propositions
\ref{ArtinsNeighbor}
and
\ref{CartesianDiagram}
respectively.

\end{proof}


Let us state two crucial results which are used in
Section
\ref{Reducing Theorem_MainHomotopy}
to prove Theorem
\ref{ElementaryNisSquare_1}.
Their proofs are given in Sections
\ref{SectElemNisnevichSquare}
and
\ref{Proof_of Theorem equating3_triples}
respectively. If
$U$ as in Definition
\ref{DefnNiceTriple}
then {\it for any $U$-scheme $V$ and any closed point} $u \in U$
set
$$V_u=u\times_U V.$$
For a finite set $A$ denote $\sharp A$ the cardinality of $A$.

\begin{defn}
\label{Conditions_1*and2*}
Let $(\mathcal X,f,\Delta)$ be a special nice triple
over $U$.
We say that the triple
$(\mathcal X,f,\Delta)$
satisfies conditions $1^*$ and $2^*$
if either the field $k$ is infinite or (if $k$ is finite) the following holds
\begin{itemize}
\item[$(1^*)$]
for $\mathcal Z=\{f=0\} \subset \mathcal X$ and
for any closed point $u \in U$, any integer $d \geq 1$
one has
$$\sharp\{z \in \mathcal Z_u| \text{deg}[k(z):k(u)]=d \} \leq \ \sharp\{x \in \Aff^1_u| \text{deg}[k(x):k(u)]=d \}$$
\item[$(2^*)$]
for the vanishing locus $\mathcal Z$ of $f$
and for any closed point $u \in U$ the point $\Delta(u) \in \mathcal Z_u$ is the only
$k(u)$-rational point of $\mathcal Z_u=u\times_U \mathcal Z$.
\end{itemize}

\end{defn}

\begin{thm}
\label{ElementaryNisSquare_1_triples}
Let $U$ be as in Definition
\ref{DefnNiceTriple}.
Let
$(q^{\prime}_U: \mathcal X^{\prime} \to U,f^{\prime},\Delta^{\prime})$
be a special nice triple
over $U$ subject to the conditions
$(1^*)$ and $(2^*)$ from Definition
\ref{Conditions_1*and2*}.
Then there exists a
finite surjective morphism
$$\Aff^1\times U \xleftarrow{\sigma} \mathcal X^{\prime} $$
\noindent
of $U$-schemes which enjoys the following properties:
\begin{itemize}
\item[\rm{(a)}]
the morphism
$\Aff^1\times U \xleftarrow{\sigma|_{\mathcal Z^{\prime}}} \mathcal Z^{\prime}$
is a closed embedding;
\item[\rm{(b)}] $\sigma$ is \'{e}tale in a neighborhood of
$\mathcal Z^{\prime}\cup\Delta^{\prime}(U)$;
\item[\rm{(c)}]
$\sigma^{-1}(\sigma(\mathcal Z^{\prime}))=\mathcal Z^{\prime}\coprod \mathcal Z^{\prime\prime}$
scheme theoretically and
$\mathcal Z^{\prime\prime} \cap \Delta^{\prime}(U)=\emptyset$;
\item[\rm{(d)}]
$\sigma^{-1}(\{0\} \times U)=\Delta^{\prime}(U)\coprod \mathcal D$ scheme theoretically and $\mathcal D \cap \mathcal Z^{\prime}=\emptyset$;
\item[\rm{(e)}]
for $\mathcal D_1:=\sigma^{-1}(\{1\} \times U)$ one has
$\mathcal D_1 \cap \mathcal Z^{\prime}=\emptyset$.
\item[\rm{(f)}]
there is a monic polinomial
$h \in \mathcal O[t]$
such that
$(h)=Ker[\mathcal O[t] \xrightarrow{- \circ \sigma^*}
\Gamma(\mathcal X^{\prime}, \mathcal O_{\mathcal X^{\prime}})/(f^{\prime})]$
\end{itemize}
\end{thm}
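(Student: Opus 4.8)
The plan is to produce $\sigma$ in the form $\sigma=(g,q'_U)\colon \mathcal X'\to \Aff^1\times U$ for a suitably chosen function $g=\sigma^*(t)\in k[\mathcal X']=\Gamma(\mathcal X',\mathcal O_{\mathcal X'})$, so that everything reduces to one interpolation problem: choosing $g$ with prescribed behaviour on the finite $U$-scheme $\mathcal Z'=\{f'=0\}$ and on the section $\Delta'(U)$, plus a global finiteness/smoothness requirement. Two preliminary observations are used throughout. First, since the triple is special, all closed points of $\Delta'(U)$ lie in $\mathcal Z'$, while $(\Delta')^*(f')\neq0$ shows $\Delta'(U)\not\subseteq\mathcal Z'$; thus $\Delta'(U)\cap\mathcal Z'$ is a proper closed subscheme of $\Delta'(U)\cong U$ containing all of its closed points. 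Second, $\mathcal Z'=V(f')$ is a Cartier divisor in the regular scheme $\mathcal X'$, hence Cohen--Macaulay of dimension $\dim U$, and being finite over the regular scheme $U$ it is finite flat over $U$ by miracle flatness; so $k[\mathcal Z']=k[\mathcal X']/(f')$ is a finite locally free $\mathcal O$-module. Finally, the finite surjective $U$-morphism $\Pi\colon\mathcal X'\to\Aff^1\times U$ attached to the nice triple (write $t'=\Pi^*(t)$, so that $k[\mathcal X']$ is a finite $\mathcal O[t']$-module) will be used to repair finiteness after a perturbation.

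\emph{Step 1 (the closed embedding on $\mathcal Z'$).} First I would construct $\bar g\in k[\mathcal Z']$ such that the corresponding $U$-morphism $\mathcal Z'\to\Aff^1\times U$ is a closed embedding with image $V(h)$ for a monic $h\in\mathcal O[t]$, such that $\bar g$ vanishes on $\mathcal Z'\cap\Delta'(U)$ and nowhere else on $\mathcal Z'$, and such that $\bar g$ never takes the value $1$ on $\mathcal Z'$ (equivalently $h(1)\in\mathcal O^\times$). If $k$ is infinite this is routine: take a generic $\mathcal O$-linear combination of generators of $k[\mathcal Z']$ over $\mathcal O$, normalised to vanish along $\Delta'(U)\cap\mathcal Z'$ and to avoid the value $1$. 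If $k$ is finite this is exactly where conditions $(1^*)$ and $(2^*)$ enter, together with Lemma~\ref{SmallAmountOfPoints}: over each closed point $u\in U$, condition $(2^*)$ says $\Delta'(u)$ is the unique $k(u)$-rational point of $\mathcal Z'_u$, so it is sent to $0$; condition $(1^*)$ guarantees, residue-degree by residue-degree, enough closed points of $\Aff^1_{k(u)}$ (all of degree $\ge2$, hence automatically distinct from $0$ and $1$) to receive the remaining points of $\mathcal Z'_u$ injectively and with the jets needed to generate $k[\mathcal Z'_u]$. One then assembles these fibrewise choices into a single $\bar g$ over the semi-local base $U$ by a Chinese-remainder argument, and uses flatness of $\mathcal Z'/U$ to see that $h$ comes out monic of degree $\operatorname{rk}_{\mathcal O}k[\mathcal Z']$. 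This yields (a), (f), and the set-theoretic contributions to (c), (d), (e).

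\emph{Step 2 (lifting to $\mathcal X'$, finiteness and \'etaleness).} Glue $\bar g$ and the zero function on the closed subscheme $\mathcal Z'\cup\Delta'(U)\subseteq\mathcal X'$ (they agree on the overlap) and extend to some $g_0\in k[\mathcal X']$; thus $g_0\equiv\bar g\pmod{f'}$ and $g_0$ vanishes on $\Delta'(U)$. Then set $g=g_0+f'\,h_1$ where $h_1\in k[\mathcal X']$ is chosen to vanish on $\Delta'(U)$ (e.g. a unit multiple of $(t'-(\Delta')^*(t'))\,t'^{\,m}$), of high degree $m$ in $t'$, and with $h_1|_{\mathcal Z'}$ generic; this is the Ojanguren--Panin-type perturbation of Section~\ref{OjPanConstruction}. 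Since $g\equiv\bar g\pmod{f'}$ and $g|_{\Delta'(U)}=0$, the restrictions of $\sigma$ to $\mathcal Z'$ and to $\Delta'(U)$ are as in Step~1, so (a), (f) and the value conditions persist. Using that $k[\mathcal X']$ is finite over $\mathcal O[t']$ and that $f'$ is a non-zero-divisor, a large $m$ forces $k[\mathcal X']$ to be finite over $\mathcal O[g]$, so $\sigma$ is finite; it is surjective because $\mathcal X'$ is irreducible of dimension $\dim(\Aff^1\times U)$ and $\mathcal O[g]\hookrightarrow k[\mathcal X']$. For (b), one checks that $dg$ is a generator of the rank-one relative cotangent sheaf $\Omega^1_{\mathcal X'/U}$ along $\mathcal Z'\cup\Delta'(U)$: along $\mathcal Z'$ one has $dg\equiv dg_0+h_1\,df'\pmod{f'}$, where at points where $\mathcal Z'/U$ is \'etale $df'$ already generates and a generic $h_1|_{\mathcal Z'}$ keeps $dg$ a generator, while at the remaining points $df'$ lies in the maximal ideal and $dg_0$ generates because $\bar g$ generates $k[\mathcal Z']$ over $\mathcal O$; along $\Delta'(U)$ one arranges the fibrewise derivative of $g_0$ to be a unit. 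Being finite and unramified along the Cartier divisor $\mathcal Z'$ (resp. along the section $\Delta'(U)$), $\sigma$ is \'etale there. Then (c) and (d) follow formally: $\mathcal Z'\to\sigma(\mathcal Z')$ and $\Delta'(U)\to\{0\}\times U$ are sections of morphisms that are unramified near $\mathcal Z'$, resp. $\Delta'(U)$ (base changes of $\sigma$), hence open and closed immersions, yielding the scheme-theoretic splittings $\sigma^{-1}(\sigma(\mathcal Z'))=\mathcal Z'\coprod\mathcal Z''$ and $\sigma^{-1}(\{0\}\times U)=\Delta'(U)\coprod\mathcal D$; the disjointnesses $\mathcal Z''\cap\Delta'(U)=\emptyset$, $\mathcal D\cap\mathcal Z'=\emptyset$ and $\mathcal D_1\cap\mathcal Z'=\emptyset$ in (c), (d), (e) hold because the corresponding intersections have empty support (by the value conditions on $\bar g$), hence are empty schemes.

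The \textbf{main obstacle} is Step~2: since the correction term $f'\,h_1$ is multiplied by the \emph{non-unit} $f'$, it is not automatic that adding it preserves finiteness of the right degree, nor that it does not spoil unramifiedness along $\mathcal Z'\cup\Delta'(U)$; one must use the given finite morphism $\Pi$ (and be willing to shrink $X$ and to replace $f'$ by a unit multiple) to keep the leading behaviour in $t'$ under control --- this balancing act is precisely what the construction of Section~\ref{OjPanConstruction} provides. A secondary subtlety, peculiar to finite ground fields, is ensuring in Step~1 that $\bar g$ genuinely generates $k[\mathcal Z']$ as an $\mathcal O$-algebra, not merely separates the points of the fibres, which at non-reduced fibre points requires the counting Lemma~\ref{SmallAmountOfPoints} (controlling also the jets), with conditions $(1^*)$ and $(2^*)$ supplying exactly the room needed. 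Once these two points are handled, the remaining verifications are formal, as indicated above.
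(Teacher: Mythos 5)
Your proposal takes a genuinely different route from the paper, and the route as written has a gap at exactly the place you flag as the ``main obstacle.''

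The paper does not perturb an affine function $g_0$ by a multiple of $f'$. Instead it compactifies: it takes the normalization $\bar{\mathcal X'}$ of $\mathbb P^1\times U$ in $k(\mathcal X')$, lets $\mathcal L=\mathcal O_{\bar{\mathcal X'}}(\mathcal X'_\infty)$ with canonical section $s_0$, forms the sheaf of ideals $J$ cutting out $\mathcal X'_\infty$, an auxiliary reduced $0$-dimensional $W$, and the second-order neighborhoods $z^{(2)}$ of the points of $\mathcal Z'_u$; then by a Serre-type vanishing ($H^1(\bar{\mathcal X'},J\otimes\mathcal L^{\otimes n})=0$ for $n\gg0$) it finds a global section $s_1\in H^0(\bar{\mathcal X'},\mathcal L^{\otimes n})$ with the prescribed restriction mod $J$, and sets $\sigma=(s_1/s_0^n,\,pr_U)$. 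Finiteness of $\sigma$ is then essentially free: $([s_0^n:s_1],pr_U)\colon\bar{\mathcal X'}\to\mathbb P^1\times U$ is projective and quasi-finite over a neighborhood of $\mathbb P^1_{\mathbf u}$ (thanks to $W$ meeting every component of $\mathcal X'_{\mathbf u}$), hence finite since $U$ is semi-local. Your fibrewise construction of $\bar g$ in Step~1 (via $(1^*)$, $(2^*)$, and the jets $z^{(2)}$) is essentially the same as the paper's Steps (i)--(ii); the real divergence is Step~2.

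The gap: the claim that ``a large $m$ forces $k[\mathcal X']$ to be finite over $\mathcal O[g]$'' for $g=g_0+f'h_1$ is unjustified and, as stated, not believable. Adding $f'h_1$ with $f'$ a non-unit gives no control over the leading behavior in $t'$ (the orders of poles of $f'$ and of $g_0$ at the points of $\bar{\mathcal X'}\setminus\mathcal X'$ are not controlled by $\Pi$), so there is no reason for $\mathcal O[g]\hookrightarrow k[\mathcal X']$ to be module-finite; this is exactly what the projective compactification and the ample bundle $\mathcal L^{\otimes n}$ in the paper are there to handle. Moreover, the source you invoke to repair this, Section~\ref{OjPanConstruction}, does not do what you say: Construction~\ref{new_nice_triple} and Proposition~\ref{prop:refining_triples} produce a new nice triple from a finite \'etale cover $S''\to S$ of a semi-local subscheme (they are used to prove Theorem~\ref{ThEquatingGroups_1}, i.e.\ to \emph{arrange} $(1^*)$, $(2^*)$), and Lemma~\ref{Lemma_8_2} merely shrinks $\mathcal X$ to a smaller open $\mathcal W$ still carrying a finite surjective $U$-morphism; neither controls the finiteness of $(g,q'_U)$ after a perturbation by $f'h_1$. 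Similarly, Lemma~\ref{SmallAmountOfPoints} is consumed in the proof of Theorem~\ref{ThEquatingGroups_1}, not inside the present theorem; your Step~1 only needs $(1^*)$, $(2^*)$ directly. One small further point: the paper does not arrange $s_1$ to avoid the value $1$ on $\mathcal Z'$; it proves (e) over a finite field by a short argument using $(2^*)$ (a common point $z\in\mathcal D_1\cap\mathcal Z'$ would be $k(u)$-rational, hence $z=\Delta'(u)$, but then $\sigma(z)=0\neq1$). Your alternative of avoiding the value $1$ from the start is feasible under $(1^*)$, $(2^*)$, but it is a different route and needs to be said. To close the gap you would need either an explicit finiteness lemma for perturbations of Noether-normalization elements that works over finite fields (not present in this paper), or to switch to the paper's projective argument.
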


\begin{thm}
\label{ThEquatingGroups_1}
Let $U$ be as in Definition
\ref{DefnNiceTriple}. Let $(\mathcal X,f,\Delta)$ be a special nice triple
over $U$.
Then there exists a morphism
$\theta:(\mathcal X^{\prime},f^{\prime},\Delta^{\prime})\to(\mathcal X,f,\Delta)$
of nice triples {\bf over} $U$ such that
$(\mathcal X^{\prime},f^{\prime},\Delta^{\prime})$
is a special nice triple satisfying the conditions
$(1^*)$ and $(2^*)$ from Definition
\ref{Conditions_1*and2*}.
\end{thm}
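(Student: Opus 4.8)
The plan is to build the refined triple $(\mathcal X',f',\Delta')$ by composing two constructions. If $k$ is infinite there is nothing to prove, so assume $k$ finite; the goal is then to arrange conditions $(1^*)$ and $(2^*)$ by passing along an étale morphism of $U$-schemes that keeps the triple special. The key geometric input is Poonen's Bertini theorem over finite fields together with the construction of new nice triples out of simple data recalled in Section \ref{OjPanConstruction}. I would proceed as follows. First, I would use the finite surjective $U$-morphism $\Pi:\mathcal X\to\Aff^1\times U$ attached to $(\mathcal X,f,\Delta)$, together with the vanishing locus $\mathcal Z=\{f=0\}$, which is finite over $U$ by condition (b). For each closed point $u\in U$ the fibre $\mathcal Z_u$ is a finite $k(u)$-scheme; the issue is that, over the small finite residue field $k(u)$, there may be too many points of each degree $d$, or more than one $k(u)$-rational point, so $(1^*)$ or $(2^*)$ can fail for $\mathcal X$ itself.

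The core step is to choose, over a suitable shrinking of $X$, an auxiliary function — a hyperplane section or, more precisely, an element of a large-degree linear system on a projective completion of $\mathcal X$ relative to $U$ — that is transverse to $\mathcal Z$ and to $\Delta(U)$ over every closed point of $U$ simultaneously, and that moreover separates the points of $\mathcal Z_u$ into residue fields of controlled degrees. This is exactly the kind of statement for which Poonen-style Bertini over finite fields (in the semi-local, relative form) applies: one has only finitely many closed points $u\in U$ to worry about because $U$ is semi-local, and for each of them the relevant avoidance is a codimension condition in the linear system, so a common section exists after passing to a high enough degree. The output of this step is an étale neighborhood $\theta:\mathcal X'\to\mathcal X$ of $\mathcal Z\cup\Delta(U)$, equipped with $f'=\theta^*(f)\cdot h'$ and $\Delta'$ with $\Delta=\theta\circ\Delta'$, so that $\theta$ is a morphism of nice triples in the sense of Definition \ref{DefnMorphismNiceTriple}; by Remark \ref{NiceToSpecialNice} the new triple is again special, and by construction its new vanishing locus $\mathcal Z'$ sits over $\Aff^1\times U$ in such a way that the fibrewise counting inequality $(1^*)$ holds and the only $k(u)$-rational point of $\mathcal Z'_u$ is $\Delta'(u)$, giving $(2^*)$.

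I expect the main obstacle to be the simultaneous, fibrewise control of the residue-field degrees in $(1^*)$: it is not enough to find a generic section that is merely smooth and avoids finitely many bad points; one must guarantee that over each $k(u)$ the zero scheme $\mathcal Z'_u$ embeds, via an appropriate coordinate, into $\Aff^1_{k(u)}$ without creating, for any $d$, more degree-$d$ closed points than $\Aff^1_{k(u)}$ itself has. Handling this requires the quantitative version of Lang–Weil/Poonen point counts over finite fields — which is presumably why Lemma \ref{SmallAmountOfPoints} is isolated in Appendix A and why the hypotheses $(1^*)$,$(2^*)$ are phrased as explicit inequalities. The remaining verifications — that $\theta$ is étale near $\mathcal Z\cup\Delta(U)$, that $f'$ has the required factorization, that $\Delta'$ is a section, and that conditions (a)–(d) of Definition \ref{DefnNiceTriple} survive (condition (c), the existence of a finite surjective $\Pi':\mathcal X'\to\Aff^1\times U$, coming from the construction of Section \ref{OjPanConstruction}, and condition (d), $\Delta'^*(f')\neq 0$, from $\Delta^*(f)\neq 0$ and $\Delta=\theta\circ\Delta'$) — should be routine bookkeeping once the section has been chosen.
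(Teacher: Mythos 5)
Your overall skeleton matches the paper: you correctly identify that the theorem is trivial for infinite $k$, that the refinement is produced by Construction~\ref{new_nice_triple} (passing along a finite \'etale morphism that gives a morphism of nice triples), that Remark~\ref{NiceToSpecialNice} preserves specialness, and that the quantitative input is isolated in Lemma~\ref{SmallAmountOfPoints}. Those are all the right ingredients.

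However, there is a genuine conceptual gap in how you propose to produce the refinement, and it is exactly where the difficulty lies. You describe the core step as choosing ``an element of a large-degree linear system on a projective completion of $\mathcal X$ relative to $U$ \ldots transverse to $\mathcal Z$ and to $\Delta(U)$,'' invoking ``Poonen-style Bertini over finite fields.'' This would not work, and it is not what the paper does. The scheme $\mathcal Z=\{f=0\}$ and its closed fibres $\mathcal Z_u$ are determined by $f$; one cannot shrink them or change the residue fields of their points by slicing $\mathcal X$ with a divisor, however ample. If $\mathcal Z_u$ already has too many $k(u)$-rational points to embed in $\Aff^1_u$, no Bertini section will fix this. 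What is needed is to pass \emph{upward} along a finite \'etale cover that enlarges the residue-field degrees of the points sitting above $\mathcal Z_u$, so that they fit into $\Aff^1_u$ where points of higher degree are abundant. That is precisely what Lemma~\ref{SmallAmountOfPoints} supplies: a finite \'etale $\rho: S''\to S$, constructed \emph{not} by Bertini but by tensoring with a prime-power cyclotomic extension $k(q)$ of the (finite) ground field and lifting a suitable monic polynomial; the counting done in Appendix~A is an elementary argument with the norm map on finite-field units and the Euler $\varphi$-function (Lemmas~\ref{l:F1F2_preliminary}, \ref{l:F1F2_surjectivity}, \ref{l:S'_and_S''_preliminary}), not Lang--Weil or Poonen. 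Poonen's Bertini theorems do appear in the paper, but only in Appendix~B (proof of Proposition~\ref{ArtinsNeighbor}), which is used to build the \emph{initial} special nice triple via elementary fibrations, upstream of the theorem under discussion.

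So the gap is: your mechanism for achieving $(1^*)$ and $(2^*)$ — cutting by a well-chosen section — cannot achieve them, because it alters the ambient scheme rather than the residue degrees of the zero locus; and the tool you name (Poonen/Bertini/Lang--Weil) is neither used for this step in the paper nor capable of supplying the finite \'etale cover $S''\to S$ together with the fibrewise degree inequalities $(1^*)$, $(2^*)$. Replacing that paragraph with: ``apply Construction~\ref{new_nice_triple} to the cover $\rho: S''\to S$ and its section $\delta'$ over $T$ produced by Lemma~\ref{SmallAmountOfPoints}, and invoke Proposition~\ref{prop:refining_triples}(iv) to transfer properties (1), (2) of $S''$ into conditions $(1^*)$, $(2^*)$ for $\mathcal Z'\subset S''$'' would close the argument.
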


\section{One of the main construction}
\label{OjPanConstruction}
In this section beginning with a nice triple we construct a new one.
Namely,
beginning with a special nice triple
$(\mathcal X,f,\Delta)$
over $U$
we construct a special nice triple
$(\mathcal X',f',\Delta')$
over $U$ and a morphism
$$\theta: (\mathcal X',f',\Delta') \to (\mathcal X,f,\Delta)$$
between nice triples over $U$,
which has, particularly, the following property:\\
if $\Pi:\mathcal X\to\Aff^1\times U$ is a finite surjective morphism assinged to the nice triple,
and $\mathcal Y=\Pi^{-1}(\Pi(\mathcal Z \cup \Delta(U)))$ is the closed subset in
$\mathcal X$, and $y_1,\dots,y_m$ are all its closed points, and
$S=\text{Spec}(\mathcal O_{\mathcal X,y_1,\dots,y_m})$,
and $S'=\theta^{-1}(S)$, {\it then}
$S'$ {\it is \'{e}tale and finite over} $S$,
{\it irreducible and
the set of closed points of the closed subset
}
$\{f'=0\}$
in
{\it $\mathcal X$
is contained in the set of closed points of
}
$S'$.

We begin with recalling the following geometric lemma
\cite[Lemma 8.2]{OP1}.
For reader's convenience we state that Lemma adapting
notation to the ones of the present section.
The following lemma is equivalent to the one
\cite[Lemma 8.2]{OP2}.
\begin{lem}
\label{Lemma_8_2}
Let $U$ be as in Definition \ref{DefnNiceTriple} and let
$(\mathcal X,f,\Delta)$ be a nice triple over $U$.
Since $(\mathcal X,f,\Delta)$ is a nice triple over $U$ there is
a finite surjective morphism
$\Pi:\mathcal X\to\Aff^1\times U$ of $U$-schemes.
{\bf Let $\mathcal Y$ be a closed nonempty subset of $\mathcal X$, finite over $U$.
}
Let $\mathcal V$ be an open
subset of $\mathcal X$ containing  $\Pi^{-1}(\Pi(\mathcal Y))$.
Then there
exists an open subset $\mathcal W \subseteq \mathcal V$ still
containing $\Pi^{-1}(\Pi(\mathcal Y))$ and such that
\begin{itemize}
\item
the data $(\mathcal W,f|_{\mathcal W},\Delta)$ is a nice triple over $U$;
\item
the open embedding
$i: \mathcal W \hookrightarrow \mathcal X$
is a morphism
$(\mathcal W,f|_{\mathcal W},\Delta)\to (\mathcal X,f,\Delta)$
between the nice triples over $U$.
\end{itemize}
\end{lem}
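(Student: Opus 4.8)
The plan is to reduce the statement to a standard fact about finite surjective morphisms: the locus where a finite morphism fails to be flat, or where the fibre dimension jumps, is closed and does not meet the given finite-over-$U$ subset if we arrange things correctly. Concretely, I would proceed as follows. First, since $\mathcal Y$ is closed in $\mathcal X$ and finite over $U$, and $\Pi:\mathcal X\to\Aff^1\times U$ is finite surjective, the image $\Pi(\mathcal Y)$ is closed in $\Aff^1\times U$ and finite over $U$; hence $T:=\Pi^{-1}(\Pi(\mathcal Y))$ is a closed subset of $\mathcal X$ which is again finite over $U$ (being closed in $\mathcal X$ and finite over $\Pi(\mathcal Y)$, which is finite over $U$). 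This is the ``thickened'' subset that must survive inside $\mathcal W$. Note $T\subseteq\mathcal V$ by hypothesis.

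Next I would shrink $\mathcal V$ to an open $\mathcal W$ still containing $T$ but small enough that the three defining conditions of a nice triple persist. The key point is that each of the conditions (a)--(d) of Definition \ref{DefnNiceTriple} is an open-type condition relative to $T$, or can be made so:
\begin{itemize}
\item[(i)] $q_U|_{\mathcal W}:\mathcal W\to U$ is automatically smooth, since it is a restriction of the smooth morphism $q_U$ to an open subscheme; irreducibility of $\mathcal W$ can be secured because $\mathcal X$ is irreducible, so any nonempty open subset is irreducible — I only need $\mathcal W\neq\emptyset$, which holds as $T$ is nonempty.
\item[(a)] Every fibre of $q_U|_{\mathcal W}$ is an open subscheme of the corresponding one-dimensional fibre of $q_U$, so each of its irreducible components has dimension $\le 1$; on the other hand $\mathcal W\supseteq T$ which is finite, nonempty over $U$ — this does not by itself force dimension exactly one, so here I would instead argue via the finite surjective $\Pi$ constructed below, whose existence forces the fibres of $q_U|_{\mathcal W}$ to be one-dimensional.
\item[(c)] This is the crux and is handled in the next paragraph.
\item[(b) and (d)] Condition (d), $\Delta^*(f|_{\mathcal W})=\Delta^*(f)\neq 0$, is inherited verbatim since $\Delta(U)\subseteq\mathcal W$ (because $\Delta(U)$ meets $T$, or more precisely because $T\supseteq\Pi^{-1}(\Pi(\mathcal Y))$ and in the application $\mathcal Y\supseteq\Delta(U)$; in the generality stated I would add $\Delta(U)$ to $T$, replacing $\mathcal Y$ by $\mathcal Y\cup\Delta(U)$, which is still closed and finite over $U$). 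Condition (b), finiteness of $k[\mathcal W]/(f|_{\mathcal W})$ over $\mathcal O$, follows once (c) is in place, since condition (b) is implied by the existence of a finite surjective $U$-morphism $\mathcal W\to\Aff^1\times U$ together with the fibre-dimension statement (a).
\end{itemize}

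The main obstacle, and the heart of the proof, is producing a finite surjective $U$-morphism $\Pi':\mathcal W\to\Aff^1\times U$ for a suitably small open $\mathcal W$ with $T\subseteq\mathcal W\subseteq\mathcal V$. The idea is to start from the given finite surjective $\Pi:\mathcal X\to\Aff^1\times U$ and observe that $\Pi$ restricted to $\mathcal X\setminus(\Pi^{-1}(\Pi(\mathcal X\setminus\mathcal V)))$ is still finite onto its image; since $\mathcal X\setminus\mathcal V$ is closed in $\mathcal X$ and hence finite over $U$ (it is contained in the complement, but more to the point $\Pi$ is finite so $\Pi(\mathcal X\setminus\mathcal V)$ is closed), its image is a proper closed subset $C\subseteq\Aff^1\times U$ finite over $U$, and $\mathcal W_0:=\mathcal X\setminus\Pi^{-1}(C)=\Pi^{-1}((\Aff^1\times U)\setminus C)$ is an open subset of $\mathcal V$ on which $\Pi$ is finite and surjective onto the open subscheme $(\Aff^1\times U)\setminus C$ of $\Aff^1\times U$. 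That target is not all of $\Aff^1\times U$, so I still need to ``fill in''. Here I would invoke the standard trick (as in Voevodsky's treatment of standard triples, \cite[§4]{V}, and as used in \cite{PSV}): since $C$ is finite over $U$, after a further affine change of the $\Aff^1$-coordinate one can assume $C$ avoids, say, a section; but more robustly, one uses that a finite morphism to an open subset of $\Aff^1\times U$ whose complement is finite over $U$ can be completed to a finite morphism to $\Aff^1\times U$ by normalizing, or one simply checks that the needed consequences — conditions (a), (b) of a nice triple — already follow from $\Pi|_{\mathcal W_0}$ being finite onto $(\Aff^1\times U)\setminus C$ together with properness arguments over $U$. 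Finally I set $\mathcal W=\mathcal W_0$ (intersected with any further open needed to keep $\mathcal W\subseteq\mathcal V$, which is automatic), note $T\subseteq\mathcal W$ because $\Pi(T)=\Pi(\mathcal Y)\subseteq\Pi(\mathcal V)$ is disjoint from $C$ by construction of $C$, and verify that the open embedding $i:\mathcal W\hookrightarrow\mathcal X$ satisfies conditions (1)--(3) of Definition \ref{DefnMorphismNiceTriple}: (1) is $q_U\circ i=q_U|_{\mathcal W}$, trivially; (2) holds with $h'=1$ since $f|_{\mathcal W}=i^*(f)$; (3) holds since $\Delta$ factors through $\mathcal W$ as $\Delta(U)\subseteq T\subseteq\mathcal W$. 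This completes the argument; the delicate points to write out carefully in the full proof are the closedness and finiteness-over-$U$ of $\Pi(\mathcal X\setminus\mathcal V)$ and the completion step for the finite surjective morphism.
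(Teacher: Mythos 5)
The paper does not prove this lemma in the text; it simply records that the statement is equivalent to Lemma~8.2 of \cite{OP2} and refers to that paper, so there is no ``in-house'' argument to compare against. The proof you outline, however, contains a real gap at its central step. You assert that $\mathcal X\setminus\mathcal V$ is ``closed in $\mathcal X$ and hence finite over $U$,'' and then use this to conclude that $C:=\Pi(\mathcal X\setminus\mathcal V)$ is finite over $U$. This is false: $\mathcal X$ is a relative curve over $U$ (not finite over $U$), so a closed subset need not be finite over $U$. Concretely, take $U=\Spec(k[s]_{(s)})$, $\mathcal X=\Aff^1_U=\Spec(k[s]_{(s)}[t])$, $\Pi=\id$, $\mathcal Y=V(t)$, and $\mathcal V=\mathcal X\setminus V(st-1)$. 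Then $\mathcal X\setminus\mathcal V=V(st-1)=\Spec(k(s))$ is closed in $\mathcal X$ but is \emph{not} finite over $U$ (the $k[s]_{(s)}$-module $k(s)$ is not finitely generated, since $V(st-1)$ is not proper over $U$). The set $C$ inherits the same failure, so the open $\mathcal W_0=\Pi^{-1}\bigl((\Aff^1\times U)\setminus C\bigr)$ is the complement of a non-proper closed set, and none of your three proposed finishing moves (coordinate change avoiding a section; ``completion by normalizing''; verifying conditions (a), (b) alone) recovers a finite \emph{surjective} $U$-morphism onto all of $\Aff^1\times U$ --- which is exactly what condition (c) of Definition~\ref{DefnNiceTriple} demands.

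The correct mechanism is quite different in spirit: one does not try to restrict and extend the given $\Pi$, but rather builds a genuinely new $\Pi^*$ from scratch. The idea, following Ojanguren--Panin (and closely paralleling what this paper itself does in Step~(iii)--(vi) of the proof of Theorem~\ref{ElementaryNisSquare_1_triples}), is to pass to the normalization $\bar{\mathcal X}$ of $\mathbb P^1\times U$ in $k(\mathcal X)$, so that $\bar{\mathcal X}\setminus\mathcal X=\mathcal X_\infty$ is finite over $U$ and $\mathcal L:=\mathcal O_{\bar{\mathcal X}}(\mathcal X_\infty)$ is relatively ample. After shrinking $\mathcal V$ to a principal open $\mathcal X_h\subseteq\mathcal V$ still containing $T=\Pi^{-1}(\Pi(\mathcal Y))$ (possible because $T$ is semi-local and avoids $\mathcal X\setminus\mathcal V$, using prime avoidance), one realizes the function $h$ as a section $s_1$ of $\mathcal L^{\otimes m}$ for suitable $m$, chooses the canonical section $s_0$ of $\mathcal L$ cutting out $\mathcal X_\infty$, and --- using semilocality of $U$ and Serre vanishing over $\mathbb P^1_U$ --- modifies $s_1$ so that $s_0^m$ and $s_1$ have no common zeros; the resulting $[s_0^m:s_1]:\bar{\mathcal X}\to\mathbb P^1\times U$ is finite surjective, and restricting over $\Aff^1\times U$ gives a finite surjective $\Pi^*:\mathcal W\to\Aff^1\times U$ for $\mathcal W=\mathcal X_{s_1}\subseteq\mathcal V$ containing $T$. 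Verifying conditions (b) and (iii) of a nice triple is then routine, with the caveat --- which you correctly flag --- that one should enlarge $\mathcal Y$ to $\mathcal Y\cup\Delta(U)$ so that $\Delta$ factors through $\mathcal W$.
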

Let $\Pi:\mathcal X\to\Aff^1\times U$ be the finite
surjective $U$-morphism as in the lemma
\ref{Lemma_8_2}.
Consider the following diagram
$$ \xymatrix{
{}&\mathcal Z \ar[d]&{}\\
{\mathcal X - \mathcal Z\ }\ar@{^{(}->}@<-2pt>[r]&\mathcal X\ar@<2pt>[d]^{q_U}\ar[r]^>>>>{\Pi}& \Aff^1 \times U\\
{}& U\ar@<2pt>[u]^{\Delta}&{} } $$
\noindent
Here and in the construction \ref{new_nice_triple}  below
$\mathcal Z$ is the closed
{\bf subset}
defined by the equation
$f=0$. By the assumption, $\mathcal Z$ is finite over $U$.

\begin{constr}(compare  with \cite[the proof of lemma 8.1]{OP2})
\label{new_nice_triple}
Let $U$ be as in Definition \ref{DefnNiceTriple} and let
$(\mathcal X,f,\Delta)$ be a nice triple over $U$.
Since $\Delta$ is a section of $q_U$, hence $\Delta(U)$ is a closed subset in
$\mathcal X$.
Let
$\Pi:\mathcal X\to \Aff^1\times U$
a finite surjective morphism
of $U$-schemes, which exists, since
$(\mathcal X,f,\Delta)$ be a nice triple over $U$.
Let
$\mathcal Y=\Pi^{-1}(\Pi(\mathcal Z \cup \Delta(U)))$ be the closed subset in
$\mathcal X$.
Since
$\mathcal Z$ and $\Delta(U)$ are both finite over $U$ and since
$\Pi$ is a finite morphism of $U$-schemes, $\mathcal Y$ is also
finite over $U$. Denote by $y_1,\dots,y_m$ its closed points and
let $S=\text{Spec}(\mathcal O_{\mathcal X,y_1,\dots,y_m})$.
Since $\mathcal Y$ is also
finite over $U$ it is closed in $\mathcal X$.
Since $\mathcal Y$ is contained in $S$
it is also closed in $S$.
Set $T=\Delta(U)\subseteq S$.

So, given a nice triple
$(\mathcal X,f,\Delta)$ and the morphism $\Pi$
we get certain data
$(\mathcal Z, \mathcal Y, S, T)$,
where
$\mathcal Z, \mathcal Y, T$
are closed subsets as in $\mathcal X$, so in $S$.
And
$S$ is a semi-local subscheme in $\mathcal X$.
Particularly, the set of all closed points of $\mathcal Z$
{\it is contained
}
in the set of all closed points of $S$.

{\bf Let $\theta_0:S^{\prime}\to S$ be a finite
\'etale morphism with irreducible $S'$ and let
$\delta:T\to S^{\prime}$
be a section
of $\theta_0$ over $T$.
}
We can extend these data to a
neighborhood $\mathcal V$ of $\{y_1,\dots,y_n\}$ in
$\mathcal X$
and get the
diagram
\begin{equation}
\label{Diag_Manuel}
\xymatrix{
     {}  &  S^{\prime} \ar[d]^{\theta_0} \ar@{^{(}->}@<-2pt>[r]  & {\mathcal V}^{\prime}  \ar[d]_{\theta} &\\
     T \ar@{^{(}->}@<-2pt>[r] \ar[ur]^{
\delta } & S \ar@{^{(}->}@<-2pt>[r]  &   \mathcal V \ar@{^{(}->}@<-2pt>[r]  &  \mathcal X &\\
    }
\end{equation}
\noindent
where
$\theta: {\mathcal V}^{\prime}\to\mathcal V$
finite \'etale (and the square is cartesian). Since $T$ isomorphically projects onto $U$, it is still closed
viewed as a sub-scheme of $\mathcal V$. Note that since $\mathcal
Y$ is semi-local and $\mathcal V$ contains all of its closed
points, $\mathcal V$ contains $\Pi^{-1}(\Pi(\mathcal Y))=\mathcal
Y$. By Lemma \ref{Lemma_8_2} there exists an open subset $\mathcal
W\subseteq\mathcal V$ containing $\mathcal Y$ (and hence containing $S$)
and endowed with a
finite surjective $U$-morphism $\Pi^*:\mathcal W\to\Aff^1\times
U$.
Let $\mathcal X^{\prime}=\theta^{-1}(\mathcal W)$,
$f^{\prime}=\theta^{*}(f)$, $q^{\prime}_U=q_U\circ\theta$, and let
$\Delta^{\prime}:U\to\mathcal X^{\prime}$ be the section of
$q^{\prime}_U$ obtained as the composition of $\delta$ with
$\Delta$. This way we get \\
1) firstly, a triple $(\mathcal X^{\prime},f^{\prime},\Delta^{\prime})$;\\
2) secondly, the \'{e}tale morphism of $U$-schemes $\theta: \mathcal X^{\prime} \to \mathcal W \hookrightarrow \mathcal X$;\\
3) thirdly, inclusions of $U$-schemes $S\subset \mathcal W$ and $S' \subset \mathcal X'$ and the equality $S'=\theta^{-1}(S)$.
\end{constr}

\begin{prop}\label{prop:refining_triples}
Under the hypotheses and notation of the construction~\ref{new_nice_triple}
the following is true:
\begin{itemize}
\item[\rm{(i)}]
the triple $(\mathcal X^{\prime},f^{\prime},\Delta^{\prime})$ is a nice triple over $U$;
\item[\rm{(ii)}]
the \'{e}tale morphism
$\theta:\mathcal X^{\prime} \to\mathcal X$
is a morphism between the nice triples
$$\theta: (\mathcal X^{\prime},f^{\prime},\Delta^{\prime}) \to (\mathcal X,f,\Delta) \ \ \text{with} \ \ h^{\prime}=1;$$
\item[\rm{(iii)}]
if the triple
$(\mathcal X,f,\Delta)$
is a special nice triple over $U$, then
the triple
$(\mathcal X^{\prime},f^{\prime},\Delta^{\prime})$
is a special nice triple over $U$;
\item[\rm{(iv)}]
let $\mathcal Z^{\prime}$ be the vanishing locus of $f^{\prime}$ in $\mathcal X^{\prime}$,
then
$\mathcal Z'$ is contained in $S'$ as a closed subset and, particularly,
the set of closed points of $\mathcal Z^{\prime}$ is contained in the set of closed
points of the subscheme $S^{\prime}$.
\end{itemize}
\end{prop}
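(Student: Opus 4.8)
The four assertions are essentially bookkeeping on top of the construction in \ref{new_nice_triple} together with Lemma \ref{Lemma_8_2}. The plan is to verify the defining conditions of a nice triple (and of a special one) for $(\mathcal X',f',\Delta')$ one by one, using that $\theta\colon \mathcal X'\to\mathcal X$ is finite \'etale onto an open $\mathcal W\subseteq \mathcal X$, and that $\mathcal W$ was chosen via Lemma \ref{Lemma_8_2} to carry a finite surjective $U$-morphism $\Pi^*\colon \mathcal W\to \Aff^1\times U$.

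For (i): smoothness of $q'_U=q_U\circ\theta$ follows since $q_U$ is smooth and $\theta$ is \'etale; irreducibility of $\mathcal X'$ follows from irreducibility of $S'$ together with the fact that $\mathcal X'$ is \'etale over the irreducible $\mathcal W$ and $S'\subseteq \mathcal X'$ is dense (being the preimage of the semi-local $S$, which meets every component of $\mathcal W$ since $\mathcal W$ is irreducible). Condition (a) (one-dimensional fibres) is inherited through the \'etale $\theta$ from $\mathcal X$; condition (c) is exactly the composite $\mathcal X'\xrightarrow{\theta}\mathcal W\xrightarrow{\Pi^*}\Aff^1\times U$, which is finite surjective. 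Condition (b), finiteness of $k[\mathcal X']/(f')$ over $\mathcal O$: since $f'=\theta^*(f|_{\mathcal W})$ and $\theta$ is finite, $k[\mathcal X']/(f')$ is finite over $k[\mathcal W]/(f|_{\mathcal W})$, which in turn is finite over $\mathcal O$ by condition (b) for the nice triple $(\mathcal W,f|_{\mathcal W},\Delta)$ produced by Lemma \ref{Lemma_8_2}. Condition (d): $\Delta'^*(f')=(\delta\circ\Delta)^*\theta^*(f)=\Delta^*(f)\ne 0$ since $\delta$ is a section of $\theta_0$ and $\theta_0\circ\delta=\id_T$, so pulling back along $\Delta'$ recovers pulling back along $\Delta$.

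For (ii): conditions (1) and (3) of Definition \ref{DefnMorphismNiceTriple} hold by construction ($q'_U=q_U\circ\theta$ and $\Delta=\theta\circ\Delta'$, the latter because $\Delta'=\delta\circ\Delta$ lands in $S'$ and $\theta\circ\delta$ is the inclusion $T\hookrightarrow S\subseteq\mathcal W$, hence $\theta\circ\Delta'=\Delta$), and condition (2) holds with $h'=1$ since $f'=\theta^*(f)$ exactly. Assertion (iii) is immediate from Remark \ref{NiceToSpecialNice}: $(\mathcal X',f',\Delta')$ maps to the special nice triple $(\mathcal X,f,\Delta)$ via the morphism of (ii), so it is special. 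For (iv): $\mathcal Z'=\theta^{-1}(\mathcal Z\cap\mathcal W)$; since $\mathcal W\supseteq \mathcal Y\supseteq \Pi^{-1}(\Pi(\mathcal Z))\supseteq \mathcal Z$, we have $\mathcal Z\cap \mathcal W=\mathcal Z\subseteq S$, hence $\mathcal Z'\subseteq \theta^{-1}(S)=S'$, and it is closed in $S'$ because $\mathcal Z'$ is closed in $\mathcal X'$. The statement about closed points follows.

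The only genuinely delicate point is the \emph{irreducibility} of $\mathcal X'$ in (i): one must argue that $S'\subseteq \mathcal X'$ is scheme-theoretically dense, i.e. that $S'$ (which is irreducible by hypothesis on $\theta_0$) meets every irreducible component of the \'etale-over-$\mathcal W$ scheme $\mathcal X'$. This uses that $\mathcal W$ is irreducible (so $S$, being semi-local containing $y_1,\dots,y_m$ which lie on all of $\mathcal W$ after shrinking, is dense in $\mathcal W$) and that $S'=\theta^{-1}(S)$ with $\theta$ finite \'etale; then density of $S$ in $\mathcal W$ plus finiteness of $\theta$ forces every component of $\mathcal X'$ to meet $S'$, and irreducibility of $S'$ pins down $\mathcal X'$ to a single component. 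I would spell this out carefully and leave the remaining verifications to the reader as routine.
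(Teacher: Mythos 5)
Your proof is correct and follows the same route as the paper's: smoothness, the one-dimensional-fibre condition, and finiteness of $k[\mathcal X']/(f')$ over $\mathcal O$ are all inherited through the finite \'etale $\theta$, the finite surjective morphism $\Pi^*\circ\theta$ (with $\Pi^*$ from Lemma~\ref{Lemma_8_2}) supplies condition~(c), and assertions (ii)--(iv) are obtained exactly as in the paper via Remark~\ref{NiceToSpecialNice} and the identification $\mathcal Z'=\theta^{-1}(\mathcal Z)\subseteq\theta^{-1}(S)=S'$. You additionally verify irreducibility of $\mathcal X'$ (via the fact that every generic point of $\mathcal X'$ lies over the generic point of $\mathcal W$ and hence in the irreducible localization $S'$) and the non-vanishing $\Delta'^*(f')=\Delta^*(f)\neq 0$, both of which the paper's write-up leaves tacit; these are legitimate points worth spelling out.
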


\begin{proof}
Firstly, the structure morphism
$q^{\prime}_U:\mathcal X^{\prime} \to U$ coincides with the
composition
$$
\mathcal X^{\prime}\xra{\theta}
\mathcal W\hra\mathcal X\xra{q_U} U.
$$
\noindent
Thus, it is smooth. The element $f^{\prime}$ belongs to the ring
$\Gamma(\mathcal X^{\prime},\mathcal O_{\mathcal X^{\prime}})$,
the morphism $\Delta^{\prime}$ is a section of $q^{\prime}_U$.
Each component of each fibre of the morphism $q_U$ has dimension
one, the morphism $\mathcal X^{\prime}\xra{\theta}\mathcal
W\hra\mathcal X$ is \'{e}tale. Thus, each component of each fibre
of the morphism $q^{\prime}_U$ is also of dimension one. Since
$\{f=0\} \subset {\mathcal W}$ and $\theta: \mathcal X^{\prime}
\to {\mathcal W}$ is finite, $\{f^{\prime}=0\}$ is finite over
$\{f=0\}$ and hence  also over $U$. In other words, the $\mathcal
O$-module $\Gamma(\mathcal X^{\prime},\mathcal O_{\mathcal
X^{\prime}})/f^{\prime}\cdot\Gamma(\mathcal X^{\prime},\mathcal
O_{\mathcal X^{\prime}})$ is finite. The morphism $\theta:
\mathcal X^{\prime}\to\mathcal W$ is finite and surjective.
By the first item of
Lemma \ref{Lemma_8_2}
there is a finite surjective morphism
$\Pi^*:\mathcal W\to\Aff^1\times U$. It follows that
$\Pi^*\circ\theta:\mathcal X^{\prime}\to\Aff^1\times U$ is finite
and surjective.
Hence, the triple
$(\mathcal X^{\prime},f^{\prime},\Delta^{\prime})$ is a nice triple over $U$.
Clearly, the \'{e}tale morphism
$\theta:\mathcal X^{\prime} \to\mathcal X$
is a morphism between the nice triples
$$\theta: (\mathcal X^{\prime},f^{\prime},\Delta^{\prime}) \to (\mathcal X,f,\Delta) \ \ \text{with} \ \ h^{\prime}=1$$

If the triple $(\mathcal X,f,\Delta)$
is a special nice triple over $U$, then
by the remark \ref{NiceToSpecialNice}
the triple
$(\mathcal X^{\prime},f^{\prime},\Delta^{\prime})$
is a special nice triple over $U$.
The assertions (i) to (iii) are proved.

To prove the assertion $(iv)$ recall
that
$\mathcal Z$ is
a semi-local closed subset as in $\mathcal X$, so in $S$.
The equality $f^{\prime}=\theta^{*}(f)$ shows that $\mathcal Z'=\theta^{-1}(\mathcal Z)$.
Since $S'=\theta^{-1}(S)$, hence $\mathcal Z' \subset S'$.
Since $(\mathcal X^{\prime},f^{\prime},\Delta^{\prime})$ is a nice triple,
hence $\mathcal Z'$ is finite over $U$. Thus
$\mathcal Z'$ is closed in $S'$ and semi-local.
Hence the set of closed points of $\mathcal Z^{\prime}$ is contained in the set of closed
points of the scheme $S^{\prime}$.
\end{proof}

\section{Proof of Theorem \ref{ElementaryNisSquare_1_triples} }
\label{SectElemNisnevichSquare}
\begin{proof}[Proof of Theorem \ref{ElementaryNisSquare_1_triples}]
\label{ElementaryNisSquare}
Let $u_1,\ldots,u_n$ be all the closed points of $U$. Let $k(u_i)$ be the residue field of $u_i$. Consider the reduced closed subscheme $\bu$ of $U$,
whose points are $u_1$, \ldots, $u_n$. Thus
\[
 \bu\cong\coprod_i\spec k(u_i).
\]
For any closed point $u \in U$ and any $U$-scheme $V$ let $V_u=u\times_U V$ be
the scheme fibre of the scheme $V$ over the point $u$. Similarly, set
$V_{\bf u}={\bf u}\times_U V$.

Step (i). For any closed point $u \in U$ and any point $z \in \mathcal Z^{\prime}_u$ there is a closed embedding
$z^{(2)} \hra \Aff^1_u$, where
$z^{(2)}:=Spec(\Gamma(\mathcal X^{\prime}_u, \mathcal O_{\mathcal X^{\prime}_u})/\mathfrak m^2_z)$
for the maximal ideal
$\mathfrak m_z \subset \Gamma(\mathcal X^{\prime}_u, \mathcal O_{\mathcal X^{\prime}_u})$
of the point
$z$ regarded as a point of $\mathcal X^{\prime}_u$. This holds, since the $k(u)$-scheme
$\mathcal X^{\prime}_u$
is equi-dimensional of dimension one, affine and $k(u)$-smooth.

Step (ii). For any closed point $u \in U$ there is a closed embedding
$i_u: \coprod_{z \in \mathcal Z^{\prime}_u} z^{(2)} \hra \Aff^1_u$
of the $k(u)$-schemes. To see this apply Step (i) and use that
the triple
$(\mathcal X,f,\Delta)$
satisfies the condition $1^*_U$
from
Definition
\ref{Conditions_1*and2*}.
Set $i_{\bf u}=\sqcup i_u: \sqcup_{u\in {\bf u}} [\coprod_{z \in \mathcal Z^{\prime}_u} z^{(2)}]\hra  \Aff^1_{\bf u}$.

Step(iii) is to introduce some notation.
Since $(\mathcal X^{\prime},f^{\prime},\Delta^{\prime})$ is a nice triple
over $U$ there is a finite surjective morphism
$\mathcal X^{\prime} \xrightarrow{\Pi} \Aff^1\times U$
of the $U$-schemes.
Take the composite
$\mathcal X^{\prime} \xrightarrow{\Pi} \Aff^1\times U \hra \Pro^1\times U$
morphism and denote by
$\bar {\mathcal X^{\prime}}$ the normalization of $\Pro^1\times U$
in the fraction field
$k(\mathcal X^{\prime})$
of the ring
$\Gamma(\mathcal X^{\prime}, \mathcal O_{\mathcal X^{\prime}})$.
The normalization of
$\Aff^1 \times U$ in $k(\mathcal X^{\prime})$
coincides with
the scheme
$\mathcal X^{\prime}$,
since
$\mathcal X^{\prime}$ is a regular scheme.
So, we have a Cartesian diagram of $U$-schemes
\begin{equation}
\label{NormalizationSquare}
    \xymatrix{
     \mathcal X^{\prime}  \ar[rr]^{\inc} \ar[d]_{\Pi} &&
     \bar {\mathcal X^{\prime}} \ar[d]^{\bar \Pi}  &\\
     \Aff^1 \times U \ar[rr]^{\inc} && \Pro^1 \times U, &\\
    }
\end{equation}
in which the horizontal arrows are open embedding.

Let
$\mathcal X^{\prime}_{\infty}$
be the Cartier divisor
$(\bar \Pi)^{-1}(\infty \times U)$
in
$\bar {\mathcal X^{\prime}}$.
Let
$\mathcal L:=\mathcal O_{\bar {\mathcal X^{\prime}}}(\mathcal X^{\prime}_{\infty})$
be the corresponding invertible sheaf and let
$s_0 \in \Gamma(\bar {\mathcal X^{\prime}}, \mathcal L)$
be its section vanishing exactly on
$\mathcal X^{\prime}_{\infty}$.
One has a Cartesian square of $U$-schemes
\begin{equation}
\label{SquareForClosedFibre}
    \xymatrix{
     \mathcal X^{\prime}_{{\infty},{\bf u}}  \ar[rr]^{j_{\infty}} \ar[d]_{in_{\bf u}} &&
     \mathcal X^{\prime}_{\infty} \ar[d]^{in}  &\\
     \bar {\mathcal X^{\prime}_{u,\bf u}} \ar[rr]^{j} && \bar {\mathcal X^{\prime}}, &\\
    }
\end{equation}
which shows that the closed embedding
$in_{\bf u}: \mathcal X^{\prime}_{{\infty},{\bf u}} \hra \bar {\mathcal X^{\prime}_{\bf u}}$
is a Cartier divisor on
$\bar {\mathcal X^{\prime}_{\bf u}}$.
Set
$\mathcal L_{\bf u}=j^*(\mathcal L)$
and
$s_{0,{\bf u}}=s_0|_{\bar {\mathcal X^{\prime}_{\bf u}}} \in \Gamma(\bar {\mathcal X^{\prime}_{\bf u}},\mathcal L_{\bf u})$.

{\bf Step (iii').}
The morphism $\bar \Pi$ is finite surjective. Hence
the morphism
$\bar \Pi|_{\bar {\mathcal X^{\prime}_{\bf u}}}: \bar {\mathcal X^{\prime}_{\bf u}}\to \mathbb P^1_{\bf u}$
is finite surjective.
Hence there is a closed reduced subscheme
$W\subset \bar {\mathcal X^{\prime}_{\bf u}}$
of dimension zero such that
$W\cap \mathcal X^{\prime}_{{\infty}}=\emptyset=W\cap \mathcal Z'$
and $W$ has exactly one point on  each irreducible component of the reduced closed subscheme
$(\mathcal X^{\prime}_{\bf u})_{red}\subset \mathcal X^{\prime}_{\bf u}$.
Let $s_{\infty}: \mathcal O_{\mathcal X^{\prime}_{{\infty}}} \to \mathcal L|_{\mathcal X^{\prime}_{{\infty}}}$
be a trivialization of $\mathcal L|_{\mathcal X^{\prime}_{{\infty}}}$.
Let $t$ be the coordinate function on $\mathbb A^1_{\bf u}$ and
$i_{\bf u}$
be the closed embedding from the Step (ii).
Let $J\subset \mathcal O_{\bar {\mathcal X^{\prime}}}$ be a sheaf of
$\mathcal O_{\bar {\mathcal X^{\prime}}}$ ideals
such that
$$\mathcal O_{\bar {\mathcal X^{\prime}}}/J= \mathcal O_{\mathcal X^{\prime}_{{\infty}}} \times
\mathcal O_{W} \times \prod_{u \in {\bf u}}\prod_{z\in \mathcal Z_u}\mathcal O_{\mathcal X^{\prime}_u}/\mathfrak m^2_z.$$ \\
{\bf Claim.}
There exists an integer $n\geq 0$ and a section
$s_1\in H^0(\bar {\mathcal X^{\prime}}, \mathcal L^{\otimes n})$
such that
$$s_1 \text{mod} \ J=(s^n_{\infty},0,i^*_{\bf u}(t)\cdot s^n_0).$$
Prove the Claim. For the coherent sheaf $J$ on $\bar {\mathcal X^{\prime}}$
there is an integer $n(J)\geq 0$ such that for any integer
$n\geq n(J)$ one has
$$H^1(\bar {\mathcal X^{\prime}},J\otimes \mathcal L^{\otimes n})=H^1(\mathbb P^1_U, (\bar \Pi)_*(J)\otimes \mathcal O(n))=0.$$
Thus the map
$H^0(\bar {\mathcal X^{\prime}}, \mathcal L^{\otimes n})=
H^0(\bar {\mathcal X^{\prime}}, \mathcal L^{\otimes n} \otimes \mathcal O_{\bar {\mathcal X^{\prime}}})\to
H^0(\bar {\mathcal X^{\prime}}, \mathcal L^{\otimes n} \otimes (\mathcal O_{\bar {\mathcal X^{\prime}}}/J)$
is surjective. Whence the Claim.

Step (iv). Set $s_{1,{\bf u}}=s_1|_{ \bar {\mathcal X^{\prime}_{\bf u}}}$. Then
$s_{1,{\bf u}} \in \Gamma(\bar {\mathcal X^{\prime}_{\bf u}},\mathcal L^{\otimes n}_{\bf u})$
has no zeros on
$\mathcal X^{\prime}_{{\infty},{\bf u}}$
and the morphism
$$[s^n_{0,{\bf u}}: s_{1,{\bf u}}]: \bar {\mathcal X^{\prime}_{\bf u}} \to \Pro^1_{\bf u}$$
is finite surjective and it is such that: there is an equality $[s^n_{0,{\bf u}}: s_{1,{\bf u}}]^{-1}(\mathbb A^1_{\bf u})=\mathcal X^{\prime}_{\bf u}$ and \\
(a) the morphism $\sigma_{\bf u}= s_{1,{\bf u}}/s^n_{0,{\bf u}}: \mathcal X^{\prime}_{\bf u} \to \Aff^1_{\bf u}$ is finite surjective,\\
(b) $\sigma_{\bf u}|_{\sqcup_{z\in {\bf u}}[\coprod_{z \in \mathcal Z^{\prime}_u} z^{(2)}]}=i_{\bf u}:
\sqcup_{z\in {\bf u}}[\coprod_{z \in \mathcal Z^{\prime}_u} z^{(2)}] \hra \Aff^1_{\bf u}$,
where $i_{\bf u}$ is
from the step (ii); in particular,
$\sigma_{\bf u}$ is \'{e}tale at every point
$z \in \mathcal Z^{\prime}_{\bf u}$.

Step (v).
For any closed point $u \in U$
one has
$s_1|_{\mathcal X^{\prime}_u}=s_{1,u}$.

Step (vi). If $s_1$ is such as in the step (v), then the morphism
$$\sigma=(s_1/s^n_0, pr_U): \mathcal X^{\prime} \to \Aff^1\times U$$
is finite and surjective. To see this it suffices to check that
the morphism
$([s_0:s_1],pr_U): \bar {\mathcal X^{\prime}}\to \mathbb P^1_U$
is finite surjective and to note that
$[s_0:s_1]^{-1}(\Aff^1)=\mathcal X^{\prime}$.
As we already know the morphism
$[s^n_{0,{\bf u}}: s_{1,{\bf u}}]: \bar {\mathcal X^{\prime}_{\bf u}} \to \Pro^1_{\bf u}$
is finite. Thus the morphism $([s_0:s_1],pr_U)$ is quasi-finite over a neighborhood
of $\Pro^1_{\bf u}$. Since $U$ is semi-local, hence any neighborhood
of $\Pro^1_{\bf u}$ coincides with $\mathbb P^1_U$. Since the morphism
is projective, hence $([s_0:s_1],pr_U)$ is finite and surjective.
Hence, so is the morphism $\sigma$.

{\bf We are ready now to check step by step all the statements of the Theorem.}

{\it The assertion (b)}. Since the schemes $\mathcal X^{\prime}$ and $\Aff^1\times U$
are regular and the morphism $\sigma$ is finite and surjective, the morphism
$\sigma$ is flat by a theorem of Grothendieck
\cite[Thm. 18.17]{E}.

So, to check that $\sigma$ is \'{e}tale at a closed point
$z \in \mathcal Z^{\prime}$
it suffices to check that for the point
$u=q^{\prime}_U(z)$
the morphism
$\sigma_u: \mathcal X^{\prime}_u \to \Aff^1_u$
is \'{e}tale at the point $z$. The latter does hold by
the step (iv), item (b).
Whence $\sigma$ is \'{e}tale at all the closed points of
$\mathcal Z^{\prime}$. By the hypotheses of the Theorem
the set of closed points of
$\Delta^{\prime}(U)$
is contained in the set of the closed points of
$\mathcal Z^{\prime}$.
Whence $\sigma$ is \'{e}tale also at all the closed points of
$\Delta^{\prime}(U)$. The schemes
$\Delta^{\prime}(U)$
and
$\mathcal Z^{\prime}$
are both semi-local.
Thus,
$\sigma$ is \'{e}tale in a neighborhood of
$\mathcal Z^{\prime} \cup \Delta^{\prime}(U)$.

{\it The assertion (a)}.
For any closed point $u \in U$ and
{\bf any point $z \in \mathcal Z^{\prime}_u$ }
the $k(u)$-algebra homomorphism
$\sigma^*_u: k(u)[t] \to k(u)[\mathcal X^{\prime}_u]$
is \'{e}tale at the maximal ideal
$\mathfrak m_z$ of the
$k(u)$-algebra
$k(u)[\mathcal X^{\prime}_u]$
and the composite map
$k(u)[t] \xrightarrow{\sigma^*_u} k(u)[\mathcal X^{\prime}_u] \to k(u)[\mathcal X^{\prime}_u]/\mathfrak m_z$
is an epimorphism. Thus, for any integer $r>0$ the $k(u)$-algebra homomorphism
$k(u)[t] \to k(u)[\mathcal X^{\prime}_u]/\mathfrak m^r_z$
is an epimorphism. The local ring
$\mathcal O_{\mathcal Z^{\prime}_u,z}$
of the scheme
$\mathcal Z^{\prime}_u$
at the point $z$
is of the form
$k(u)[\mathcal X^{\prime}_u]/\mathfrak m^s_z$
for an integer $s$. Thus, the $k(u)$-algebra homomorphism
$$k(u)[t] \xrightarrow{\sigma^*_u} k(u)[\mathcal X^{\prime}_u] \to \mathcal O_{\mathcal Z^{\prime}_u,z}$$
is surjective. Since
$\sigma_u|_{\coprod_{z \in \mathcal Z^{\prime}_u} z^{(2)}}=i_u$
and $i_u$ is a closed embedding one concludes that the
$k(u)$-algebra homomorphism
$$k(u)[t] \to \prod_{z/u}O_{\mathcal Z^{\prime}_u,z}=\Gamma(\mathcal Z^{\prime}_u, \mathcal O_{\mathcal Z^{\prime}_u})$$
is surjective.
Let
${\bf u}=\coprod Spec(k(u))$ regarded as the closed sub-scheme of $U$,
where $u$ runs over all closed points of $U$.
Then, for the scheme
$\mathcal Z^{\prime}_{{\bf u}}={\bf u} \times_U \mathcal Z^{\prime}$
the $k[{\bf u}]$-algebra homomorphism
\begin{equation}
\label{Finiteness}
k[{\bf u}][t] \to \Gamma(\mathcal Z^{\prime}_{{\bf u}}, \mathcal O_{\mathcal Z^{\prime}_{{\bf u}}})
\end{equation}
is surjective.

Since $(\mathcal X^{\prime},f^{\prime},\Delta^{\prime})$ is a nice triple over $U$,
the $\mathcal O$-module
$\Gamma(\mathcal Z^{\prime}, \mathcal O_{\mathcal Z^{\prime}})$
is finite.
Thus, the
$k[{\bf u}]$-module
$\Gamma(\mathcal Z^{\prime}_{{\bf u}}, \mathcal O_{\mathcal Z^{\prime}_{{\bf u}}})$
is finite.
Now the surjectivity of the $k[{\bf u}]$-algebra homomorphism
(\ref{Finiteness})
and the Nakayama lemma show that the $\mathcal O$-algebra homomorphism
$\mathcal O[t] \to \Gamma(\mathcal Z^{\prime}, \mathcal O_{\mathcal Z^{\prime}})$
is surjective. Thus,
$\sigma|_{\mathcal Z^{\prime}}: \mathcal Z^{\prime} \to \Aff^1\times U$
is a closed embedding.

{\it The assertion (e)}.
The morphism
$\Delta^{\prime}$
is a section of the structure morphism
$q^{\prime}_U: \mathcal X^{\prime} \to U$
and the morphism
$\sigma$
is a morphism of the $U$-schemes.
Hence
the composite morphism
$t_0:= \sigma \circ \Delta^{\prime}$
is a section of the projection
$pr_U: \Aff^1\times U \to U$.
This section is defined by an element
$a \in \mathcal O$.
There is another section
$t_1$ of the projection
$pr_U$
defined by
the element
$1-a \in \mathcal O$.
Making an affine change of coordinates on
$\Aff^1_U$ we may and will assume that
$t_0(U)=\{0\} \times U$
and
$t_1(U)=\{1\} \times U$.
{
\bf If the field $k$ is infinite we can choose a non-zero
$\lambda\in k$ such that for $t^{new}_1:=\lambda t_1$ one has:
$\mathcal D^{new}_1\cap \mathcal Z^{\prime}=\emptyset$
and
$t^{new}_0:=\lambda t_0=t_0$.
}

Since $\mathcal D_1$ and $\mathcal Z^{\prime}$ are semi-local, to prove the assertion (e)
it suffices to check that
$\mathcal D_1$ and $\mathcal Z^{\prime}$
have no common closed points.
In the infinite field case this is checked just above.
It remains to check the finite field case.
Let $z \in \mathcal D_1 \cap \mathcal Z^{\prime}$ be a common closed point.
Then
$\sigma(z) \in \{1\} \times U$.
Let $u=q^{\prime}_U(z)$. We already know that
$\sigma|_{\mathcal Z^{\prime}}$
is a closed embedding. Thus
$deg[z:u]=deg[\sigma(z):u]=1$.
The triple
$(\mathcal X,f,\Delta)$
satisfies the condition $2^*_U$
from
Definition
\ref{Conditions_1*and2*}.
Thus, $z=\Delta^{\prime}(u)$.
In this case
$\sigma(z) \in \{0\} \times U$.
But
$\sigma(z) \in \{1\} \times U$.
This is a contradiction.
Whence
$\mathcal D_1 \cap \mathcal Z^{\prime}=\emptyset$.

{\it The assertion (c)}.
The finite morphism
$\sigma$
is \'{e}tale in a neighborhood of
$\mathcal Z^{\prime}$
by the item (b) of the Theorem.
By the item
(a) of the Theorem
$\sigma|_{\mathcal Z^{\prime}}$
is a closed embedding.
Thus, the morphism
$\sigma^{-1}(\sigma(\mathcal Z^{\prime})) \to \sigma(\mathcal Z^{\prime})$
of affine schemes is finite and
there is an affine open sub-scheme
$V$ of the scheme
$\sigma^{-1}(\sigma(\mathcal Z^{\prime}))$
such that the morphism
$V \to \sigma(\mathcal Z^{\prime})$
is \'{e}tale.
Since
$\sigma|_{\mathcal Z^{\prime}}$
is a closed embedding there is a unique section
$s$ of the morphism
$\sigma^{-1}(\sigma(\mathcal Z^{\prime})) \to \sigma(\mathcal Z^{\prime})$
with the image
$\mathcal Z^{\prime}$
and this image is contained in $V$.
By \cite[Lemma 5.3]{OP1} the scheme
$\sigma^{-1}(\sigma(\mathcal Z^{\prime}))$
has the form
$\sigma^{-1}(\sigma(\mathcal Z^{\prime}))=\mathcal Z^{\prime} \coprod \mathcal Z^{\prime\prime}$.\\
By a similar reasoning the scheme
$\sigma^{-1}(\{0\} \times U)$
has the form
$\Delta^{\prime}(U) \coprod \mathcal D$.
The triple
$(\mathcal X,f,\Delta)$
is a special nice triple. Thus
all the closed points of
$\Delta^{\prime}(U)$
are closed points of
$\mathcal Z^{\prime}$
and
$\mathcal Z^{\prime} \cap \mathcal Z^{\prime\prime}=\emptyset$.
Thus,
$\Delta^{\prime}(U) \cap \mathcal Z^{\prime\prime}=\emptyset$.

{\it The assertion (d)}.
It remains to show that
$\mathcal D \cap \mathcal Z^{\prime}=\emptyset$.
Recall that
$\sigma$ is \'{e}tale in a neighborhood of
$\mathcal Z^{\prime} \cup \Delta^{\prime}(U)$.
{\bf
It's easy to check that
$\sigma|_{\mathcal Z^{\prime} \cup \Delta^{\prime}(U)}$
is a closed embedding. Thus arguing as above one gets
a disjoint union decomposition
$$\sigma^{-1}(\sigma(\mathcal Z^{\prime} \cup \Delta^{\prime}(U)))=(\mathcal Z^{\prime} \cup \Delta^{\prime}(U))\sqcup E$$
for a closed subscheme $E$ in $\mathcal X^{\prime}$.
It's checked already that
$\Delta^{\prime}(U) \coprod \mathcal D=\emptyset$.
Thus, $\mathcal D\subset E$.
Hence
$\mathcal D \cap \mathcal Z^{\prime}=\emptyset$
}.

{\it The assertion (f)}.
Recall that
$\mathcal X^{\prime}$ is affine irreducible and regular.
{\bf Thus by Auslander-Goldmann theorem every height one prime ideal is locally principal
and every locally principal ideal
$I\subset \Gamma(\mathcal X^{\prime}, \mathcal O_{\mathcal X^{\prime}})$
is of the form
$I=\mathfrak{q_1}^{a_1}\mathfrak{q_2}^{a_2}...\mathfrak{q_m}^{a_m}$
with integers $a_i\geq 0$.
Moreover, such a presentation of the ideal $I$ is unique.
}
Hence the principal ideal
$(f^{\prime})$ has the form
$\mathfrak p^{r_1}_1\mathfrak p^{r_2}_2\cdots\mathfrak p^{r_n}_n$
with integers $r_i\geq 1$,
where
$\mathfrak p_i$'s
are height one prime ideals in
$k[\mathcal X^{\prime}]:=\Gamma(\mathcal X^{\prime}, \mathcal O_{\mathcal X^{\prime}})$.
Let
$\mathcal Z^{\prime}_i$
be the closed subscheme in
$\mathcal X^{\prime}$
defined by the ideal
$\mathfrak p_i$.
Since $I\subset \mathfrak p_i$, hence
$\mathcal Z^{\prime}_i$ is a closed subscheme in $\mathcal Z'$.

Let
$\mathfrak q_i=\mathcal O[t] \cap \mathfrak p_i$.
The morphism
$\sigma|_{\mathcal Z^{\prime}}: \mathcal Z^{\prime} \to \Aff^1\times U$
is a closed embedding by the item (a) of Theorem
\ref{ElementaryNisSquare_1_triples}.
This yields that
$\sigma|_{\mathcal Z^{\prime}_i}: \mathcal Z^{\prime}_i \to \Aff^1\times U$
is a closed embedding too. Thus
$\mathfrak q_i$
is a hight one prime ideal in
$\mathcal O[t]$.
So, it is a principal prime ideal.
Since
$\mathcal Z^{\prime}$
is finite over
$U$ the scheme
$\mathcal Z^{\prime}_i$
is finite over $U$ too.
Hence the principal prime ideal
$\mathfrak q_i$
is of the form
$(h_i)$ for a unique monic polinomial
$h_i \in \mathcal O[t]$.

Set $h=h^{r_1}_1h^{r_2}_2 \dots h^{r_n}_n$.
Clearly,
$h \in Ker[\mathcal O[t] \xrightarrow{\sigma^*} k[\mathcal X^{\prime}] \xrightarrow{-}
k[\mathcal X^{\prime}]/(f^{\prime})]$.
Since the map
$\mathcal O[t] \xrightarrow{ - \circ \sigma^*} k[\mathcal X^{\prime}]/(f^{\prime})$
is surjective, to prove the assertion (f) it suffices to check that the surjective $\mathcal O$-module homomorphism
$- \circ \sigma^*: \mathcal O[t]/(h) \to k[\mathcal X^{\prime}]/(f^{\prime})$
is an isomorphism.

Consider a decreasing filtration of principal $\mathcal O[t]$-ideals
$$\mathcal O[t]\supset (h_1)\supset ... \supset (h^{r_1}_1)\supset (h^{r_1}_1h_2)\supset ...\supset (h^{r_1}_1h^{r_2}_2)\supset ... \supset (h).$$
Since $\mathcal X^{\prime}$ is an affine scheme there is an element
$g\in k[\mathcal X^{\prime}]$
such that $g|_{\mathcal Z^{\prime\prime}}=0$ and $g|_{\mathcal Z^{\prime}}=1$.
The $\mathcal O[t]$-algebra $k[\mathcal X^{\prime}]$ is flat.
Hence $\mathcal O[t]$-algebra $k[\mathcal X^{\prime}_g]$ is flat.
Thus taking the tensor product of the latter filtration with
the $\mathcal O[t]$-algebra $k[\mathcal X^{\prime}_g]$ we get a decreasing filtration by principal
$k[\mathcal X^{\prime}_g]$-ideals
$$k[\mathcal X^{\prime}_g]\supset h_1\cdot k[\mathcal X^{\prime}_g]\supset ... \supset h\cdot k[\mathcal X^{\prime}_g].$$

The $\mathcal O[t]$-algebra map $\mathcal O[t] \to k[\mathcal X^{\prime}_g]$
induces the map of the filtered $\mathcal O[t]$-modules
$$\mathcal O[t]/(h) \to k[\mathcal X^{\prime}_g]/h\cdot k[\mathcal X^{\prime}_g].$$
It sufficient to check that the induced map on the adjoint graded
$\mathcal O[t]$-modules is an isomorphism. Graded summands for the first
filtration are isomorphic to one of the $\mathcal O[t]$-module
$\mathcal O[t]/(h_i)$.
Graded summands for the second
filtration are isomorphic to one of the $\mathcal O[t]$-module
$k[\mathcal X^{\prime}_g]/(\mathfrak p_i)_g$.
Moreover the map between the corresponding graded summands is the
map of the $\mathcal O[t]$-modules
$\mathcal O[t]/(h_i)\to k[\mathcal X^{\prime}_g]/(\mathfrak p_i)_g$
induced by the scalor extension from $\mathcal O[t]$ to $k[\mathcal X^{\prime}_g]$.
It remains to show that for any $i$ the map of the $\mathcal O$-modules
$\mathcal O[t]/(h_i)\to k[\mathcal X^{\prime}_g]/(\mathfrak p_i)_g$
is an isomorphism.
To show this note that the composite map
$$\mathcal O[t] \xrightarrow{- \circ \sigma^*} k[\mathcal X^{\prime}]/(f^{\prime}) \to
k[\mathcal X^{\prime}]/\mathfrak p_i$$
is an $\mathcal O[t]$-algebra epimorphism (already $\sigma^*$ is an epimorphism). The kernel of the epimorphism
$- \circ \sigma^*$
is the ideal
$\mathfrak q_i=(h_i)$.
Thus $\mathcal O[t]/(h_i)=k[\mathcal X^{\prime}]/\mathfrak p_i$.

Since $g\equiv1 \ \text{mod} (f^{\prime})$ one has equalities
$k[\mathcal X^{\prime}]/(f^{\prime})=k[\mathcal X^{\prime}_g]/f^{\prime}\cdot k[\mathcal X^{\prime}_g]$,
$k[\mathcal X^{\prime}]/\mathfrak p_i=k[\mathcal X^{\prime}_g]/(\mathfrak p_i)_g$.
Hence $\mathcal O[t]/(h_i)=k[\mathcal X^{\prime}]/\mathfrak p_i=k[\mathcal X^{\prime}_g]/(\mathfrak p_i)_g$.

The assertion (f) is proved. Whence the Theorem.
\end{proof}

\section{Proof of Theorems \ref{ThEquatingGroups_1}}
\label{Proof_of Theorem equating3_triples}
Let $k$ be a field.
Let $U$ be as in Definition
\ref{DefnNiceTriple}.
Let $S^{\prime}$ be an irreducible regular semi-local scheme over $k$ and
$p: S^{\prime} \to U$ be a $k$-morphism.
Let
$T^{\prime}\hookrightarrow S^{\prime}$ be a closed sub-scheme of $S^{\prime}$
such that the restriction
$p|_{T^{\prime}}: T^{\prime} \to U$ is an isomorphism. Let $\delta: U\to T^{\prime}$
be the inverse to $p|_{T^{\prime}}$.
We will assume below that $dim(T^{\prime}) < dim(S^{\prime})$,
where $dim$ is the Krull dimension.
For any closed point $u \in U$ and any $U$-scheme $V$ let $V_u=u\times_U V$ be
the fibre of the scheme $V$ over the point $u$.
For a finite set $A$ denote $\sharp A$ the cardinality of $A$.

\begin{lem}
\label{SmallAmountOfPoints}
If the field $k$ is finite and all the closed points of $S^{\prime}$ have {\bf finite residue fields}.
{\bf Suppose that for any closed point $u\in U$ the scheme $S^{\prime}_u$ is a semi-local Dedekind scheme.
}
Then there exists a finite \'{e}tale morphism
$\rho: S^{\prime\prime} \to S^{\prime}$ (with an irreducible scheme $S^{\prime\prime}$)
and a section
$\delta^{\prime}: T^{\prime} \to S^{\prime\prime}$
of $\rho$ over $T^{\prime}$
such that the following holds
\begin{itemize}
\item[\rm{(1)}]
for any closed point $u \in U$ let $u^{\prime} \in T^{\prime}$ be a unique point such that
$p(u^{\prime})=u$, then the point
$\delta^{\prime}(u^{\prime}) \in S^{\prime\prime}_u$ is the only
$k(u)$-rational point of $S^{\prime\prime}_u$,
\item[\rm{(2)}]
for any closed point $u \in U$ and any integer $d \geq 1$
one has
$$\sharp\{z \in S^{\prime\prime}_u| [k(z):k(u)]=d \} \leq \ \sharp\{x \in \Aff^1_u| [k(x):k(u)]=d \}$$
\end{itemize}
If the field $k$ is infinite, then set
$S^{\prime\prime}=S^{\prime}$,
$\rho=id$,
and
$\delta^{\prime}=i$.
\end{lem}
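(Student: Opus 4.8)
The case of infinite $k$ requires nothing: one takes $S''=S'$, $\rho=\id$, and $\delta'=i$ the inclusion of $T'$, and conditions (1)--(2) are not part of the claim in that case, since by Definition~\ref{Conditions_1*and2*} they enter only when $k$ is finite. So assume henceforth that $k$ is finite; the plan is to realise $\rho\colon S''\to S'$ as a finite étale cover with prescribed splitting behaviour over the finitely many relevant closed points and then read off (1) and (2) from that behaviour.

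Set up the bookkeeping first. Let $u_1,\dots,u_m$ be the closed points of $U$, put $q_i=\sharp k(u_i)$, and let $s_i:=\delta(u_i)\in T'$, a $k(u_i)$-rational closed point of the semi-local Dedekind scheme $S'_{u_i}$, which has only finitely many closed points. For a finite étale $\rho$, the fibre over a closed point $s$ of $S'_{u_i}$ with residue field $k(s)$ is the spectrum of an étale $k(s)$-algebra $\prod_j k(s)_j$, and the points of $S''_{u_i}$ over $s$ have residue degrees $[k(s)_j:k(u_i)]$ over $k(u_i)$. Hence (1) is equivalent to: the fibre of $\rho$ over $s_i$ has exactly one $k(u_i)$-point and over every other $k(u_i)$-rational closed point of $S'_{u_i}$ it has none; and (2) is a bound on the number of points of each given residue degree lying over the closed points of $S'_{u_i}$. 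Here one uses the classical count that $\Aff^1_{k(u_i)}$ has $\tfrac1d\sum_{e\mid d}\mu(e)q_i^{d/e}\ge1$ closed points of degree $d$ for every $d\ge1$, a number that moreover tends to infinity with $d$; so the right-hand side of the inequality in (2) leaves ample room to push points into high degrees, provided the degree of the cover is chosen large enough relative to the $q_i$ and the bounded number of closed points of the $S'_{u_i}$.

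Now construct the cover. Write $S'=\Spec A$ with $A$ regular semi-local and affine, let $I\subset A$ be the ideal of $T'$, so $A/I\cong\mathcal O$, and recall that $T'\cong U$ is connected since $X$ is irreducible. Fix a suitable (large) integer $N$. One wants $\rho^{-1}(T')=\delta'(T')\sqcup C$ with $\delta'(T')\to T'$ an isomorphism and $C\to T'$ a finite étale cover of degree $N-1$ having no $k(u_i)$-point over any $u_i$; such a $C$ exists because $\mathcal O^{\times}\to\prod_i k(u_i)^{\times}$ is surjective on the semi-local ring $\mathcal O$ (resp., in characteristic $2$, because $\mathcal O\to\prod_i k(u_i)$ is surjective, used via Artin--Schreier), so one may, for instance, take $C$ to be several copies of a quadratic cover that is non-split over each $u_i$. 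Simultaneously, at each of the finitely many remaining closed points of the $S'_{u_i}$ one prescribes a splitting type of $\rho$ with no factor of residue degree one over $k(u_i)$ --- except over $s_i$, where a single such factor is allowed --- chosen so that the accumulated residue-degree multiset over each $u_i$ still obeys the bound in (2); by the previous paragraph this is possible once $N$ is large. Finally one invokes a realisation theorem for finite étale covers of the affine semi-local regular scheme $S'$ with prescribed local behaviour at finitely many closed points and a trivial component over the connected closed subscheme $T'$: this is the point where a Bertini-type theorem over a finite field in the style of Poonen is applied to $S'$ (or, since $S'$ is semi-local, a direct approximation using global functions in $I$), and in addition one can arrange $S''$ to be irreducible, because $\dim T'<\dim S'$ and the prescribed points are few, so connectedness of $S''$ does not constrain the prescribed data.

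The main obstacle is to achieve all of this with one and the same cover $\rho\colon S''\to S'$: an irreducible $S''$ that (i) carries a section over the connected scheme $T'$, which forces at least one $k(u_i)$-rational point in every fibre $S''_{u_i}$, while (ii) still satisfying condition (1), which forbids any further $k(u_i)$-rational point there, and (iii) keeping the residue-degree multiset of each fibre dominated by that of $\Aff^1_{k(u_i)}$. Requirements (i) and (ii) pull against each other, and the tension is tightest over the smallest residue fields --- e.g.\ $\mathbb F_2$, where $\Aff^1$ has only two rational and one quadratic closed point --- so the degree $N$ and the prescribed splitting types must be balanced carefully against the closed-point counting function of $\Aff^1$ over each $k(u_i)$. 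That balancing, together with the realisation of the prescribed étale cover over the affine semi-local $S'$ and the irreducibility of $S''$, is the technical core of the argument.
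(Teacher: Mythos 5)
Your write-up correctly identifies the central tension: the section over $T'$ forces a rational point in each fibre $S''_{u_i}$, condition (1) forbids any others, and condition (2) bounds the residue-degree multiset by that of $\Aff^1_{k(u_i)}$, which is very tight over small fields like $\mathbb F_2$. But the proposal then defers the construction to an unnamed "realisation theorem for finite \'etale covers of $S'$ with prescribed local behaviour," invokes Poonen's Bertini theorems, and says the degree multisets should be "chosen so that the accumulated multiset obeys the bound" without giving a mechanism. That is precisely the content of the lemma, and it cannot be black-boxed: there is no off-the-shelf theorem producing an irreducible finite \'etale cover of an affine regular semi-local scheme with prescribed fibres over finitely many closed points and a section over $T'$. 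Moreover, Poonen's results are used in this paper only in Appendix B (to produce elementary fibrations over finite fields); the proof of the present lemma is purely algebraic and uses neither Bertini nor density arguments. Your suggestion of stacking "several copies of a quadratic cover non-split over each $u_i$" is also the wrong shape: it would pile up many degree-$2$ points in the fibre, violating (2) over $\mathbb F_2$ (where $\Aff^1$ has only a single closed point of degree $2$).

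The idea the paper actually uses, and which your proposal is missing, is the following. Pick one very large prime $q$ (co-prime to the characteristic and to all the residue degrees occurring among the closed points of $S'$) and construct $S''$ so that, fibre by fibre, the points lying over the closed points of $S'_u$ other than $\delta(u)$ have residue degree divisible by $q$ (one tensors that part of the fibre with the field $k(q)$), while over $\delta(u)$ the fibre consists of one rational point together with one point of degree $q-1$. This makes all unwanted points have distinct large degrees of the form $dq$ or $q-1$, and the growth estimate $\mathrm{Irr}(dq)\ge(c^q-c)/q\gg0$ for $q\gg 0$ (Lemma \ref{l:F1F2_preliminary}) guarantees the counting inequality (2). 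Condition (1) is immediate because everything except $\delta(u)$ now has residue degree $>1$. Crucially, the cover is produced by a completely explicit algebraic construction, not a realisation theorem: one writes down the desired \'etale algebra over the closed fibre as $l[V']\otimes_k k(q)\times\bigl(l[v_0]\times l(q-1)\bigr)$, expresses it as $l[V'][t]/(f)\times l[t]/(t\cdot g)$ for a monic $f$ of degree $q$ and an irreducible monic $g$ of degree $q-1$, lifts $t\cdot g$ to a monic polynomial in $k[T'][t]$, and then lifts the pair to a single monic $F(t)\in k[S'][t]$ of degree $q$, using that $S'$ is affine semi-local and $T'\hookrightarrow S'$ is closed. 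Then $S'':=\Spec(k[S'][t]/(F))$ is the desired cover, and Nakayama gives that $\rho^{-1}(T')=T'\sqcup\Spec(k[T'][t]/(\tilde g))$ with the first factor realising $\delta'$. This polynomial lift, together with the choice of a single large prime $q$ and the arithmetic of $\mathrm{Irr}(dq)$, is the core of the argument and is absent from your proposal.
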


\begin{proof}
The proof is given at the very end of the Appendix A.
\end{proof}

\begin{proof}[Proof of Theorems \ref{ThEquatingGroups_1}]
Since $(\mathcal X,f,\Delta)$ is a special nice triple
over $U$, there is a finite surjective $U$-morphism $\Pi: \mathcal X \to \mathbb A^1_U$.
Applying to the this nice triple and to the morphism $\Pi$
the first part of the construction \ref{new_nice_triple}
we get the data
$(\mathcal Z,\mathcal Y, S, T)$.

Let $p=q_U|_{S}: S \to U$ and $\delta=\Delta: U\to S$.
Applying the lemma \ref{SmallAmountOfPoints} to $S'=S$, $T'=T$ and $\delta$
we get
$S^{\prime\prime}$,
$\rho: S^{\prime\prime}\to S$,
and
$\delta': T \to S^{\prime\prime}$
subjecting to the conditions $(1)$ and $(2)$ from
the lemma \ref{SmallAmountOfPoints}.
Recall that
$\rho: S^{\prime\prime} \to S$ is a finite \'{e}tale morphism
(with an irreducible scheme $S^{\prime\prime}$)
and
$\delta^{\prime}$
is a section of
$\rho$
over
$T\subseteq S$.

Applying the second part of the construction \ref{new_nice_triple}
and also the proposition \ref{prop:refining_triples}
to the special nice triple $(\mathcal X,f,\Delta)$, the finite surjective  morphism
$\Pi$
and to the finite
\'etale morphism $\rho$ and to its section
$\delta$
over $T$
we get \\
(i') the nice triple $(\mathcal X^{\prime},f^{\prime},\Delta^{\prime})$ over $U$;\\
(ii') the morphism $\theta: (\mathcal X^{\prime},f^{\prime},\Delta^{\prime}) \to (\mathcal X,f,\Delta)$
between the special nice triples;\\
(iii') the equality $f^{\prime}=(\theta)^*(f)$;\\
(iv') the vanishing locus
$\mathcal Z^{\prime}$ of $f^{\prime}$
on
$\mathcal X^{\prime}$
such that its set of closed points is contained in the set of closed
points of the subscheme $S^{\prime}$.


The properties
$(1)$ and $(2)$
of the
$U$-scheme
$S^{\prime\prime}$
show that
the conditions
$(1^*)$ and $(2^*)$
from the second assertion of
the theorem
\ref{ThEquatingGroups_1}
are full filled
for the closed sub-scheme
$\mathcal Z^{\prime\prime}$ of $\mathcal X^{\prime\prime}$
defined by $\{f^{\prime\prime}=0\}$.
That follows from the property (iv')
mentioned just above.

The property (ii) shows that the triple
$(\mathcal X^{\prime\prime},f^{\prime\prime},\Delta^{\prime\prime}) \to (\mathcal X^{\prime},f^{\prime},\Delta^{\prime})$
is a special nice triple.
This completes the proof of the theorem.
\end{proof}

\section{Theorems \ref{ElementaryNisSquare_1} and proof of Theorem \ref{MajorIntrod}}
\label{Reducing Theorem_MainHomotopy}
Theorem \ref{ElementaryNisSquare_1} is a purely geometric one. If the base field $k$
is finite, say, of two elements, if the closed point of $U$ is $k$-rational and the scheme $\mathcal Z'$ below
is such that its closed fibre $\mathcal Z'_u$ contains three $k$-rational points,
then there are no closed embedding $\mathcal Z'$ into $\Aff^1 \times U$.
So, one of the main problem in the proof of the next theorem is to find
such an $\mathcal X$, a morphism $q_X$ and a function $f^{\prime}$
to overcome the mentioned difficulties.
\begin{thm}
\label{ElementaryNisSquare_1}
Let $X$ be an affine $k$-smooth irreducible $k$-variety, and let $x_1,x_2,\dots,x_n$ be closed points in $X$.
Let $U=Spec(\mathcal O_{X,\{x_1,x_2,\dots,x_n\}})$.
Given a non-zero function $\textrm{f}\in k[X]$ vanishing at each point $x_i$,
there is a diagram of the form
\begin{equation}
\label{DeformationDiagram0}
    \xymatrix{
\Aff^1 \times U\ar[drr]_{\pr_U}&&\mathcal X \ar[d]^{}
\ar[ll]_{\sigma}\ar[d]_{q_U}
\ar[rr]^{q_X}&&X &\\
&&U \ar[urr]_{\can}\ar@/_0.8pc/[u]_{\Delta} &\\
    }
\end{equation}
with an irreducible {\bf affine} scheme $\mathcal X$, a smooth morphism $q_U$, a finite surjective $U$-morphism $\sigma$ and an essentially smooth morphism $q_X$,
and a function $f^{\prime} \in q^*_X(\textrm{f} \ )k[\mathcal X]$,
which enjoys the following properties:
\begin{itemize}
\item[\rm{(a)}]
if
$\mathcal Z^{\prime}$ is the closed subscheme of $\mathcal X$ defined by the principal ideal
$(f^{\prime})$, the morphism
$\sigma|_{\mathcal Z^{\prime}}: \mathcal Z^{\prime} \to \Aff^1\times U$
is a closed embedding and the morphism
$q_U|_{\mathcal Z^{\prime}}: \mathcal Z^{\prime} \to U$ is finite;
\item[\rm{(a')}] $q_U\circ \Delta=id_U$ and $q_X\circ \Delta=can$ and $\sigma\circ \Delta=i_0$ \\
(the first equality shows that $\Delta(U)$ is a closed subscheme in $\mathcal X$);
\item[\rm{(b)}] $\sigma$
is \'{e}tale in a neighborhood of
$\mathcal Z^{\prime}\cup \Delta(U)$;
\item[\rm{(c)}]
$\sigma^{-1}(\sigma(\mathcal Z^{\prime}))=\mathcal Z^{\prime}\coprod \mathcal Z^{\prime\prime}$
scheme theoretically
for some closed subscheme $\mathcal Z^{\prime\prime}$
\\ and
$\mathcal Z^{\prime\prime} \cap \Delta(U)=\emptyset$;
\item[\rm{(d)}]
$\mathcal D_0:=\sigma^{-1}(\{0\} \times U)=\Delta(U)\coprod \mathcal D^{\prime}_0$
scheme theoretically
for some closed subscheme $\mathcal D^{\prime}_0$
and $\mathcal D^{\prime}_0 \cap \mathcal Z^{\prime}=\emptyset$;
\item[\rm{(e)}]
for $\mathcal D_1:=\sigma^{-1}(\{1\} \times U)$ one has
$\mathcal D_1 \cap \mathcal Z^{\prime}=\emptyset$.
\item[\rm{(f)}]
there is a monic polinomial
$h \in \mathcal O[t]$
such that
$(h)=Ker[\mathcal O[t] \xrightarrow{\sigma^*} k[\mathcal X] \xrightarrow{-} k[\mathcal X]/(f^{\prime})]$, \\
where $\mathcal O:=k[U]$ and the map bar takes any $g\in k[\mathcal X]$ to ${\bar g}\in k[\mathcal X]/(f^{\prime})$.
\end{itemize}
\end{thm}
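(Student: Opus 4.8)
The plan is to obtain Theorem~\ref{ElementaryNisSquare_1} by chaining together Proposition~\ref{BasicTripleProp_1}, Theorem~\ref{ThEquatingGroups_1} and Theorem~\ref{ElementaryNisSquare_1_triples}, and then performing the bookkeeping that translates the language of nice triples back into the diagram \eqref{DeformationDiagram0}.

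First I would invoke Proposition~\ref{BasicTripleProp_1}: after shrinking $X$ around $\{x_1,\dots,x_n\}$ (which does not change $U$), it yields a special nice triple $(q_U\colon \mathcal X_0\to U,\ \Delta_0,\ f_0)$ over $U$ together with an essentially smooth morphism $q_{X,0}\colon \mathcal X_0\to X$ such that $q_{X,0}\circ\Delta_0=\can$ and $f_0=q_{X,0}^{*}(\mathrm{f})$. Next I would apply Theorem~\ref{ThEquatingGroups_1} to this special nice triple, obtaining a morphism of nice triples $\theta\colon(\mathcal X',f',\Delta')\to(\mathcal X_0,f_0,\Delta_0)$ over $U$ with $(\mathcal X',f',\Delta')$ again \emph{special} and moreover satisfying conditions $(1^{*})$ and $(2^{*})$ of Definition~\ref{Conditions_1*and2*}. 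Since $\theta$ is \'etale, the composite $q_X:=q_{X,0}\circ\theta\colon\mathcal X'\to X$ is essentially smooth and $q_X\circ\Delta'=\can$; and by condition~(2) of Definition~\ref{DefnMorphismNiceTriple} we have $f'=\theta^{*}(f_0)\cdot h'=q_X^{*}(\mathrm{f})\cdot h'$, so $f'\in q_X^{*}(\mathrm{f})\,k[\mathcal X']$ as required. Recall also that $\mathcal X'$ is irreducible and affine (Remark~\ref{rem: NiceTriples}), and that $q_U':=q_U\circ\theta$ is smooth with section $\Delta'$.

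Then I would apply Theorem~\ref{ElementaryNisSquare_1_triples} to $(\mathcal X',f',\Delta')$ — legitimate precisely because $(1^{*})$ and $(2^{*})$ now hold — to get a finite surjective $U$-morphism $\sigma\colon\mathcal X'\to\Aff^1\times U$ satisfying (a)--(f) of that theorem, after the harmless affine change of coordinates on $\Aff^1\times U$ carried out in its proof (which arranges $\sigma\circ\Delta'$ to land in $\{0\}\times U$ and $\mathcal D_1$ to be taken over $\{1\}\times U$). Renaming $(\mathcal X',f',\Delta',q_U',q_X)$ as $(\mathcal X,f',\Delta,q_U,q_X)$, the diagram \eqref{DeformationDiagram0} is assembled, and it remains only to read off (a)--(f). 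For (a): $\sigma|_{\mathcal Z'}$ is a closed embedding by \ref{ElementaryNisSquare_1_triples}(a), and $q_U|_{\mathcal Z'}$ is finite because the nice-triple condition~(b) makes $k[\mathcal X]/(f')$ a finite $\mathcal O$-module. For (a'): $q_U\circ\Delta=\id_U$ since $\Delta$ is a section; $q_X\circ\Delta=\can$ was noted above; and $\sigma\circ\Delta$ is a section of $\pr_U$ landing in $\{0\}\times U$, hence equals $i_0$. Properties (b), (c), (d) (with $\mathcal D=\mathcal D'_0$), (e) and (f) are literally \ref{ElementaryNisSquare_1_triples}(b)--(f).

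The substantive work is already hidden in the three cited results, so the \textbf{main obstacle} is not in this assembly but in the step that makes it possible over a finite base field: property (a) — a closed embedding $\mathcal Z'\hookrightarrow\Aff^1\times U$ — simply fails for the triple coming out of Proposition~\ref{BasicTripleProp_1} (as the two-element-field example with three rational points on $\mathcal Z'_u$ shows), so one genuinely needs Theorem~\ref{ThEquatingGroups_1}, and behind it the construction of Section~\ref{OjPanConstruction} together with Lemma~\ref{SmallAmountOfPoints}, to replace the given triple by an \'etale-refined one whose vanishing locus has, fibrewise over each closed point of $U$, at most as many points of each residue degree as $\Aff^1$ does and exactly one rational point. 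Granting that, the only things to watch in the assembly are that the \'etale morphism $\theta$ preserves the membership $f'\in q_X^{*}(\mathrm{f})\,k[\mathcal X]$ and the essential smoothness of $q_X$, both of which are immediate.
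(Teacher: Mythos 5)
Your proposal follows the paper's own proof exactly: it chains Proposition~\ref{BasicTripleProp_1}, Theorem~\ref{ThEquatingGroups_1} and Theorem~\ref{ElementaryNisSquare_1_triples} in the same order and then assembles diagram~\eqref{DeformationDiagram0} by composing with~$\theta$. The extra bookkeeping you supply — tracking $f'\in q_X^*(\mathrm{f})k[\mathcal X']$ via Definition~\ref{DefnMorphismNiceTriple}(2), deriving $q_X\circ\Delta'=\can$ from $\theta\circ\Delta'=\Delta_0$, and reading off $\sigma\circ\Delta=i_0$ from item~(d) after the affine coordinate change — is all correct and only fleshes out steps the paper leaves implicit.
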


\begin{proof}[Proof of Theorem \ref{ElementaryNisSquare_1}]
By Proposition \ref{BasicTripleProp_1} one can shrink $X$ such that
$x_1,x_2, \dots , x_n$ are still in $X$ and $X$ is affine, and then to construct a special nice triple
$(q_U: \mathcal X \to U, \Delta, f)$ over $U$ and an essentially smooth morphism $q_X: \mathcal X \to X$ such that
$q_X \circ \Delta= can$, $f=q^*_X(\text{f})$ and the set of closed points of $\Delta(U)$ is
contained in the set of closed points of $\{f=0\}$.

By Theorem
\ref{ThEquatingGroups_1}
there exists a morphism
$\theta:(\mathcal X^{\prime},f^{\prime},\Delta^{\prime})\to(\mathcal X,f,\Delta)$
such that the triple
$(\mathcal X^{\prime},f^{\prime},\Delta^{\prime})$
is a special nice triple
over $U$
subject to the conditions
$(1^*)$ and $(2^*)$ from Definition
\ref{Conditions_1*and2*}.

The triple
$(\mathcal X^{\prime},f^{\prime},\Delta^{\prime})$
is a special nice triple
{\bf over} $U$
subject to the conditions
$(1^*)$ and $(2^*)$ from Definition
\ref{Conditions_1*and2*}.
Thus by Theorem
\ref{ElementaryNisSquare_1_triples}
there is a finite surjective morphism
$\Aff^1\times U \xleftarrow{\sigma} \mathcal X^{\prime}$
of the $U$-schemes satisfying the conditions
$(a)$ to $(\textrm{f})$
from that Theorem. Hence one has a diagram of the form
\begin{equation}
\label{DeformationDiagram0_1}
    \xymatrix{
\Aff^1 \times U\ar[drr]_{\pr_U}&&\mathcal X^{\prime} \ar[d]^{}
\ar[ll]_{\sigma}\ar[d]_{q_U\circ \theta}
\ar[rr]^{q_X\circ \theta}&&X &\\
&&U \ar[urr]_{\can}\ar@/_0.8pc/[u]_{\Delta^{\prime}} &\\
    }
\end{equation}
with the irreducible scheme $\mathcal X^{\prime}$, the smooth morphism $q_U\circ \theta$,
the finite surjective morphism $\sigma$ and the essentially smooth morphism $q_X\circ \theta$
and with the function
$f^{\prime} \in (q_X\circ \theta)^*(\textrm{f})k[\mathcal X^{\prime}]$,
which after identifying notation enjoy the properties
(a) to (\textrm{f}) from Theorem \ref{ElementaryNisSquare_1}.
Whence
the Theorem \ref{ElementaryNisSquare_1}.
\end{proof}

To formulate a first consequence of the theorem \ref{ElementaryNisSquare_1} (see Corollary \ref{ElementaryNisSquareNew_1}).
note that using the items (b) and (c) of Theorem
\ref{ElementaryNisSquare_1}
one can find an element
$g \in I(\mathcal Z^{\prime\prime})$
such that \\
(1) $(f^{\prime})+(g)=\Gamma(\mathcal X, \mathcal O_{\mathcal X})$, \\
(2) $Ker((\Delta)^*)+(g)=\Gamma(\mathcal X, \mathcal O_{\mathcal X})$, \\
(3) $\sigma_g=\sigma|_{\mathcal X_g}: \mathcal X_g \to \Aff^1_U$ is \'{e}tale.\\



\begin{cor}[Corollary of Theorem \ref{ElementaryNisSquare_1}]
\label{ElementaryNisSquareNew_1}
The function $f^{\prime}$ from Theorem \ref{ElementaryNisSquare_1}, the polinomial $h$ from the item $(\textrm{f} \ )$
of that Theorem, the morphism $\sigma: \mathcal X \to \Aff^1_U$
and the function
$g \in \Gamma(\mathcal X,\mathcal O_{\mathcal X} )$
defined just above
enjoy the following properties:
\begin{itemize}
\item[\rm{(i)}]
the morphism
$\sigma_g= \sigma|_{\mathcal X_g}: \mathcal X_g \to \Aff^1\times U $
is \'{e}tale,
\item[\rm{(ii)}]
data
$ (\mathcal O[t],\sigma^*_g: \mathcal O[t] \to \Gamma(\mathcal X,\mathcal O_{\mathcal X})_g, h ) $
satisfies the hypotheses of
\cite[Prop.2.6]{C-TO},
i.e.
$\Gamma(\mathcal X,\mathcal O_{\mathcal X} )_g$
is a finitely generated
$\mathcal O[t]$-algebra, the element $(\sigma_g)^*(h)$
is not a zero-divisor in
$\Gamma(\mathcal X,\mathcal O_{\mathcal X} )_g$
and
$\mathcal O[t]/(h)=\Gamma(\mathcal X,\mathcal O_{\mathcal X})_g/h\Gamma(\mathcal X,\mathcal O_{\mathcal X})_g$ \ ,
\item[\rm{(iii)}]
$(\Delta(U) \cup \mathcal Z') \subset \mathcal X_g$ \ and $\sigma_g \circ \Delta=i_0: U\to \Aff^1\times U$,
\item[\rm{(iv)}]
$\mathcal X_{gh} \subseteq \mathcal X_{gf^{\prime}}\subseteq \mathcal X_{f^{\prime}}\subseteq \mathcal X_{q^*_X(\textrm{f})}$ \ ,
\item[\rm{(v)}]
$\mathcal O[t]/(h)=\Gamma(\mathcal X,\mathcal O_{\mathcal X})/(f^{\prime})$
and
$h\Gamma(\mathcal X,\mathcal O_{\mathcal X})=(f^{\prime})\cap I(\mathcal Z^{\prime\prime})$
and
$(f^{\prime}) +I(\mathcal Z^{\prime\prime})=\Gamma(\mathcal X,\mathcal O_{\mathcal X})$.
\end{itemize}
\end{cor}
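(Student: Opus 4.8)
The plan is to read off the four properties of Corollary~\ref{ElementaryNisSquareNew_1} directly from Theorem~\ref{ElementaryNisSquare_1} together with the choice of $g$ made just before the statement; no new geometry is needed. Recall that by construction $g\in I(\mathcal Z^{\prime\prime})$ satisfies $(f')+(g)=k[\mathcal X]$, $Ker(\Delta^*)+(g)=k[\mathcal X]$, and $\sigma_g=\sigma|_{\mathcal X_g}$ is \'etale. Property (i) is literally the third of these. For property (iii), note that $(f')+(g)=k[\mathcal X]$ forces $\mathcal Z'=\{f'=0\}$ to lie in $\mathcal X_g=\{g\neq 0\}$, and $Ker(\Delta^*)+(g)=k[\mathcal X]$ forces $\Delta(U)$ to lie in $\mathcal X_g$; the equality $\sigma_g\circ\Delta=i_0$ is just the restriction of $\sigma\circ\Delta=i_0$ from item (a$'$) of Theorem~\ref{ElementaryNisSquare_1}. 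Property (iv) is a chain of inclusions of principal open subsets: $\mathcal X_{gh}\subseteq\mathcal X_{gf'}$ because $h$ and $f'$ generate the same radical on $\mathcal Z'$'s support — more precisely, by item (f) the element $\sigma^*(h)=h\circ\sigma$ lies in the radical of $(f')$, hence $\mathcal X_{gh}\subseteq\mathcal X_{f'}$, and combined with $\mathcal X_{gh}\subseteq\mathcal X_g$ one gets $\mathcal X_{gh}\subseteq\mathcal X_{gf'}$; then $\mathcal X_{gf'}\subseteq\mathcal X_{f'}$ trivially, and $\mathcal X_{f'}\subseteq\mathcal X_{q_X^*(\textrm{f})}$ because $f'\in q_X^*(\textrm{f})\,k[\mathcal X]$.

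For property (v), the first equality $\mathcal O[t]/(h)=k[\mathcal X]/(f')$ is exactly what the map $-\circ\sigma^*$ of item (f) of Theorem~\ref{ElementaryNisSquare_1} delivers: it is a surjection with kernel $(h)$ onto $k[\mathcal X]/(f')$, and one only has to observe that surjectivity plus the kernel computation gives the isomorphism. For the primary-decomposition statement $h\,k[\mathcal X]=(f')\cap I(\mathcal Z^{\prime\prime})$, I would argue as follows: by item (c) of Theorem~\ref{ElementaryNisSquare_1} we have $\sigma^{-1}(\sigma(\mathcal Z'))=\mathcal Z'\amalg\mathcal Z^{\prime\prime}$ scheme-theoretically, so $I(\sigma^{-1}(\sigma(\mathcal Z')))=(f')\cap I(\mathcal Z^{\prime\prime})$; on the other hand $\sigma|_{\mathcal Z'}$ is a closed embedding (item (a)) with image defined by the ideal $(h)\subseteq\mathcal O[t]$ (item (f), after identifying $\mathcal Z'$ with $\operatorname{Spec}\mathcal O[t]/(h)$), and $\sigma$ being \'etale near $\mathcal Z'$ with $\sigma|_{\mathcal Z'}$ a closed embedding means $\sigma^*(h)\cdot k[\mathcal X]$ cuts out precisely $\sigma^{-1}(\sigma(\mathcal Z'))$ on the \'etale locus. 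The last clause $(f')+I(\mathcal Z^{\prime\prime})=k[\mathcal X]$ is the statement that $\mathcal Z'\cap\mathcal Z^{\prime\prime}=\emptyset$, which is part of item (c).

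Property (ii) is the one requiring a genuine (if short) verification, and I expect it to be the main point. One must check the three hypotheses of \cite[Prop.~2.6]{C-TO} for the data $(\mathcal O[t],\ \sigma_g^*\colon\mathcal O[t]\to k[\mathcal X]_g,\ h)$. Finite generation of $k[\mathcal X]_g$ as an $\mathcal O[t]$-algebra follows from $\sigma$ being finite (hence $k[\mathcal X]$ is a finite $\mathcal O[t]$-module) and localization being a finite-type operation. The equality $\mathcal O[t]/(h)=k[\mathcal X]_g/h\,k[\mathcal X]_g$ follows from property (v): since $g\equiv 1\bmod (f')$ (that was how $g$ was chosen, $g|_{\mathcal Z'}=1$), localizing at $g$ does not change $k[\mathcal X]/(f')$, and the radical-$(f')$ ideal generated by $\sigma^*(h)$ agrees with $(f')$ after inverting $g$ because $\mathcal Z^{\prime\prime}$ is removed — so $k[\mathcal X]_g/h\,k[\mathcal X]_g=k[\mathcal X]_g/f'k[\mathcal X]_g=k[\mathcal X]/(f')=\mathcal O[t]/(h)$. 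The remaining hypothesis, that $\sigma_g^*(h)$ is not a zero-divisor in $k[\mathcal X]_g$, is where care is needed: one uses that $\mathcal X$ is irreducible and regular (hence $k[\mathcal X]_g$ is a domain, provided $\mathcal X_g$ is nonempty, which holds since $\mathcal Z'\cup\Delta(U)\subseteq\mathcal X_g$) and that $h$ is monic of positive degree, so $\sigma^*(h)\neq 0$; in a domain a nonzero element is a nonzerodivisor. The only thing to double-check is that $h$ is not the constant polynomial $1$, equivalently that $\mathcal Z'\neq\emptyset$ — but $\Delta(U)\subseteq\mathcal Z'$ by the "special" hypothesis, so $\mathcal Z'$ is nonempty and $h$ has positive degree. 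Assembling these observations gives the corollary.
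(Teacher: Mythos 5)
Your proof is correct and follows essentially the same route as the paper: read (i), (iii) off the choice of $g$; get (iv) from $\sigma^*(h)\in (f')$; establish the scheme-theoretic decomposition $\sigma^*(h)k[\mathcal X]=(f')\cap I(\mathcal Z'')$ for (v); and prove the third hypothesis of (ii) by localizing that decomposition at $g$. One small inaccuracy: $g$ is chosen so that $(f')+(g)=k[\mathcal X]$, i.e.\ $g$ is a unit modulo $(f')$, not literally $g|_{\mathcal Z'}=1$ (and the surjectivity of $\mathcal O[t]\to k[\mathcal X]/(f')$ comes from item (a), not (f)); neither affects the argument.
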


\begin{proof}[Proof of Corollary \ref{ElementaryNisSquareNew_1}]
We will use notation from Theorem
\ref{ElementaryNisSquare_1}.
Since
$\mathcal X$
is a regular affine irreducible
scheme
and
$\sigma: \mathcal X \to \Aff^1_U$
is finite surjective
the induced $\mathcal O$-algebra homomorphism
$\sigma^*: \mathcal O[t] \to \Gamma(\mathcal X, \mathcal O_{\mathcal X})$
is a monomorphism. We will regard
below the $\mathcal O$-algebra
$\mathcal O[t]$
as a subalgebra via $\sigma^*$.


The assertions (i) and (iii) of the Corollary hold by our choice of $g$.
The assertion (iv) holds, since
$\sigma^*(h)$ is in the principal ideal $(f^{\prime})$
(use the properties (a) and ( \textrm{f}) from Theorem \ref{ElementaryNisSquare_1}).
It remains to prove the assertion (ii).
The morphism $\sigma$ is finite. Hence the
$\mathcal O[t]$-algebra
$\Gamma(\mathcal X, \mathcal O_{\mathcal X})_g$
is finitely generated. The scheme
$\mathcal X$
is regular and irreducible. Thus,
the ring
$\Gamma(\mathcal X, \mathcal O_{\mathcal X})$
is a domain. The homomorphism
$\sigma^*$
is injective. Hence, the element
$h$ is not zero and is not a zero divisor in
$\Gamma(\mathcal X,\mathcal O_{\mathcal X} )_g$.

It remains to check that
$\mathcal O[t]/(h)=\Gamma(\mathcal X,\mathcal O_{\mathcal X} )_g/h\Gamma(\mathcal X,\mathcal O_{\mathcal X} )_g$.
Firstly, by the choice of $h$ and by the item (a) of Theorem
\ref{ElementaryNisSquare_1}
one has
$\mathcal O[t]/(h)=\Gamma(\mathcal X,\mathcal O_{\mathcal X})/(f^{\prime})$.
Secondly, by the property (1) of the element $g$ one has
$\Gamma(\mathcal X,\mathcal O_{\mathcal X})/(f^{\prime})=
\Gamma(\mathcal X,\mathcal O_{\mathcal X})_g/f^{\prime}\Gamma(\mathcal X,\mathcal O_{\mathcal X})_g$.
Finally, by the items (c) and (a) of Theorem
\ref{ElementaryNisSquare_1}
one has
\begin{equation}
\label{Prelocalization_1}
\Gamma(\mathcal X,\mathcal O_{\mathcal X})/(f^{\prime}) \
\times \Gamma(\mathcal X,\mathcal O_{\mathcal X})/I(\mathcal Z^{\prime\prime})=
\Gamma(\mathcal X,\mathcal O_{\mathcal X})/(h).
\end{equation}
Localizing both sides of (\ref{Prelocalization_1}) in $g$ one gets an equality
$$\Gamma(\mathcal X,\mathcal O_{\mathcal X})_g/f^{\prime}\Gamma(\mathcal X,\mathcal O_{\mathcal X})_g=
\Gamma(\mathcal X,\mathcal O_{\mathcal X})_g/h\Gamma(\mathcal X,\mathcal O_{\mathcal X})_g,$$
hence
$\mathcal O[t]/(h)=
\Gamma(\mathcal X,\mathcal O_{\mathcal X})/(f^{\prime})=
\Gamma(\mathcal X,\mathcal O_{\mathcal X})_g/f^{\prime}\Gamma(\mathcal X,\mathcal O_{\mathcal X})_g=
\Gamma(\mathcal X,\mathcal O_{\mathcal X})_g/h\Gamma(\mathcal X,\mathcal O_{\mathcal X})_g.
$
Whence the Corollary.
\end{proof}

\begin{rem}
\label{ElementaryNisSquareRem_1}
The item \rm{(ii)} of this corollary shows that the cartesian square
\begin{equation}
\label{SquareDiagram2_1}
    \xymatrix{
     \mathcal X_{gh}  \ar[rr]^{\inc} \ar[d]_{\sigma_{gh}} &&  \mathcal X_g \ar[d]^{\sigma_g}  &\\
     (\Aff^1 \times U)_{h} \ar[rr]^{\inc} && \Aff^1 \times U &\\
    }
\end{equation}
can be used to glue principal $\bG$-bundles for a reductive $U$-group scheme $\bG$.
\end{rem}
Set $Y:=\mathcal X_g$, $p_X=q_X: Y\to X$, $p_U=q_U: Y\to U$, $\tau=\sigma_g$, $\tau_h=\sigma_{gh}$, $\delta=\Delta$ and note that
$pr_U\circ \tau=p_U$. Take the monic polinomial
$h \in \mathcal O[t]$ from the item (f) of Theorem \ref{ElementaryNisSquare_1}.
With this replacement of notation and with the element $h$ we arrive to the following

\begin{thm}[stronger than Theorem \ref{MajorIntrod}]\label{Major}
Let the field $k$, the variety $X$, its closed points $x_1,x_2,\dots,x_n$, the semi-local ring $\mathcal O=\mathcal O_{X,\{x_1,x_2,\dots,x_n\}}$
the semi-local scheme $U=Spec(\mathcal O)$,
the function $\textrm{f}\in k[X]$
be the same as in Theorem \ref{ElementaryNisSquare_1}.
Then the monic polinomial $h\in O[t]$,
the commutative diagram
of schemes with the irreducible affine $U$-smooth $Y$
\begin{equation}
\label{SquareDiagram2_2}
    \xymatrix{
       (\Aff^1 \times U)_{h}  \ar[d]_{inc} && Y_h:=Y_{\tau^*(h)} \ar[ll]_{\tau_{h}}  \ar[d]^{inc} \ar[rr]^{(p_X)|_{Y_h}} && X_f  \ar[d]_{inc}   &\\
     (\Aff^1 \times U)  && Y  \ar[ll]_{\tau} \ar[rr]^{p_X} && X                                     &\\
    }
\end{equation}
the morphism $\delta: U\to Y$ subject to the following conditions:
\begin{itemize}
\item[\rm{(i)}]
the left hand side square
is an elementary {\bf distinguished} square in the category of affine $U$-smooth schemes in the sense of
\cite[Defn.3.1.3]{MV};
\item[\rm{(ii)}]
$p_X\circ \delta=can: U \to X$, where $can$ is the canonical morphism;
\item[\rm{(iii)}]
$\tau\circ \delta=i_0: U\to \Aff^1 \times U$ is the zero section
of the projection $pr_U: \Aff^1 \times U \to U$;
\item[\rm{(iv)}] $h(1)\in \mathcal O[t]$ is a unit.
\end{itemize}
\end{thm}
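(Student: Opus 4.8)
The plan is to observe that Theorem~\ref{Major} is essentially a repackaging of Theorem~\ref{ElementaryNisSquare_1} and Corollary~\ref{ElementaryNisSquareNew_1}, requiring no new geometric input: the genuinely hard work (producing $\mathcal X$, $\sigma$ and $f'$ that behave well even over finite fields) is already carried out in Theorems~\ref{ElementaryNisSquare_1_triples} and~\ref{ThEquatingGroups_1}, which feed into Theorem~\ref{ElementaryNisSquare_1}. Following the notation fixed just before the statement, I would take the affine irreducible $U$-smooth scheme $\mathcal X$, the finite surjective $U$-morphism $\sigma\colon\mathcal X\to\Aff^1\times U$, the essentially smooth $q_X\colon\mathcal X\to X$, the section $\Delta$, the function $f'\in q_X^*(\textrm{f})k[\mathcal X]$ and the monic polynomial $h\in\mathcal O[t]$ of item~(f) supplied by Theorem~\ref{ElementaryNisSquare_1}, choose the element $g\in\Gamma(\mathcal X,\mathcal O_{\mathcal X})$ as in the paragraph preceding Corollary~\ref{ElementaryNisSquareNew_1}, and set $Y=\mathcal X_g$, $p_X=q_X|_Y$, $p_U=q_U|_Y$, $\tau=\sigma_g$, $\tau_h=\sigma_{gh}$, $\delta=\Delta$. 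The section $\delta$ lands in $Y$ because $\Delta(U)\subset\mathcal X_g$ by Corollary~\ref{ElementaryNisSquareNew_1}(iii); $Y$ is affine and $U$-smooth as a principal open of the affine $U$-smooth $\mathcal X$; and $\tau$ is \'etale by Corollary~\ref{ElementaryNisSquareNew_1}(i). It then remains to verify the four numbered conditions.

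For condition~(i) I would note that the left-hand square is cartesian by the very definition $Y_h=Y_{\tau^*(h)}$, that $(\Aff^1\times U)_h\hookrightarrow\Aff^1\times U$ is an open immersion (a principal localization), and that $\tau$ is \'etale; so it is an elementary distinguished square in the sense of \cite[Defn.~3.1.3]{MV} once one knows that $\tau$ restricts to an isomorphism over the closed complement $\{h=0\}$ of $(\Aff^1\times U)_h$. This is exactly Corollary~\ref{ElementaryNisSquareNew_1}(ii),(v): the ring map $\mathcal O[t]/(h)\to\Gamma(Y,\mathcal O_Y)/\tau^*(h)\Gamma(Y,\mathcal O_Y)$ induced by $\tau^*$ is an isomorphism, hence $\tau^{-1}(\{h=0\})\to\{h=0\}$ is an isomorphism of schemes, which is more than enough. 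All four corners are affine and $U$-smooth, so the square lives in the required category. Conditions~(ii) and~(iii) are then immediate: $p_X\circ\delta=q_X\circ\Delta=\can$ is item~(a$'$) of Theorem~\ref{ElementaryNisSquare_1}, and $\tau\circ\delta=\sigma_g\circ\Delta=i_0$ is Corollary~\ref{ElementaryNisSquareNew_1}(iii) (equivalently the last equality of item~(a$'$)).

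The only slightly delicate point, and the one I expect to need the most care, is condition~(iv). The assertion that $h(1)\in\mathcal O$ is a unit is equivalent to $\mathcal O[t]/(h,t-1)=\mathcal O/(h(1))=0$, i.e. to the emptiness of $\{h=0\}\cap(\{1\}\times U)$ inside $\Aff^1\times U$. By Theorem~\ref{ElementaryNisSquare_1}(f) the ideal $(h)$ is the kernel of $\mathcal O[t]\xrightarrow{\sigma^*}k[\mathcal X]\to k[\mathcal X]/(f')$, and by Theorem~\ref{ElementaryNisSquare_1}(a) the morphism $\sigma|_{\mathcal Z'}\colon\mathcal Z'\to\Aff^1\times U$ is a closed embedding; hence the scheme-theoretic image $\sigma(\mathcal Z')$ is precisely the closed subscheme cut out by $(h)$. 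If a point of $\{h=0\}$ lay in $\{1\}\times U$, then by injectivity of $\sigma|_{\mathcal Z'}$ on points it would equal $\sigma(z)$ for some $z\in\mathcal Z'$, and $\sigma(z)\in\{1\}\times U$ would force $z\in\sigma^{-1}(\{1\}\times U)=\mathcal D_1$; but $\mathcal D_1\cap\mathcal Z'=\emptyset$ by Theorem~\ref{ElementaryNisSquare_1}(e), a contradiction. Thus $\{h=0\}\cap(\{1\}\times U)=\emptyset$ and $h(1)$ is a unit, which completes the proof. I anticipate the real subtlety here to be bookkeeping --- checking that replacing $\sigma$ on $\mathcal X$ by $\tau=\sigma_g$ on $Y=\mathcal X_g$ preserves every property used --- which is guaranteed by Corollary~\ref{ElementaryNisSquareNew_1}(i)--(v), rather than any essentially new argument.
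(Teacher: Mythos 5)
Your proof is correct and takes essentially the same route as the paper: it derives all four conditions from Theorem~\ref{ElementaryNisSquare_1} and Corollary~\ref{ElementaryNisSquareNew_1} with no new geometric input, and your expansion of condition~(iv) via items~(a),~(e),~(f) is exactly what the paper's terse one-line citation is gesturing at. The only minor omission is that you do not explicitly note why $(p_X)|_{Y_h}$ lands in $X_f$ (Corollary~\ref{ElementaryNisSquareNew_1}(iv), $\mathcal X_{gh}\subseteq\mathcal X_{q_X^*(\textrm{f})}$), but this is immediate.
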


\begin{proof}
The items \rm{(i)} and \rm{(iv)} of the Corollary \ref{ElementaryNisSquareNew_1}
show that the morphisms $\delta: U\to Y$ and $(p_X)|_{Y_h}: Y_h\to X_f$ are well defined.
The items \rm{(i)}, \rm{(ii)} of that Corollary show that the left hand side square in the diagram
(\ref{SquareDiagram2_2})
is an elementary {\bf distinguished} square in the category of smooth $U$-schemes in the sense of
\cite[Defn.3.1.3]{MV}. The equalities
$p_X\circ \delta=can$ and $\tau\circ \delta=i_0$ are clear.
The property (iv) follows from the items (e), (f) and (a)
of Theorem \ref{ElementaryNisSquare_1}.
\end{proof}

\begin{rem}
\label{a_frame}
The latter commutative diagram gives rise to a pointed motivic space morphism
$$\alpha: \mathbb P^1\times U/{\infty} \times U \to \Aff^1\times U/(\Aff^1 \times U)_{h} \xrightarrow{(p_X)|_{Y_h}} \to X/X_f$$
such that $\alpha|_{0\times U}: 0\times U \to X/X_f$ coincides with the morphism
$U\xrightarrow{can} X \to X/X_f$.
\end{rem}

\section{A moving lemma}\label{An extended_moving_lemma_sect}
Let $k$ be a field. Particularly, $k$ can be a finite field. We prove the following useful geometric theorem
\begin{thm}[A moving lemma]
\label{A_moving_lemma}
Let $X$ be a $k$-smooth quasi-projective irreducible $k$-variety, and let $x_1,x_2,\dots,x_n$ be closed points in $X$.
Let $U=Spec(\mathcal O_{X,\{x_1,x_2,\dots,x_n\}})$.
Let $Z$ be a closed subset in $X$.
Let
$U\xrightarrow{can} X$ be the inclusion and
$X\xrightarrow{p} X/(X-Z)$
be the factor morphism.
Let $*=(X-Z)/(X-Z) \in X/(X-Z)$
be the distinguished point of $X/(X-Z)$.
Given a closed subset $Z\subset X$
there is a Nisnevich sheaf morphism
$$\Phi_t: \Aff^1 \times U \to X/(X-Z)$$
such that $\Phi_0: U\to X/(X-Z)$ is the composite morphism
$U\xrightarrow{can} X\xrightarrow{p} X/(X-Z)$ and $\Phi_1: U\to X/(X-Z)$ takes $U$ to
the distinguished point $*$ in $X/(X-Z)$.
\end{thm}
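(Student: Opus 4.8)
The plan is to deduce this statement from Theorem \ref{Major} by an excision argument; all the geometric difficulty is already absorbed into Theorem \ref{Major}, and what is left is bookkeeping with elementary distinguished squares. I take $Z\subsetneq X$ throughout, which is the only meaningful case (for $Z=X$ the target is $X_+$ and, $\Aff^1\times U$ being irreducible, no such $\Phi_t$ can exist).

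First I would reduce to $X$ affine and pin down a good defining function. Because $U$ is the semi-local scheme of the points $x_i$, the morphism $U\xrightarrow{can}X$ factors through any affine open neighbourhood $X^0$ of $\{x_1,\dots,x_n\}$; writing $Z^0:=Z\cap X^0$ (a proper closed subset of the irreducible $X^0$), there is a canonical morphism of pointed Nisnevich sheaves $X^0/(X^0-Z^0)\to X/(X-Z)$ through which $U\xrightarrow{can}X\xrightarrow{p}X/(X-Z)$ factors and which carries the constant morphism to the constant morphism. So I may replace $(X,Z)$ by $(X^0,Z^0)$ and assume $X$ affine (the zero-dimensional case being trivial). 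Now $Z\cup\{x_1,\dots,x_n\}$ is a proper closed subset of $X$, hence the ideal $I:=I(Z)\cap\mathfrak m_{x_1}\cap\dots\cap\mathfrak m_{x_n}\subseteq k[X]$ is nonzero, and I fix $0\neq\textrm{f}\in I$. Then $\textrm{f}$ vanishes at every $x_i$, and since $\textrm{f}\in I(Z)$ the principal open $X_{\textrm{f}}=\{\textrm{f}\neq 0\}$ is contained in $X-Z$.

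Next I would apply Theorem \ref{Major} to $X$, the points $x_i$ and this $\textrm{f}$. It furnishes a monic $h\in\mathcal O[t]$ ($\mathcal O=k[U]$), an irreducible affine $U$-smooth $Y$ with morphisms $p_X\colon Y\to X$ and $\tau\colon Y\to\Aff^1\times U$ and a section $\delta\colon U\to Y$, fitting into diagram (\ref{SquareDiagram2_2}), with: the left-hand square an elementary distinguished square; $p_X\circ\delta=can$; $\tau\circ\delta=i_0$; and $h(1)$ a unit. Combining parts (a) and (f) of Theorem \ref{ElementaryNisSquare_1} with $\tau\circ\delta=i_0$ and $\Delta(U)\subseteq\{f=0\}$ (special nice triple) I would also extract $h(0)=0\in\mathcal O$; thus $\{0\}\times U$ is disjoint from $(\Aff^1\times U)_h$ while $\{1\}\times U\subseteq(\Aff^1\times U)_h$. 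By Nisnevich excision for elementary distinguished squares (\cite[\S3]{MV}), the natural map of pointed Nisnevich sheaves $e\colon Y/Y_h\to(\Aff^1\times U)/(\Aff^1\times U)_h$ is an isomorphism, and from the right-hand square of (\ref{SquareDiagram2_2}) one has $p_X(Y_h)\subseteq X_{\textrm{f}}\subseteq X-Z$, so $p_X$ descends to $\bar p_X\colon Y/Y_h\to X/(X-Z)$. With $c\colon\Aff^1\times U\to(\Aff^1\times U)/(\Aff^1\times U)_h$ the collapse morphism I would set $\Phi_t:=\bar p_X\circ e^{-1}\circ c$.

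It then remains to identify the two ends. At $t=1$, $i_1(U)\subseteq(\Aff^1\times U)_h$ since $h(1)$ is a unit, so $c\circ i_1$ is constant at the basepoint and $\Phi_1$ is the constant morphism to $*$. At $t=0$, $i_0(U)$ lies in the complement of $(\Aff^1\times U)_h$ since $h(0)=0$; the distinguished square identifies that complement with $Y\setminus Y_h$ via $\tau$, and since $\tau\circ\delta=i_0$ with $i_0(U)\subseteq\{h=0\}$ the preimage of $i_0(U)$ is exactly $\delta(U)$, so $e^{-1}\circ c\circ i_0$ is the class of $\delta$ and $\Phi_0=\bar p_X\circ[\delta]=[p_X\circ\delta]=[can]$, i.e. the composite $U\xrightarrow{can}X\xrightarrow{p}X/(X-Z)$, as desired. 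The main obstacle is not in this deduction — which is purely formal once Theorem \ref{Major} is available — but in Theorem \ref{Major} itself, whose proof rests on the machinery of nice triples; the only delicate bookkeeping point here is the equality $h(0)=0$ used to pin down $\Phi_0$.
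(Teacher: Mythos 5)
Your proof is correct and follows essentially the same route as the paper: apply Theorem~\ref{Major} with an $\textrm f$ cutting out a locus containing both $Z$ and the points $x_i$, use Nisnevich excision for the elementary distinguished square in (\ref{SquareDiagram2_2}) to invert $\tau$ on quotient sheaves, and then identify the two ends via $\tau\circ\delta=i_0$, $p_X\circ\delta=can$ and the fact that $h(1)$ is a unit. The only differences are presentational: you make explicit the reduction to $X$ affine (tacit in the paper, since Theorem~\ref{Major} is stated for affine $X$) and you derive $h(0)=0$ from items (a), (a') and (f) of Theorem~\ref{ElementaryNisSquare_1} rather than citing item (d); both are valid.
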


\begin{proof}
Take a function $\text{f} \in k[X]$ such that $\text{f}$
vanishes as on $Z$, so at all the points
$x_1,x_2,\dots,x_n$.
Consider the commutative diagram (\ref{SquareDiagram2_2}).
Since the left hand side square is a distinguished elementary square,
hence the morphism
$\sigma: Y/Y_h \to \Aff^1_U/(\Aff^1 \times U)_{h}$
of Nisnevich sheaves is an isomorphism.
Thus there is a composite morphism of motivic spaces of the form
$$\Phi_t: \Aff^1_U\to \Aff^1_U/(\Aff^1 \times U)_{h} \xrightarrow{\sigma^{-1}}
Y/Y_h   \xrightarrow{p_X}  X/X_{\textrm{f}}.$$
Let $i_0: 0\times U \to \Aff^1_U$ be the natural morphism. By the properties (a') and (d) from
Theorem
\ref{ElementaryNisSquare_1}
the morphism
$\Phi_0:=\Phi \circ i_0$ equals to the one
$$U\xrightarrow{can} X \xrightarrow{p} X/X_{\textrm{f}},$$
where
$p: X\to X/X_{\textrm{f}}$
is the canonical morphisms
By the item (e) of Theorem \ref{ElementaryNisSquare_1}
the morphism
$\Phi_1:=\Phi  \circ i_1: U\to X/X_f$
is the constant morphism to
the distinguished point $*$ of $X/(X-Z)$.
\end{proof}

\begin{thm}[An extended moving lemma]
\label{An extended_moving_lemma}
Let $X$ be a $k$-smooth quasi-projective irreducible $k$-variety, and let $x_1,x_2,\dots,x_n$ be closed points in $X$.
Let $U=Spec(\mathcal O_{X,\{x_1,x_2,\dots,x_n\}})$. Let $c > 0$ be an integer.
Let $Z$ be a closed subset in $X$ of pure codimension $c$ in $X$.
Then there is a closed subset $Z^{new}$ in $X$ containing $Z$ and of pure codimension $c-1$
and a morphism of pointed Nisnevich sheaves
$$\Phi_t: \Aff^1 \times (U/(U-Z^{new}) \to X/(X-Z)$$
such that
$\Phi_0: U/(U-Z^{new})\to X/(X-Z)$ is the composite morphism \\
$U/(U-Z^{new})\xrightarrow{can} X/(X-Z^{new})\xrightarrow{p} X/(X-Z)$; and\\
$\Phi_1: U/(U-Z^{new})\to X/(X-Z)$ \\takes $U/(U-Z^{new})$ to
the distinguished point $*$ in $X/(X-Z)$.
\end{thm}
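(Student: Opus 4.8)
The plan is to reduce to the affine case, run the geometric machinery behind Theorem~\ref{A_moving_lemma} on a nonzero $\textrm{f}\in k[X]$ vanishing on $Z$, and then take the trace $Z^{new}_{U}$ of $Z^{new}$ on $U$ to be $p_{U}\bigl(p_{X}^{-1}(Z)\bigr)$, the image in $U$ of the preimage of $Z$ inside the nice triple $Y$. The point is that this set is automatically of pure codimension $c-1$ in $U$ and automatically contains the trace of $Z$.

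First I would replace $X$ by an affine open neighbourhood $X^{0}$ of $\{x_{1},\dots,x_{n}\}$ and build everything there: the homotopy constructed over $X^{0}$ is pushed forward along the map $X^{0}/(X^{0}-Z)\to X/(X-Z)$ induced by $X^{0}\hookrightarrow X$, and the components of $Z$ that miss $\{x_{1},\dots,x_{n}\}$ are enclosed, by a routine Bertini argument (using Poonen's Bertini theorems when $k$ is finite), in pure codimension $c-1$ closed subsets of $X$ avoiding $\{x_{1},\dots,x_{n}\}$, which are appended to $Z^{new}$ at the end without affecting $U$. So assume $X$ affine, pick a nonzero $\textrm{f}\in k[X]$ vanishing on $Z$ (hence at each $x_{i}$), and apply Theorem~\ref{ElementaryNisSquare_1} --- equivalently Theorem~\ref{Major} together with Corollary~\ref{ElementaryNisSquareNew_1} --- to $(X,\{x_{1},\dots,x_{n}\},\textrm{f})$. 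This produces the irreducible affine $U$-smooth $Y$, the essentially smooth $p_{X}\colon Y\to X$ of relative dimension one, the smooth $p_{U}\colon Y\to U$ with section $\delta$ satisfying $p_{X}\circ\delta=can$, a $U$-morphism $\tau\colon Y\to\Aff^{1}\times U$, a monic $h\in\mathcal O[t]$, and a function $f'\in p_{X}^{*}(\textrm{f})\,k[Y]$ with $\mathcal Z':=\{f'=0\}$, subject to: $\mathcal Z'$ is finite over $U$, $\tau|_{\mathcal Z'}$ is a closed embedding onto $\tau(\mathcal Z')=\{h=0\}$, $Y_{h}=Y\setminus\mathcal Z'$, $h(1)$ is a unit, and the left square of~(\ref{SquareDiagram2_2}) is an elementary distinguished square. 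In particular $\mathcal Z'\supseteq p_{X}^{-1}(\{\textrm{f}=0\})\supseteq p_{X}^{-1}(Z)$. Exactly as in the proof of Theorem~\ref{A_moving_lemma} one then obtains a morphism of pointed Nisnevich sheaves
\[
\tilde\Phi_{t}\colon\ \Aff^{1}\times U\ \twoheadrightarrow\ (\Aff^{1}\times U)/(\Aff^{1}\times U)_{h}\ \xrightarrow{\ \tau^{-1}\ }\ Y/Y_{h}\ \xrightarrow{\ p_{X}\ }\ X/X_{\textrm{f}}\ \longrightarrow\ X/(X-Z)
\]
with $\tilde\Phi_{0}=\bigl(U\xrightarrow{can}X\to X/(X-Z)\bigr)$ and $\tilde\Phi_{1}$ constant at the base point.

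Now the choice of $Z^{new}$. Set $Z^{new}_{U}:=p_{U}\bigl(p_{X}^{-1}(Z)\bigr)\subseteq U$. Since $Z$ is pure of codimension $c$ in $X$ and $p_{X}$ is essentially smooth with equidimensional fibres, $p_{X}^{-1}(Z)$ is nonempty (it contains $\delta(u_{i})$ for each $i$, as $p_{X}(\delta(u_{i}))=x_{i}\in Z$) and pure of codimension $c$ in the irreducible $Y$, i.e.\ of pure dimension $\dim Y-c=\dim X+1-c$; because $p_{X}^{-1}(Z)\subseteq\mathcal Z'$ and $\mathcal Z'$ is finite over $U$, the restriction $p_{U}|_{p_{X}^{-1}(Z)}$ is finite, so $Z^{new}_{U}$ is closed in $U$ of pure dimension $\dim X+1-c$, that is, of pure codimension $c-1$ (note $\dim U=\dim X$). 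Moreover $can^{-1}(Z)\subseteq Z^{new}_{U}$: for $u\in can^{-1}(Z)$ one has $p_{X}(\delta(u))=can(u)\in Z$, so $\delta(u)\in p_{X}^{-1}(Z)$ and $u=p_{U}(\delta(u))\in Z^{new}_{U}$. Let $Z^{new}$ be the union of $\overline{can(Z^{new}_{U})}^{\,X}$ with the pure codimension $c-1$ closed subsets of $X$ of the second paragraph covering the components of $Z$ missing $\{x_{1},\dots,x_{n}\}$. Since every point of $U$ specializes to some $x_{i}$, the operation $T\mapsto\overline{can(T)}^{\,X}$ preserves dimension on closed subsets of $U$, so $Z^{new}$ is pure of codimension $c-1$ in $X$; from $can^{-1}(Z)\subseteq Z^{new}_{U}$ (which handles the components of $Z$ through $\{x_{1},\dots,x_{n}\}$) and the last clause, $Z\subseteq Z^{new}$; and $can^{-1}(Z^{new})=Z^{new}_{U}$ (the appended sets avoid $\{x_{1},\dots,x_{n}\}$, and $can^{-1}(\overline{can(T)}^{\,X})=T$ for $T$ closed in $U$).

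Finally I would check that $\tilde\Phi_{t}$ sends $\Aff^{1}\times(U-Z^{new})$ to the base point, so that it factors through a morphism $\Phi_{t}\colon\Aff^{1}\times(U/(U-Z^{new}))\to X/(X-Z)$. Take $(t,u)$ with $u\notin Z^{new}_{U}$, i.e.\ $u\notin can^{-1}(Z^{new})$. If $h(t)(u)\neq0$ then $(t,u)\in(\Aff^{1}\times U)_{h}$ and $\tilde\Phi_{t}(t,u)$ is the base point. If $h(t)(u)=0$ then $(t,u)\in\{h=0\}=\tau(\mathcal Z')$, so (using that $\tau|_{\mathcal Z'}$ is a closed embedding and $Y_{h}=Y\setminus\mathcal Z'$) the point $z:=\tau^{-1}(t,u)$ lies in $\mathcal Z'$ and $\tilde\Phi_{t}(t,u)$ is the class of $p_{X}(z)$ in $X/(X-Z)$; were this not the base point, then $p_{X}(z)\in Z$, hence $z\in p_{X}^{-1}(Z)$ and $u=p_{U}(z)\in p_{U}\bigl(p_{X}^{-1}(Z)\bigr)=Z^{new}_{U}$, a contradiction. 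Thus $\Phi_{t}$ exists; since $can$ carries $U-Z^{new}_{U}$ into $X-Z^{new}$, the map $\Phi_{0}$ is the composite $U/(U-Z^{new})\xrightarrow{can}X/(X-Z^{new})\xrightarrow{p}X/(X-Z)$, and $\Phi_{1}$ is constant at the base point, as required. The main obstacle is the dimension count of the third paragraph --- that $p_{U}\bigl(p_{X}^{-1}(Z)\bigr)$, with $\mathcal Z'$ finite over $U$ and $p_{X}$ essentially smooth, is closed of pure codimension $c-1$ in $U$, and that its closure spreads back out to a pure codimension $c-1$ closed subset of $X$ still containing $Z$ --- together with the routine reduction to affine $X$; the homotopy itself is literally the one produced for Theorem~\ref{A_moving_lemma}.
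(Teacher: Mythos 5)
Your argument is correct in outline and reaches the same conclusion, but you take a genuinely different route from the paper's on the two structural points of the proof: how $Z^{new}$ is produced, and how the homotopy is seen to factor through $\Aff^1\times(U/(U-Z^{new}))$.

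For $Z^{new}$, the paper appeals to the almost elementary fibration $r_X:X\to S$ hidden behind Theorem~\ref{Major} (this is the diagram~(\ref{SquareDiagram2_3}), with $\mathcal X=U\times_S X$ and $r_X|_{\{\mathrm f=0\}}$ finite) and simply puts $Z^{new}:=r_X^{-1}(r_X(Z))$. Since $Z\subset\{\mathrm f=0\}$ is finite over $S$, $r_X(Z)$ is closed and equidimensional, and since $r_X$ is a relative curve, $Z^{new}$ is pure of codimension $c-1$ and visibly contains $Z$; then one sets $Z^{new}_U=can^{-1}(Z^{new})=r_U^{-1}(r_X(Z))$ and uses $p_U^{-1}(Z^{new}_U)=p_X^{-1}(Z^{new})$ to set up the rest. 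You instead define $Z^{new}_U:=p_U\bigl(p_X^{-1}(Z)\bigr)$ directly in $U$, deduce pure codimension $c-1$ from flatness of $p_X$ and finiteness of $\mathcal Z'$ over $U$, and then spread out to $X$ by taking the closure and appending Bertini-type enclosures for the components of $Z$ missing $\{x_1,\dots,x_n\}$. Both are fine, and your $Z^{new}_U$ is in fact a (possibly proper) subset of the paper's $r_U^{-1}(r_X(Z))$, which is harmless since the conclusion only gets stronger as $Z^{new}$ shrinks; the paper's formula has the advantage that $Z\subseteq Z^{new}$ is automatic on the shrunken $X$ and the codimension count is immediate, whereas your route buys a more self-contained dimension count inside $U$ at the price of the appended-components step. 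On that step you should be explicit that it only makes sense for $c\ge 2$ (for $c=1$ one must take $Z^{new}=X$ and the statement degenerates to Theorem~\ref{A_moving_lemma}), since a pure codimension-$0$ closed subset of the irreducible $X$ is $X$ itself and cannot avoid the $x_i$.

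For the homotopy, the paper does not reuse the square with $Y_h$ from Theorem~\ref{A_moving_lemma}; it passes to the smaller elementary distinguished square~(\ref{SquareDiagram2_4}) built from $\mathcal Z:=p_X^{-1}(Z)\subseteq\mathcal Z'$ (this is again elementary distinguished, because $\tau|_{\mathcal Z'}$ is a closed embedding so $\tau^{-1}(\tau(\mathcal Z))=\mathcal Z$ in $Y$), and defines $\Phi_t$ directly as
\[
\Aff^1\times\bigl(U/(U-Z^{new}_U)\bigr)\ \longrightarrow\ (\Aff^1\times U)/\bigl((\Aff^1\times U)-\tau(\mathcal Z)\bigr)\ \xrightarrow{\ \tau^{-1}\ }\ Y/(Y-\mathcal Z)\ \xrightarrow{\ p_X\ }\ X/(X-Z),
\]
which is manifestly well defined once one checks $\tau(\mathcal Z)\subseteq\Aff^1\times Z^{new}_U$. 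You instead keep the $Y_h$ square, obtain the homotopy $\tilde\Phi_t$ on all of $\Aff^1\times U$, and then argue pointwise that its restriction to $\Aff^1\times(U-Z^{new}_U)$ is the base point. Your pointwise chase is morally right (and I do not see an error in it), but it is the weakest link: a statement about where scheme points go does not by itself produce a factorization of a morphism of Nisnevich sheaves. The clean way to finish your version is to insert the comparison of the two quotients, i.e.\ to note that the elementary distinguished square with $Y_h$ maps to the one with $Y-\mathcal Z$ (since $Y_h=Y-\mathcal Z'\subseteq Y-\mathcal Z$ and $(\Aff^1\times U)_h\subseteq(\Aff^1\times U)-\tau(\mathcal Z)$), so that your $\tilde\Phi_t$ already factors through $(\Aff^1\times U)/\bigl((\Aff^1\times U)-\tau(\mathcal Z)\bigr)$, after which $\tau(\mathcal Z)\subseteq\Aff^1\times Z^{new}_U$ gives the desired factorization formally. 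With that repair you recover exactly the paper's composite, so the two proofs ultimately produce the same map.
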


\begin{proof}
Take a function $\text{f} \in k[X]$ such that $\text{f}$
vanishes as on $Z$, so at all the points
$x_1,x_2,\dots,x_n$.
Consider the commutative diagram (\ref{SquareDiagram2_2}).
Recall \cite[Sect.6]{PSV} that additionally there is a commutative diagram of schemes
\begin{equation}
\label{SquareDiagram2_3}
    \xymatrix{
     Y \ar[rr]^{\rho} \ar[rrd]^{p_U} && \mathcal X   \ar[d]^{q_U} \ar[rr]^{q_X} && X  \ar[d]_{r_X}   &\\
    && U  \ar[rr]^{r_U} && S                                     &\\
    }
\end{equation}
with $\mathcal X=U\times_S X$ such that $r_X$ is an almost elementary fibration such that
$r_X|_{\{f=0\}}: \{f=0\}\to S$ is finite,
$q_X$ and $q_U$ are the projections, and $rho$ is \'{e}tale.

Set $r_U=r_X\circ can$,
$p_U=q_U\circ \rho$,
$p_X=q_X\circ \rho$.
Then $p_U$
is a smooth morphism of relative dimension 1 with an irreducible $Y$.
Set $Z^{new}:=r^{-1}_X(r_X(Z))$.
Since $r_X|_{\{f=0\}}: \{f=0\} \to S$ is finite and $Z$ is in $\{f=0\}$, hence
$Z$ is finite over $S$ and $r_X(Z)$ is equi-dimensional.
Since $r_X$ is an almost elementary fibration, hence
$Z^{new}$ is equi-dimensional of pure codimension $c-1$ in $X$.
Clearly, $Z$ is in $Z^{new}$. Set $\mathcal Z=p^{-1}_X(Z)$.
Then in the diagram the left hand side square
\begin{equation}
\label{SquareDiagram2_4}
    \xymatrix{
       (\Aff^1 \times U)-\tau(\mathcal Z)  \ar[d]_{inc} && Y-\mathcal Z \ar[ll]_{\tau|_{Y-\mathcal Z}}  \ar[d]^{inc} \ar[rr]^{(p_X)|_{Y_h}} && X-Z  \ar[d]_{inc}   &\\
     (\Aff^1 \times U)  && Y  \ar[ll]_{\tau} \ar[rr]^{p_X} && X                                     &\\
    }
\end{equation}
is an elementary distinguished square. Hence $\tau$ induces a Nisnevich sheaf isomorphism
$\tau: Y/(Y-\mathcal Z) \to (\Aff^1 \times U)/((\Aff^1 \times U)-\tau(\mathcal Z))$.
Note that for $Z^{new}_U:=can^{-1}(Z^{new})$ one has
$p^{-1}_U(Z^{new}_U)=p^{-1}_X(Z^{new})$
and $\tau^{-1}(\Aff^1\times Z^{new}_U)=p^{-1}_U(Z^{new}_U)$.
Let
$$\Phi_t: \Aff^1 \times (U/U-Z^{new}_U) \xrightarrow{\Pi} (\Aff^1 \times U)/((\Aff^1 \times U)-\tau(\mathcal Z))\xrightarrow{\tau^{-1}}  Y/(Y-\mathcal Z)\xrightarrow{p_X}  X/(X-Z)$$
be the composite morphism.

It is straightforward to check that
$\Phi_0: U/(U-Z^{new})\to X/(X-Z)$ is the composite morphism
$U/(U-Z^{new})\xrightarrow{can} X/(X-Z^{new})\xrightarrow{p} X/(X-Z)$.
The property (iv) of the polinomial $h$ from
Theorem \ref{Major} yields
that the morphism
$\Phi_1: U/(U-Z^{new})\to X/(X-Z)$ takes $U/(U-Z^{new})$ to
the distinguished point $*$ in $X/(X-Z)$.
\end{proof}

\section{Coisin complex and strict homotopy invariance}\label{cous_complex_section}
Let $k$ be a field.
Let $A: SmOp/k \to Gr-Ab$ is a cohomology theory on the category $SmOp/k$
in the sense of \cite[Sect. 1]{PS}.
Let $\mathcal O$ be the semi-local ring of finitely many {\bf closed points} on a
$k$-smooth irreducible affine $k$-variety $X$, $d=dim X$. Let $U=\text{Spec}(\mathcal O)$.
The main result of the present section and of the paper states that
the Cousin complex of $U$ associated with the theory $A$ is exact.
A relative version of that result is proved too.
\begin{thm}\label{cousin_1}
For any integer $n$ the Cousin complex
$$0\to A^n(U) \to A^n(\eta) \xrightarrow{\partial} \oplus_{x\in U^{(1)}} A^{n+1}_{x}(U) \xrightarrow{\partial} ... \xrightarrow{\partial} \oplus_{x\in U^{(d)}}A^{n+d}_{x}(U) \to 0$$
is exact. If $d=1$ and $\mathcal O$ is local, then the sequence
$0\to A^n(U) \to A^n(\eta) \xrightarrow{\partial} A^{n+1}_{x}(U) \to 0$
is exact.
\end{thm}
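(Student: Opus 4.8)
The plan is to deduce exactness of the Cousin complex from the moving lemmas (Theorems \ref{A_moving_lemma} and \ref{An extended_moving_lemma}) by the standard coniveau-spectral-sequence argument of the Bloch--Ogus--Gabber theorem in the axiomatic form of Colliot-Th\'el\`ene--Hoobler--Kahn; the ``effacement'' input, which in characteristic zero is furnished by Gabber's presentation lemma, is here furnished by the theory of nice triples through Theorem \ref{Major}. First I would set up the coniveau exact couple for $A$ on $U$. Since $A\colon SmOp/k\to Gr\text{-}Ab$ is a cohomology theory in the sense of \cite[Sect.~1]{PS}, it carries cohomology groups with supports $A^q_Z(W)$ satisfying localization long exact sequences, Nisnevich excision and homotopy invariance, and moreover $A^q(U)=\varinjlim_V A^q(V)$ and $A^q_{Z\cap U}(U)=\varinjlim_V A^q_{Z\cap V}(V)$, the colimits running over the affine open neighbourhoods $V\subseteq X$ of $x_1,\dots,x_n$. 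Put $A^q_{(p)}(U)=\varinjlim_{Z}A^q_Z(U)$, the colimit over closed $Z\subseteq U$ of codimension $\ge p$; the localization sequences assemble into exact sequences
\[
A^m_{(p+1)}(U)\to A^m_{(p)}(U)\xrightarrow{\ \beta_p\ }\textstyle\bigoplus_{x\in U^{(p)}}A^m_x(U)\xrightarrow{\ \gamma_p\ }A^{m+1}_{(p+1)}(U)\to A^{m+1}_{(p)}(U),
\]
and, by the usual excision identification of the relative terms of this couple with the $\bigoplus_{x\in U^{(p)}}A^{m}_x(U)$, the complex in the statement is $\bigl(\bigoplus_{x\in U^{(p)}}A^{n+p}_x(U),\, d_p=\beta_{p+1}\circ\gamma_p\bigr)_{p\ge0}$, augmented by $\beta_0\colon A^n(U)\to A^n(\eta)$.

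The whole theorem then reduces to the single assertion
\[
(\star)\qquad A^m_{(c)}(U)\longrightarrow A^m_{(c-1)}(U)\ \text{ is the zero map for all }c\ge1\text{ and all }m.
\]
Granting $(\star)$, applied with $c=p+1$ it forces $\beta_p$ injective and $\gamma_p$ surjective, so each $0\to A^{n+p}_{(p)}(U)\xrightarrow{\beta_p}\bigoplus_{U^{(p)}}A^{n+p}_x(U)\xrightarrow{\gamma_p}A^{n+p+1}_{(p+1)}(U)\to0$ is short exact; hence $\ker d_p=\ker\gamma_p=\operatorname{im}\beta_p=\beta_p(\operatorname{im}\gamma_{p-1})=\operatorname{im}d_{p-1}$, giving exactness at every spot $p\ge1$; exactness at the first two spots is $\ker\beta_0=0$ and $\ker\gamma_0=\operatorname{im}\beta_0\cong A^n(U)$; and since $\dim U=d$ we have $A^m_{(d+1)}(U)=0$, so $\beta_d$ is an isomorphism, the top differential is onto, and the displayed $d=1$ statement follows as a special case.

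Finally, $(\star)$ is exactly what the moving lemmas deliver. I would represent a class of $A^m_{(c)}(U)$, after spreading out, by $\alpha\in A^m_Z(X)$ for $X$ a smooth affine irreducible variety containing $x_1,\dots,x_n$ and $Z\subseteq X$ closed of codimension $\ge c$; replacing each irreducible component of $Z$ by an irreducible closed subset of codimension exactly $c$ (possible because $k[X]$ is catenary) we may assume $Z$ of pure codimension $c$. For $c\ge2$, Theorem \ref{An extended_moving_lemma} produces $Z^{new}\supseteq Z$ of pure codimension $c-1$ and a pointed Nisnevich-sheaf morphism $\Phi_t\colon\Aff^1\times\bigl(U/(U-Z^{new})\bigr)\to X/(X-Z)$ with $\Phi_0=\bigl(U/(U-Z^{new})\xrightarrow{can}X/(X-Z^{new})\xrightarrow{p}X/(X-Z)\bigr)$ and $\Phi_1$ constant at the base point. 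Applying $A^m$, identifying the reduced cohomology of the quotient sheaves with cohomology with supports, invoking homotopy invariance ($i_0^*\Phi_t^*=i_1^*\Phi_t^*$ on reduced classes) and the fact that $\Phi_1$ annihilates reduced classes, one gets that $A^m_Z(X)\to A^m_{Z^{new}}(X)\xrightarrow{can^*}A^m_{Z^{new}\cap U}(U)$ is zero; since this composite computes the image of $\alpha$ in $A^m_{(c-1)}(U)$, that image vanishes. The case $c=1$ is identical with Theorem \ref{A_moving_lemma} in place of Theorem \ref{An extended_moving_lemma} (equivalently the latter with $Z^{new}=X$): it shows $A^m_Z(X)\to A^m(X)\xrightarrow{can^*}A^m(U)=A^m_{(0)}(U)$ is zero. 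This establishes $(\star)$ and hence the theorem.

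The genuinely hard part — the construction of the homotopy $\Phi_t$ — is already carried out (Theorems \ref{Major}, \ref{A_moving_lemma}, \ref{An extended_moving_lemma}) and is used here as a black box. Within the present section the main obstacle is therefore only a formal one: making precise that the functor $A$ on smooth pairs may legitimately be evaluated on the sheaf morphisms $\Phi_t$ and on the quotient objects $X/(X-Z)$, with $\tilde A^m(X/(X-Z))=A^m_Z(X)$ and with homotopy invariance applicable to $\Phi_t$ (this rests on the excision, localization and homotopy axioms of \cite[Sect.~1]{PS} together with the continuity of $A$), and checking the compatibility of the various ``enlarge the support'' maps with the colimits defining the $A^m_{(c)}(U)$; the catenary enlargement of $Z$ to pure codimension is the only mildly non-formal point on the cohomological side.
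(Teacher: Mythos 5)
Your proposal is correct and is precisely the elaboration of what the paper leaves as ``it follows in the standard manner'': you have filled in the coniveau exact couple, the reduction to the effacement statement $(\star)$, and the derivation of $(\star)$ from Theorem~\ref{An extended_moving_lemma} via homotopy invariance — exactly the Bloch--Ogus/Colliot-Th\'el\`ene--Hoobler--Kahn dévissage the author has in mind. Since the paper's own proof is a one-line citation to this standard argument, the two proofs are the same in substance, with yours being the explicit version.
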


\begin{proof}
It follows in the standard manner from Theorem \ref{An extended_moving_lemma}
and the definition of a cohomology theory on $SmOp/k$.
\end{proof}

\begin{cor}\label{cousin_1_cor}
The Zariski sheaf $\mathcal A^n_{Zar}$ on $X$, associated with the presheaf $W\mapsto A^n(W)$ has a flask resolution of the form
$$0\to \mathcal A^n_{Zar} \to \eta_*(A^n(\eta)) \xrightarrow{\partial} \oplus_{x\in X^{(1)}}(i_x)_* (A^{n+1}_{x}(X)) \xrightarrow{\partial} ... \xrightarrow{\partial}
\oplus_{x\in X^{(d)}}(i_x)_*(A^{n+d}_{x}(X)) \to 0.$$
\end{cor}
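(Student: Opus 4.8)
The plan is to recognize the complex in the statement as the Cousin complex of $X$ with coefficients in the theory $A$, and to deduce the corollary from Theorem~\ref{cousin_1} together with standard facts about the coniveau formalism. Two things have to be checked: that every term of the complex other than $\mathcal A^n_{Zar}$ is a flasque Zariski sheaf, and that the complex (with its augmentation $\mathcal A^n_{Zar}\to\eta_*(A^n(\eta))$) is exact. Exactness, being a statement about a bounded complex of abelian sheaves, is checked stalk by stalk, and the stalk at a point $x\in X$ turns out to be precisely the Cousin complex of $\Spec\mathcal O_{X,x}$ with coefficients in $A$ — which is exact by Theorem~\ref{cousin_1} when $x$ is a closed point.

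For flasqueness I would argue as follows. The sheaf $\eta_*(A^n(\eta))$ is the push-forward of a constant sheaf along $\eta\hookrightarrow X$, hence is the constant sheaf on $X$ with value $A^n(k(X))$, and a constant sheaf on an irreducible space is flasque. For $x\in X^{(p)}$ the sheaf $(i_x)_*(A^{n+p}_x(X))$ is the push-forward along the closed immersion $i_x\colon\overline{\{x\}}\hookrightarrow X$ of the constant sheaf with value $A^{n+p}_x(X)$ on the irreducible scheme $\overline{\{x\}}$, hence again flasque, since push-forward along a closed immersion preserves flasqueness. Since $X$ is Noetherian, an arbitrary direct sum of flasque sheaves is flasque (it is a filtered colimit of finite partial sums), so each $\bigoplus_{x\in X^{(p)}}(i_x)_*(A^{n+p}_x(X))$ is flasque. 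Hence only the exactness remains.

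To identify the stalk at $x\in X$ I would use the standard features of the coniveau formalism: the stalk of $\mathcal A^n_{Zar}$ at $x$ is $A^n(\mathcal O_{X,x})$ (by the continuity used to extend $A$ to local rings); a point $y$ is a generization of $x$ of codimension $p$ in $X$ exactly when it is a codimension $p$ point of $\Spec\mathcal O_{X,x}$; and by excision for the groups with support one has $A^{n+p}_y(X)=A^{n+p}_y(\Spec\mathcal O_{X,x})$. Therefore the stalk at $x$ of the augmented complex is the Cousin complex of $\Spec\mathcal O_{X,x}$ associated with $A$, with augmentation $A^n(\mathcal O_{X,x})\hookrightarrow A^n(k(X))$. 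When $x$ is a closed point of $X$ this complex is exact by Theorem~\ref{cousin_1} applied to the single point $x$. For an arbitrary $x$ I would choose a closed point $z\in\overline{\{x\}}$, so that $\mathcal O_{X,x}$ is a localization of $\mathcal O_{X,z}$, whose Cousin complex is exact by the closed-point case; exactness then passes to the localization by the standard continuity/localization argument for coniveau complexes. Granting this, the complex of the statement is exact at every stalk and its terms other than $\mathcal A^n_{Zar}$ are flasque, which is exactly the asserted flasque resolution. The only step that is not formal given Theorem~\ref{cousin_1}, and the one I expect to require care, is this last passage from the local rings of closed points to the local ring of an arbitrary point; it is well known in the coniveau/Gersten setting and may either be quoted or spelled out using the localization sequences for the groups with support (the Cousin complex of a localization of $\mathcal O_{X,z}$ being obtained from that of $\mathcal O_{X,z}$ in a manner compatible with acyclicity).
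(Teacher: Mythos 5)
Your strategy is the right one: exhibit the terms of the complex as flasque sheaves and then check exactness stalkwise, invoking Theorem~\ref{cousin_1}. The flasqueness argument is fine — $\eta_*(A^n(\eta))$ is the constant sheaf $A^n(k(X))$ on the irreducible $X$, each $(i_x)_*(A^{n+p}_x(X))$ is the push-forward along a closed immersion of a constant sheaf on the irreducible $\overline{\{x\}}$, and filtered colimits of flasques on a Noetherian space are flasque. The identification of the stalk of the augmented complex at $x$ with the Cousin complex of $\Spec\mathcal O_{X,x}$ is also correct, using excision and catenarity to match codimensions.

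The gap is in the last step, the one you yourself flagged: the reduction from an arbitrary point $x$ to a closed point $z\in\overline{\{x\}}$. The claim that ``exactness passes to the localization $\mathcal O_{X,x}$ of $\mathcal O_{X,z}$ by the standard continuity/localization argument for coniveau complexes'' is not a valid argument. The Cousin complex of $\Spec\mathcal O_{X,x}$ is not a localization or filtered colimit of the Cousin complex of $\Spec\mathcal O_{X,z}$ (the sections over the opens $\Spec(\mathcal O_{X,z})_f$, $f\notin\mathfrak p_x$, compute cohomology of those opens, which need not vanish), and exactness of the Gersten/Cousin complex of a regular local ring does not formally pass to its further localizations. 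Fortunately this detour is unnecessary. Since $X$ is of finite type over a field it is a Jacobson scheme: every nonempty closed subset contains a closed point. Consequently a Zariski sheaf $\mathcal F$ on $X$ vanishes as soon as $\mathcal F_z=0$ for all \emph{closed} $z$ (the support of any section is closed, and if it contains no closed point it is empty), and a complex of sheaves is exact as soon as its stalks at closed points are exact. Applied to the cohomology sheaves of the augmented complex, this shows that Theorem~\ref{cousin_1} applied to a single closed point $z$ already gives exactness of the complex of sheaves; the case of non-closed $x$ does not have to be treated at all. With the unjustified localization claim replaced by this Jacobson argument, your proof is complete.
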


\begin{lem}
For any field $K$ containing $k$
the sequence
$$0\to A^n(K) \xrightarrow{i} A^n(K(t)) \xrightarrow{\partial} \oplus_{x\in \Aff^1_K} A^{n+1}_x(\Aff^1_K) \to 0$$
is exact.
\end{lem}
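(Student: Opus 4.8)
The plan is to recognise the displayed sequence, after rewriting its left-hand term by homotopy invariance, as the Cousin complex of the affine line $\Aff^1_K$, and then to deduce its exactness from the localization and excision properties of $A$ together with the single non-formal input supplied by the one-dimensional local case of Theorem~\ref{cousin_1}. Throughout I use that $A$ is continuous, so that $A^*(\Spec L)$ for a field extension $L/k$ and $A^*(\mathcal O_{\Aff^1_K,x})$ for a point $x$ of $\Aff^1_K$ make sense as filtered colimits; in particular $A^n(K(t))=\operatorname{colim}_W A^n(W)$ over the nonempty open subschemes $W\subseteq\Aff^1_K$, each of which has finite complement.

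First I would note that the projection $\pi\colon\Aff^1_K\to\Spec K$ induces, by homotopy invariance, an isomorphism $\pi^*\colon A^n(K)\xra{\sim}A^n(\Aff^1_K)$, and that the map $i$ of the lemma is $\pi^*$ followed by restriction to the generic point; so it suffices to prove exactness of
\begin{equation*}
0\longrightarrow A^n(\Aff^1_K)\longrightarrow A^n(K(t))\xra{\ \partial\ }\bigoplus_{x}A^{n+1}_x(\Aff^1_K)\longrightarrow 0,
\end{equation*}
where $x$ runs over the closed points of $\Aff^1_K$ and $\partial=(\partial_x)_x$ is the family of residue homomorphisms. \emph{Left exactness:} the composite $A^n(K)\to A^n(K(t))$ factors through $A^n(\mathcal O_{\Aff^1_K,0})$, where $\mathcal O_{\Aff^1_K,0}$ is the discrete valuation ring at the origin, whose residue field is $K$; the first arrow is split injective (a splitting being restriction to the closed point), and the second is injective by the ``$d=1$, $\mathcal O$ local'' assertion of Theorem~\ref{cousin_1}. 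The same argument in each degree $m$ shows $A^m(\Aff^1_K)\to A^m(K(t))$ is injective, and hence, by the localization sequence of $(\{x\}\hookrightarrow\Aff^1_K\hookleftarrow\Aff^1_K-x)$, that $A^m_x(\Aff^1_K)\to A^m(\Aff^1_K)$ is the zero map for every closed point $x$ and every $m$.

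Next, \emph{right exactness:} given a family $(\alpha_x)_x$ supported on a finite set $S$ of closed points, the localization sequence of $(S\hookrightarrow\Aff^1_K\hookleftarrow\Aff^1_K-S)$ reads $A^n(\Aff^1_K-S)\xra{\partial_S}\bigoplus_{x\in S}A^{n+1}_x(\Aff^1_K)\to A^{n+1}(\Aff^1_K)$ (excision splitting the supported term); the last map is zero by the previous paragraph, so $\partial_S$ is surjective, and lifting $(\alpha_x)_{x\in S}$ to some $\gamma\in A^n(\Aff^1_K-S)$ and putting $\beta=\gamma|_{K(t)}$ yields $\partial\beta=(\alpha_x)_x$, the residues outside $S$ being zero because $\gamma$ extends over those points. \emph{Middle exactness:} if $\partial\beta=0$, choose a nonempty open $W_0$ with finite complement $T$ and a class $\gamma_0\in A^n(W_0)$ restricting to $\beta$; in the localization sequence $A^n(\Aff^1_K)\to A^n(W_0)\xra{\partial'}\bigoplus_{y\in T}A^{n+1}_y(\Aff^1_K)$, naturality of the boundary (obtained by restricting to $\mathcal O_{\Aff^1_K,y}$) identifies the $y$-component of $\partial'\gamma_0$ with $\partial_y\beta=0$, so $\partial'\gamma_0=0$ and $\gamma_0$ lifts to some $\tilde\gamma\in A^n(\Aff^1_K)$ with $\tilde\gamma|_{K(t)}=\beta$; together with $\pi^*$ this exhibits $\beta$ in the image of $i$.

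I expect the main obstacle to be precisely the injectivity $A^n(\mathcal O_{\Aff^1_K,0})\hookrightarrow A^n(K(t))$ used in the left-exactness step: this is the one-dimensional local case of Theorem~\ref{cousin_1}, and hence ultimately a consequence of the moving lemma (Theorem~\ref{An extended_moving_lemma}); everything else --- the reduction to a finite set of points, the vanishing of $A^m_x(\Aff^1_K)\to A^m(\Aff^1_K)$, and the two lifting arguments --- is a formal manipulation of the localization and excision sequences and of homotopy invariance.
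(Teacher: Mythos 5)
Your proof is correct and follows essentially the same route as the paper's: both derive the injectivity of $i$ from a retraction through $A^n(\mathcal O_{\Aff^1_K,0})$ whose nontrivial ingredient is the one-dimensional local case of Theorem~\ref{cousin_1}, and both obtain middle exactness and surjectivity of $\partial$ from the Cousin (colimit-localization) exact sequence for $\Aff^1_K$ together with the injectivity of $i$ in the next degree. The only difference is one of exposition: you spell out the colimit and excision manipulations that assemble the localization sequences for finite closed subsets into the displayed Gersten-type sequence, whereas the paper simply invokes this as ``part of the long exact sequence.''
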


\begin{proof}[Proof of the lemma]
By the definition $\partial=\Sigma  \partial_x$, where the sum runs
over all the closed points of $\Aff^1_K$. Since
$A^n(K)=A^n(\Aff^1_K)$, hence
the composite map
$\partial \circ i$ iz zero.
Particularly, $\partial_x \circ i=0$.
Hence $Im(i)$ is in $Ker(\partial_x)$.
By Theorem \ref{cousin_1} applied to $\text{Spec}(\mathcal O_{\Aff^1,x})$
one has $Im(i)$ is in $A^n(\mathcal O_{\Aff^1,x})$. Taking the origin of coordinates as the point $x$
and consider the composition map $i^*_0 \circ i: A^n(K) \to A^n(K)$, where
$i^*_0: A^n(\text{Spec}(\mathcal O_{\Aff^1,0})) \to A^n(\text{Spec}(K))$
is the pull-back map. Since $i^*_0 \circ i=id^*=id_{A^n(K)}$ is the identity,
hence $i$ is injective (for any integer $n$).
The sequence
$$A^n(\Aff^1_K) \xrightarrow{i} A^n(K(t)) \xrightarrow{\partial} \oplus_{x\in \Aff^1_K} A^{n+1}_x(\Aff^1_K) \to  A^n(\Aff^1_K) \xrightarrow{i} A^{n+1}(K(t))$$
is a part of the long exact sequence. The maps $i$ are injective.
Thus the maps
$\partial$ are surjective for any integer $n$.
\end{proof}

We need in the following modification of Theorem \ref{cousin_1}.
\begin{thm}\label{cousin_3}
Under the notation and hypotheses of Theorem \ref{cousin_1}
let $C$ be a $k$-smooth absolutely irreducible curve.
Let $x \in X$ be a point, $c\in C\times_k k(x)$ be a closed point and $\eta_x \in C\times_k k(x)$ be the generic point.
Then the cousin complex
$$0\to A^n(\mathcal O_{C\times X,c}) \xrightarrow{j^*} A^n(\mathcal O_{C\times X,\eta_x}) \xrightarrow{\partial_c} A^{n+1}_{x}(C\times X) \to 0$$
is exact. Particularly, $A^n(\mathcal O_{C\times X,c})=Ker(\partial_c)$ and there is a well-defined map
$c^*: Ker(\partial_c)=A^n(\mathcal O_{C\times X,c}) \to A^n(c)$.
\end{thm}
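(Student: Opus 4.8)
\medskip
\noindent\textbf{Proof strategy.}
The plan is to read the three-term complex as the fibrewise Gersten fragment of the smooth relative curve $C\times_k X\to X$ over the point $x$, localized at $c$, and to deduce its exactness from Theorem~\ref{cousin_1} --- applied both to the $k$-smooth variety $C\times_k X$ itself and to the base local ring $\mathcal{O}_{X,x}$ --- together with the Lemma above asserting exactness of $0\to A^n(K)\to A^n(K(t))\to\bigoplus_{y}A^{n+1}_y(\Aff^1_K)\to 0$, which supplies the input in the curve direction.

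First I would fix the geometry. Shrinking $C$ to an affine open and $X$ to a $k$-smooth affine open still containing $x_1,\dots,x_n$ and $x$, the product $W:=C\times_k X$ becomes $k$-smooth affine irreducible of dimension $d+1$. Since $C$ is geometrically irreducible, $\overline{\{\eta_x\}}=C\times_k\overline{\{x\}}$ is irreducible with generic point $\eta_x$, and in a neighbourhood of $c$ it agrees with the $k(x)$-smooth curve $C_{k(x)}:=C\times_k k(x)$; hence $\mathcal{O}_{\overline{\{\eta_x\}},c}=\mathcal{O}_{C_{k(x)},c}$ is a discrete valuation ring with fraction field $k(\eta_x)$ and residue field $k(c)$, while $\mathcal{O}_{C\times X,\eta_x}$ is the localization of $\mathcal{O}_{C\times X,c}$ at the prime ideal of $\overline{\{\eta_x\}}$. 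Thus $j^*$ is flat pull-back along the pro-open immersion $\Spec\mathcal{O}_{C\times X,\eta_x}\hookrightarrow\Spec\mathcal{O}_{C\times X,c}$, the map $\partial_c$ is the residue attached to the codimension-one specialization $\eta_x\rightsquigarrow c$ (equivalently, the residue of the discrete valuation ring $\mathcal{O}_{C_{k(x)},c}$ precomposed with restriction to the closed point $\eta_x$ of $\Spec\mathcal{O}_{C\times X,\eta_x}$), and $\partial_c\circ j^*=0$.

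Exactness at the left: by Theorem~\ref{cousin_1} for the $k$-smooth affine variety $W$ --- reduced from the arbitrary point $c$ to a closed point in the standard way --- the Cousin complex of $\mathcal{O}_{W,c}$ is exact, so $A^n(\mathcal{O}_{W,c})\hookrightarrow A^n(k(W))$; since this injection factors through $A^n(\mathcal{O}_{W,\eta_x})$, the map $j^*$ is injective. Exactness in the middle and surjectivity of $\partial_c$: here the decisive input is the fibre. The curve $C_{k(x)}$ is smooth affine over the field $K:=k(x)$, and the Lemma above, extended to $C_{k(x)}$ by choosing a finite morphism $C_{k(x)}\to\Aff^1_K$ and transporting along the transfer maps of $A$ (as in the proof of Theorem~\ref{cousin_1}) and combined with continuity of $A$ to descend from $K$ to its finitely generated smooth $k$-subalgebras, yields exactness of $0\to A^n(\mathcal{O}_{C_{k(x)},c})\to A^n(k(\eta_x))\xrightarrow{\partial_c}A^{n+1}_c(C_{k(x)})\to 0$. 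It then remains to transport this into $W$: the closed immersion $\overline{\{\eta_x\}}\hookrightarrow W$ is, near $c$, a regular immersion of regular schemes to which the Gysin/purity isomorphisms of a cohomology theory in the sense of \cite{PS} apply, identifying $A^{*}_c(W)$ with the corresponding twisted groups of $C_{k(x)}$ compatibly with $\partial_c$; and Theorem~\ref{cousin_1} applied to $\mathcal{O}_{X,x}$ accounts for the remaining (horizontal) codimension-one residues, pinning $\Ker(\partial_c)$ down to $\Im(j^*)=A^n(\mathcal{O}_{C\times X,c})$. Finally, once $A^n(\mathcal{O}_{C\times X,c})=\Ker(\partial_c)$ is established, the map $c^*$ is the restriction of cohomology along the closed immersion $c=\Spec k(c)\hookrightarrow\Spec\mathcal{O}_{C\times X,c}$; it lands in $A^n(c)$ and is well defined because $A$ is a presheaf on $SmOp/k$.

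I expect the transport step to be the main obstacle: one has to keep track of exactly which codimension-one points of $C\times_k X$ enter $\partial_c$, and of the Thom twists in the purity isomorphisms, so that the exactness established in the fibre $C_{k(x)}$ (coming from the Lemma above) and the exactness on the base $\mathcal{O}_{X,x}$ (coming from Theorem~\ref{cousin_1}) fit together into this single three-term statement --- equivalently, so that the only obstruction to extending a class from $\mathcal{O}_{C\times X,\eta_x}$ to $\mathcal{O}_{C\times X,c}$ is its residue at $c$.
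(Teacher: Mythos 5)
Your argument for left exactness (injectivity of $j^*$) matches the paper's: both deduce it from Theorem~\ref{cousin_1} applied to the $k$-smooth variety $C\times_k X$. But for exactness in the middle and surjectivity of $\partial_c$, you take a genuinely different and much heavier route, and in fact you overlook the observation that makes the paper's proof almost immediate. The paper simply writes down the localization long exact sequence of the cohomology theory $A$ associated to removing the closed point $c$ from $\Spec\mathcal O_{C\times X,c}$:
$$
A^n(\mathcal O_{C\times X,c}) \xrightarrow{j^*} A^n(\mathcal O_{C\times X,\eta_x}) \xrightarrow{\partial_c} A^{n+1}_{x}(C\times X) \to A^{n+1}(\mathcal O_{C\times X,c}) \xrightarrow{j^*} A^{n+1}(\mathcal O_{C\times X,\eta_x}),
$$
and observes that \emph{both} $j^*$ maps in it are injective by Theorem~\ref{cousin_1}. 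Injectivity of the left $j^*$ is left exactness; exactness of the long sequence at $A^n(\mathcal O_{C\times X,\eta_x})$ is middle exactness; and injectivity of the right $j^*$ forces the connecting map $A^{n+1}_x(C\times X)\to A^{n+1}(\mathcal O_{C\times X,c})$ to vanish, hence $\partial_c$ is surjective. No purity, no transfers, no fibre/base decomposition are needed.

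Your proposal instead tries to rebuild the result by a fibrewise Gersten-style argument: exactness on the fibre $C_{k(x)}$ from the Lemma, transported into $C\times X$ by purity/Gysin isomorphisms, combined with Theorem~\ref{cousin_1} on the base $\mathcal O_{X,x}$. You yourself flag the ``transport step'' as the main obstacle, and you are right to: keeping track of Thom twists in a general theory $A\colon SmOp/k\to Gr\text{-}Ab$ in the sense of \cite{PS}, and making the last sentence (``Theorem~\ref{cousin_1} applied to $\mathcal O_{X,x}$ accounts for the remaining horizontal residues, pinning $\Ker(\partial_c)$ down to $\Im(j^*)$'') into an actual proof, would be a nontrivial amount of work and requires extra structural input (purity isomorphisms, transfers) that the statement does not actually need. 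So while your strategy is not wrong in spirit, it misses the key simplification: once Theorem~\ref{cousin_1} is available in two adjacent degrees, the full three-term exactness drops out of the localization sequence for free.

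One more small point: Theorem~\ref{cousin_1} as stated is about semi-local rings at \emph{closed} points, whereas $c$ need not be a closed point of $C\times X$ if $x$ is not closed in $X$. Both you and the paper tacitly use the standard continuity reduction (writing $\mathcal O_{C\times X,c}$ as a filtered colimit of semi-local rings of closed points); you mention continuity explicitly for a different step, which is good, but the same device is needed to justify the two injectivity claims here.
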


\begin{proof}
The localization sequence
$$A^n(\mathcal O_{C\times X,c}) \xrightarrow{j^*} A^n(\mathcal O_{C\times X,\eta_x}) \xrightarrow{\partial} A^{n+1}_{x}(C\times X)  \to A^{n+1}(\mathcal O_{C\times X,c})
\xrightarrow{j^*} A^{n+1}(\mathcal O_{C\times X,\eta_x})$$
is exact. Both maps $j^*$ are injective by Theorem \ref{cousin_1}.
Thus the map $\partial$ is surjective.
\end{proof}

\begin{cor}\label{cousin_4}[of Theorem \ref{cousin_3}]
Under the notation and hypotheses of Theorem \ref{cousin_1}
let $x \in X$ be a point and $\eta_x \in \Aff^1_{k(x)}$ be the generic point.
Then the complex
$$0\to A^n_{x}(X) \xrightarrow{j^*\circ p^*} A^n(\mathcal O_{\Aff^1\times X,\eta_x}) \xrightarrow{\partial} \oplus_{c\in \Aff^1_{k(x)}} A^{n+1}_{x}(\Aff^1\times X) \to 0$$
is exact.
\end{cor}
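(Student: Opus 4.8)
The plan is to deduce the corollary from Theorem~\ref{cousin_3} by precisely the argument that derives the preceding Lemma from Theorem~\ref{cousin_1}: the Lemma is the special case $X=\Spec k(x)$, and the present statement is its relative-over-$x$ version. So I would localize everything over $x$, writing $R=\mathcal O_{X,x}$, so that $\Aff^1\times X$ over $x$ becomes $\Spec R[t]$, the point $\eta_x$ is the generic point of the fibre $\Aff^1_{k(x)}=\Spec k(x)[t]$, and the closed points $c$ of that fibre index the target of $\partial$; the fibre $\Aff^1_{k(x)}$ plays the role of $\Aff^1_K$ from the Lemma, and homotopy invariance of $A$ plays the role it does there.

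For exactness in the middle I would argue one closed point at a time. For each closed $c\in\Aff^1_{k(x)}$, Theorem~\ref{cousin_3} with the smooth curve $C=\Aff^1$ gives the short exact sequence
$$0\to A^n(\mathcal O_{\Aff^1\times X,c})\xrightarrow{\,j^*_c\,}A^n(\mathcal O_{\Aff^1\times X,\eta_x})\xrightarrow{\,\partial_c\,}A^{n+1}_c(\Aff^1\times X)\to 0,$$
so $j^*_c$ is injective and $\ker\partial_c=\operatorname{Im}j^*_c$; hence $\ker\partial=\bigcap_c\operatorname{Im}j^*_c$. A class pulled back along $p^*$ from $A^n_x(X)$ is defined on all of $\Spec R[t]$ and so lies in every $\operatorname{Im}j^*_c$, giving $\operatorname{Im}(j^*\circ p^*)\subseteq\ker\partial$. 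For the reverse inclusion I would invoke the Cousin-complex exactness of Theorem~\ref{cousin_1}, in its essentially smooth semi-local form, applied to the semi-localizations $\mathcal O_{\Aff^1\times X,\{c_1,\dots,c_r\}}$ and passed to the colimit over finite sets of closed points of the fibre: a class lying in every $\operatorname{Im}j^*_c$ is then the image of a class on $\Spec R[t]$, i.e. of an element of $A^n_x(X)$.

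For injectivity of $j^*\circ p^*$ and surjectivity of $\partial$ I would copy the Lemma. Using the closed point $c_0=\{t=0\}$, the class $(j^*\circ p^*)(\alpha)$ lifts along $j^*_{c_0}$ to $(p^*\alpha)|_{\mathcal O_{\Aff^1\times X,c_0}}$, whose restriction to the divisor $\{t=0\}\cong\Spec R$ is $\alpha$ again; together with the injectivity of $j^*_{c_0}$ from Theorem~\ref{cousin_3} this gives injectivity of $j^*\circ p^*$ in every degree. Plugging this into the colimit of the localization long exact sequences of $A$ --- in which
$$A^n(\mathcal O_{\Aff^1\times X,\eta_x})\xrightarrow{\,\partial\,}\bigoplus_c A^{n+1}_c(\Aff^1\times X)\longrightarrow A^{n+1}_x(X)\xrightarrow{\,j^*\circ p^*\,}A^{n+1}(\mathcal O_{\Aff^1\times X,\eta_x})$$
is exact --- the injectivity of the last arrow forces $\partial$ to be surjective. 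The one genuinely non-formal point is the colimit bookkeeping that passes from statements controlling one closed point of the fibre (or one semi-local scheme) at a time to the assertion about the infinite direct sum over all closed points of the fibre; this is organized exactly as in the proof of the Lemma, presenting $\Spec R[t]$, its Cousin complex, and its localization sequences as a filtered colimit of semi-local data.
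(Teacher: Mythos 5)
Your injectivity and surjectivity arguments match the paper exactly: injectivity of $j^*\circ p^*$ comes from specializing at the closed point $c_0=\{t=0\}$ and using the well-defined pull-back $c^*$ from Theorem~\ref{cousin_3}, and surjectivity of $\partial$ comes from the localization long exact sequence once $j^*$ is known injective in every degree (using $p^*$ an isomorphism by homotopy invariance). The place where you diverge is middle exactness, and there you introduce machinery the paper avoids.

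The paper gets the reverse inclusion $\ker\partial\subseteq\operatorname{Im}(j^*\circ p^*)$ for free: the localization long exact sequence already gives $\ker\partial=\operatorname{Im}\bigl(j^*:A^n_{\Aff^1_{k(x)}}(\Aff^1\times X)\to A^n(\mathcal O_{\Aff^1\times X,\eta_x})\bigr)$, and then the homotopy-invariance isomorphism $p^*:A^n_x(X)\xrightarrow{\sim}A^n_{\Aff^1_{k(x)}}(\Aff^1\times X)$ turns $\operatorname{Im}j^*$ into $\operatorname{Im}(j^*\circ p^*)$. No pointwise intersection over all closed $c$ and no semi-local colimit are needed. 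Your route via $\ker\partial=\bigcap_c\ker\partial_c=\bigcap_c\operatorname{Im}j^*_c$ and Theorem~\ref{cousin_1} on semi-localizations is not wrong in spirit, but the final step has a genuine gap: after the colimit you land on a class ``on $\Spec R[t]$,'' i.e.\ an element of $A^n$ \emph{without supports}, whereas the target you need to identify is $A^n_x(X)$, which carries supports in $\overline{\{x\}}$. To make your argument close, you would still have to bring in exactly the two ingredients the paper uses --- the localization sequence relating the with-supports and without-supports groups, and the isomorphism $p^*$ --- at which point the pointwise/colimit detour becomes redundant. In short, the proposal is salvageable but strictly longer than, and ultimately reducible to, the paper's one-line appeal to the localization sequence and homotopy invariance.
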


\begin{proof}[Proof of the corollary]
By the definition $\partial=\Sigma  \partial_c$, where the sum runs
over all the closed points of $\Aff^1_{k(x)}$.
The composite map
$\partial \circ i$ iz zero.
Particularly, $\partial_c \circ i=0$.
Hence $Im(j^*\circ p^*)$ is in $Ker(\partial_c)$.

By Theorem \ref{cousin_3}
there is a well-defined pull-back map
$c^*: Ker(\partial_c)=A^n(\mathcal O_{C\times X,c}) \to A^n(c)$.
Taking $c$ to be the origing of coordinates on $\Aff^1_{k(x)}$,
we see that $i^*_0 \circ j^* \circ p^*=id^*=id_{A^n_{x}(X)}$.
Hence the map $j^* \circ p^*$ is injective. The map
$p^*: A^n_{x}(X) \to A^n_{\Aff^1_{k(x)}}(\Aff^1 \times X)$
is an isomorphism by the homotopy invariance of the theory $A$. Thus the map $j^*$ is injective.
The sequence
$$A^n_{\Aff^1_{k(x)}}(\Aff^1 \times X) \xrightarrow{j^*} A^n(\mathcal O_{\Aff^1\times X,\eta_x}) \xrightarrow{\partial} \oplus_{c\in \Aff^1_{k(x)}} A^{n+1}_x(\Aff^1 \times X)
\xrightarrow{}  A^{n+1}_{\Aff^1_{k(x)}}(\Aff^1 \times X) \xrightarrow{j^*} A^{n+1}(\mathcal O_{\Aff^1\times X,\eta_x})$$
is a part of the long exact sequence. The maps $j^*$ are injective.
Thus the maps
$\partial$ are surjective for any integer $n$.
Whence the lemma.
\end{proof}

\section{Strict homotopy invariance of sheaves $\mathcal A^n_{Nis}$ }\label{StrHomInvSect}
For basic definitions used in this section see \cite[Sect.1]{PS}. We will
work below with $\mathbb Z$-graded cohomology theories on the category
$SmOp/k$ and will suppose that the boundary maps $\partial_{(X,U)}$
have degree $+1$.
\begin{thm}\label{StrHomInv}
Let $k$ be a field and let $A: SmOp/k \to Gr-Ab$ be a cohomology theory on
the category $SmOp/k$ in the sense of \cite[Sect.1]{PS}. Let
$\mathcal A^n_{Zar}$ be the Zariski sheaf associated with the presheaf
$W\mapsto A^n(W)$. Then $\mathcal A^n_{Zar}$ is homotopy invariant
and even it is strictly homotopy invariant on $(Sm/k)_{Zar}$.
\end{thm}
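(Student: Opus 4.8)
The plan is to deduce Theorem~\ref{StrHomInv} from the exactness of the Cousin complex in the standard way, through the flasque Cousin resolution. Recall that ``$\mathcal A^n_{Zar}$ is strictly homotopy invariant on $(Sm/k)_{Zar}$'' means precisely that for the projection $\pr\colon\Aff^1\times X\to X$ the pullback $\pr^*\colon H^q_{Zar}(X,\mathcal A^n_{Zar})\to H^q_{Zar}(\Aff^1\times X,\mathcal A^n_{Zar})$ is an isomorphism for every $k$-smooth $X$ and every $q\ge 0$; the case $q=0$ is ordinary homotopy invariance of the sheaf, so it suffices to establish these isomorphisms for all $q$.

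First I would reduce to a statement about a single affine line. By Corollary~\ref{cousin_1_cor} --- which, being about the coniveau/Cousin filtration, extends from smooth affine varieties to arbitrary $k$-smooth schemes by locality and to essentially $k$-smooth schemes by continuity of $A$ --- the sheaf $\mathcal A^n_{Zar}$ admits a flasque Cousin resolution, so $H^q_{Zar}(W,\mathcal A^n_{Zar})$ is the $q$-th cohomology of the Cousin complex of $W$; moreover, for $W$ semi-local and essentially $k$-smooth, Theorem~\ref{cousin_1} says this complex is acyclic in positive degrees with $H^0=A^n(W)$. Since Zariski cohomology and $A$ commute with the relevant cofiltered limits, the stalk at $x\in X$ of $R^q\pr_*\mathcal A^n_{Zar}$ is $H^q(\Aff^1_{\mathcal O_{X,x}},\mathcal A^n_{Zar})$, so it is enough to prove
\[
H^0(\Aff^1_{\mathcal O},\mathcal A^n_{Zar})=A^n(\mathcal O)\quad\text{and}\quad H^q(\Aff^1_{\mathcal O},\mathcal A^n_{Zar})=0\ \ (q>0)
\]
for every essentially $k$-smooth \emph{local} ring $\mathcal O$ over $k$, compatibly with $\pr^*$ and the zero section. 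Granting this, the natural map $\mathcal A^n_{Zar}\to R^0\pr_*\mathcal A^n_{Zar}$ is an isomorphism and $R^q\pr_*\mathcal A^n_{Zar}=0$ for $q>0$, so the Leray spectral sequence for $\pr$ degenerates to the edge isomorphism $\pr^*\colon H^p_{Zar}(X,\mathcal A^n_{Zar})\xrightarrow{\ \sim\ }H^p_{Zar}(\Aff^1\times X,\mathcal A^n_{Zar})$ for all $p$.

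To prove the displayed claim I would analyse the Cousin complex of $\Aff^1_{\mathcal O}$ via the projection $\Aff^1_{\mathcal O}\to\Spec\mathcal O$. A point of $\Aff^1_{\mathcal O}$ lying over a point $\mathfrak p\in\Spec\mathcal O$ of codimension $c$ is either the generic point $\eta_{\mathfrak p}$ of the fibre $\Aff^1_{k(\mathfrak p)}$, which has codimension $c$, or a closed point of $\Aff^1_{k(\mathfrak p)}$, which has codimension $c+1$. Filtering the Cousin complex of $\Aff^1_{\mathcal O}$ by $\operatorname{codim}\mathfrak p$ --- a filtration stable under the Cousin differential, being cut out by the fibres of a flat morphism --- the associated graded in filtration degree $c$ is the direct sum, over the codimension-$c$ points $\mathfrak p$, of the two-term complex of Corollary~\ref{cousin_4} attached to $\mathfrak p$ (placed in cohomological degrees $c$ and $c+1$); its codimension-zero instance is the unlabelled Lemma on $A^n(K)\to A^n(K(t))$ stated just before Theorem~\ref{cousin_3}. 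By that corollary, together with continuity of $A$, each of these complexes is, via $\pr^*$, a resolution of the $\mathfrak p$-contribution to the Cousin complex of $\Spec\mathcal O$. Hence the spectral sequence of the filtration has $E_1$-page the Cousin complex of $\Spec\mathcal O$ with $d_1$ the Cousin differential; applying Theorem~\ref{cousin_1} to the semi-local essentially smooth scheme $\Spec\mathcal O$, this $E_1$-complex is acyclic in positive degrees with $H^0=A^n(\mathcal O)$, so the spectral sequence degenerates at $E_2$ and yields the displayed claim, all identifications being natural.

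The step I expect to be the main obstacle is the bookkeeping in the last paragraph: checking that the codimension filtration on the Cousin complex of $\Aff^1_{\mathcal O}$ is genuinely preserved by the Cousin differential, that its associated graded is \emph{exactly} the direct sum of the two-term complexes of Corollary~\ref{cousin_4} with the cohomological degrees matched up correctly, and above all that the resulting quasi-isomorphism with the Cousin complex of $\Spec\mathcal O$ is literally induced by $\pr^*$ --- this last point, together with the retraction by the zero section, is what lets the Leray argument conclude an honest isomorphism rather than an abstract one. The ancillary points --- extending Corollaries~\ref{cousin_1_cor} and~\ref{cousin_4} from smooth affine varieties to essentially smooth semi-local schemes by continuity, commuting cohomology with cofiltered limits, and reducing a general $X\in Sm/k$ to the irreducible affine case --- are routine.
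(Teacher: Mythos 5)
Your proof is correct and rests on exactly the same two pillars as the paper's: Corollary~\ref{cousin_1_cor}, giving the Cousin complex as a flasque resolution of $\mathcal A^n_{Zar}$, and Corollary~\ref{cousin_4}, identifying the column of $Cous(\Aff^1\times X)$ over each point $x\in X$ as a two-term resolution of the term $A^n_x(X)$ of $Cous(X)$. Where you differ is in the plumbing: the paper never passes to local rings or to the Leray spectral sequence for $\pr$. It simply observes that $p^*\colon Cous(X)\to Cous(\Aff^1\times X)$ is a quasi-isomorphism of global complexes of abelian groups (this is precisely your codimension-filtration argument, carried out once on $X$ rather than stalkwise over $\mathcal O_{X,x}$), and since both complexes compute the respective Zariski cohomology groups, the conclusion is immediate. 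Your Leray reduction and the subsequent invocation of Theorem~\ref{cousin_1} on $\Spec\mathcal O$ re-derive what Corollary~\ref{cousin_1_cor} already packages for general smooth $X$; there is no genuine obstacle where you flag one (the codimension filtration is respected by $\partial$ because $\pr$ is flat and commutes with specializations, and all maps are induced by $\pr^*$ from the start), but the two nested spectral sequences can be collapsed into a single comparison of flasque resolutions. What the paper's shorter route loses is an explicit articulation of the bookkeeping you carry out; what your longer route loses is economy, since the Leray step adds nothing once the global quasi-isomorphism of Cousin complexes is in hand.
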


\begin{proof}
Let $X$ be a $k$-smooth variety. Let $p: \Aff^1 \times X \to X$ be
the projection. The morphism $p$ induces the pull-back
morphisms of the Cousin complexes
$p^*: Cous(X) \to Cous(\Aff^1\times X)$.
By Corollary \ref{cousin_4} that morphism is a quasi-isomorphism of complexes of abelian groups.
By Corollary \ref{cousin_1_cor} complexes
$Cous(X)$ and $Cous(\Aff^1\times X)$
computes the Zarisky cohomology of the sheaf $\mathcal A^n$
on $X$ and on $\Aff^1\times X$ respectively.
And the the pull-back
morphisms of the Cousin complexes
$p^*$ induces the pull-back map
$$p^*: H^r_{Zar}(X, \mathcal A^n_{Zar}) \to H^r_{Zar}(\Aff^1\times X, \mathcal A^n_{Zar}).$$
Hence the latter map is an isomorphism.
Whence the theorem.
\end{proof}

\begin{thm}\label{StrHomInvÒøû}
Let $k$ be a field and let $A: SmOp/k \to Gr-Ab$ be a cohomology theory on
the category $SmOp/k$ in the sense of \cite[Sect.1]{PS}. Let
$\mathcal A^n_{Nis}$ be the Nisnevich sheaf associated with the presheaf
$W\mapsto A^n(W)$. Then for any $X\in Sm/k$ one has $\mathcal A^n_{Zar}(X)=\mathcal A^n_{Nis}(X)$,
$H^p_{Zar}(X,\mathcal A^n_{Zar})=H^p_{Nis}(X,\mathcal A^n_{Nis})$.

Particularly, the Nisnevich sheaf $\mathcal A^n_{Nis}$
is strictly homotopy invariant on $(Sm/k)_{Nis}$.
\end{thm}

\begin{proof}
It is derived in a standard manner from
Corollary \ref{cousin_1_cor} and Theorem \ref{StrHomInv}.
\end{proof}

\section{Appendix A: proof of Lemma \ref{SmallAmountOfPoints}}\label{Appendix}
Let $k$ be a {\bf finite} field. Let $\mathcal O$ be the semi-local ring of finitely many {\bf closed points} on a
$k$-smooth irreducible affine $k$-variety $X$.
Set $U=\spec \mathcal O$. Let ${\bf u}\subset U$ be the set of all closed points in $U$.
For a point $u\in {\bf u}$ let $k(u)$ be its residue field.

\begin{notation}\label{notn: F1F2}
Let $k$ be the finite field of characteristic $p$,
$k'/k$ be a finite field extensions.
Let $c=\sharp(k)$ (the cardinality of $k$).
For a positive integer $r$ let $k'(r)$
be a unique field extension of the degree $r$ of the field $k'$.
Let $\mathbb A^1_{k}(r)$ be the set of all degree $r$ points
on the affine line $\mathbb A^1_{k}$.
Let $Irr(r)$ be the number of the degree $r$ points on
$\mathbb A^1_{k}$.

\end{notation}

\begin{lem}\label{l:F1F2_preliminary}
Let $k$ be the finite field of characteristic $p$,
$c=\sharp(k)$,
$k^{\prime}/k$ be a finite field extension of degree $d$.
Let $q\in \mathbb N$
be {\bf a prime} which is {\bf co-prime} as to the characteristic $p$ of the field $k$,
so to the integer $d$.
Then \\
(1)$Irr(q)=(c^q-c)/q$;\\
(2) the $k$-algebra $k^{\prime}\otimes_{k} k(q)$ is the field $k^{\prime}(q)$;\\
(3) $Irr(dq) \geq Irr(q)$;\\
(4) $Irr(dq)\geq  (c^q-c)/q \gg 0$ for $q\gg 0$.
\end{lem}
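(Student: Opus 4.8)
The plan is to treat the four assertions as elementary counting facts about the finite field $k$ (of characteristic $p$, with $\sharp(k)=c$) together with the coprimality hypotheses, proving them more or less independently.

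For assertion (1) I would identify $Irr(q)=\sharp(\mathbb A^1_k(q))$ with the number of $\mathrm{Gal}(\bar k/k)$-orbits of elements $\alpha\in\bar k$ of degree exactly $q$ over $k$. Since $q$ is prime, the unique degree-$q$ extension $k(q)/k$ has no proper intermediate subfield, so the elements of exact degree $q$ over $k$ are precisely those of $k(q)$ not lying in $k$, of which there are $c^q-c$; the cyclic group $\mathrm{Gal}(k(q)/k)$ of order $q$ permutes them with all orbits of size $q$, whence $Irr(q)=(c^q-c)/q$. For assertion (2) I would use that $[k':k]=d$ and $[k(q):k]=q$ are coprime: the compositum $k'\cdot k(q)$ inside a fixed algebraic closure of $k$ then has degree $\operatorname{lcm}(d,q)=dq$ over $k$, so the multiplication map $k'\otimes_k k(q)\to k'\cdot k(q)$ is a surjective homomorphism of $k$-algebras between $k$-vector spaces both of dimension $dq$, hence an isomorphism; thus $k'\otimes_k k(q)$ is a field, of degree $q$ over $k'$, and therefore equals the unique such extension $k'(q)$.

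Assertion (3) is the heart of the matter. Here I would bound $Irr(dq)$ from below by counting generators of the field $k(dq)$: every element of $k(dq)$ of degree $<dq$ over $k$ lies in one of the maximal proper subfields $k(dq/p)$ with $p$ a prime divisor of $dq$, so inclusion--exclusion gives
\[
 dq\cdot Irr(dq)\ \ge\ c^{dq}-\sum_{p\mid dq}c^{dq/p}\ \ge\ c^{dq}-c^{d}-(\log_2 d)\,c^{dq/2},
\]
after separating the term $p=q$ (which contributes $c^{dq/q}=c^{d}$) and bounding the at most $\log_2 d$ terms with $p\mid d$ via $dq/p\le dq/2$. Since $Irr(q)=(c^q-c)/q<c^q/q$ by (1), it then suffices to establish $c^{dq}-c^{d}-(\log_2 d)\,c^{dq/2}\ge d\,c^{q}$. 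When $d=1$ this is vacuous (and $Irr(dq)=Irr(q)$); when $d\ge2$ one has $dq\ge6$ because $q$ is a prime coprime to $d$, and using $c\ge2$, $d\le dq/2$, $q\le dq/2$ the inequality reduces to $c^{dq/2}\ge 3d$, which is clear once $dq\ge 8$ and which leaves only the pair $(d,q)=(3,2)$ to be checked by hand from the Möbius formula $dq\cdot Irr(dq)=\sum_{e\mid dq}\mu(dq/e)\,c^{e}$. Assertion (4) will then be immediate: $Irr(dq)\ge Irr(q)=(c^q-c)/q$ by (3) and (1), and $(c^q-c)/q\to\infty$ as $q\to\infty$ since $c\ge2$.

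The hard part will be exactly assertion (3). It is tempting to derive it from (2) by base-changing $\mathbb A^1_k$ along $k\hookrightarrow k'$ and tracking how degree-$q$ and degree-$dq$ points split, but every such argument seems to yield only an \emph{upper} bound for $Irr(dq)$, so a genuine lower bound has to be produced intrinsically, by exhibiting enough generators of $k(dq)$ (either by the count above, or by the injection sending the orbit of a degree-$q$ element $\alpha$ to the orbit of $\alpha+\beta$ for a fixed generator $\beta$ of $k'/k$, which is injective when $p\nmid d$ by a trace argument and needs a little more care when $p\mid d$). In all of these the only real nuisance is the few small $(d,q)$ that the crude estimates miss.
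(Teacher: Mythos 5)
Your treatment of (1), (2), and the derivation of (4) coincides with the paper's, which dispatches (1) and (2) as clear and reads (4) off from (1) and (3). Where you genuinely diverge is (3). You bound $dq\cdot Irr(dq)$ from below by inclusion--exclusion over the maximal proper subfields $k(dq/p)$ of $k(dq)$, then close out with the explicit estimate $c^{dq}\geq c^{d}+(\log_2 d)\,c^{dq/2}+d\,c^{q}$ and a hand-check of the lone small pair $(d,q,c)=(3,2,2)$. The paper instead reduces (3) to $\varphi(c^{dq}-1)/d\geq\varphi(c^{q}-1)$ and proves that by a group-theoretic device: the norm $N:k(dq)^{\times}\to k(q)^{\times}$ between cyclic groups is surjective, so each primitive $(c^q-1)$-th root $\xi$ has a primitive $(c^{dq}-1)$-th preimage $\zeta$, and Galois-invariance of $N$ places the full $d$-element $Gal(k(dq)/k(q))$-orbit of $\zeta$ over $\xi$, giving $\varphi(c^{dq}-1)\geq d\,\varphi(c^{q}-1)$. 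Your route is more pedestrian but also more robust: the paper's reduction tacitly identifies $Irr(r)$ with $\varphi(c^{r}-1)/r$, whereas in general only $Irr(r)\geq\varphi(c^{r}-1)/r$ holds (multiplicative generators of $k(r)^{\times}$ are field generators, not conversely), and the inequality can be strict --- over $\mathbb F_{3}$ one has $Irr(2)=3$ but $\varphi(8)/2=2$. So the paper's norm argument delivers $Irr(dq)\geq\varphi(c^{q}-1)/q$, which need not reach $Irr(q)=(c^{q}-c)/q$; your inclusion--exclusion bound works directly with counts of field generators and closes that gap cleanly.
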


\begin{proof}
The assertions (1) and (2) are clear. The assertion (3) it is equivalent
to the unequality $\varphi(c^{dq}-1)/dq \geq \varphi(c^q-1)/q$. The latter is equivalent
to the one $\varphi(c^{dq}-1)/d \geq \varphi(c^q-1)$.

The norm map $N: k(dq)^{\times} \to k(q)^{\times}$ is surjective.
Both groups are the cyclic groups of orders $c^{dq}-1$ and $c^q-1$ respectively.
Thus for any $c^q-1$-th primitive root of unity $\xi \in k(q)^{\times}$
there is a $c^{dq}-1$-th primitive root of unity $\zeta \in k(dq)^{\times}$
such that $Norm(\zeta)=\xi$. On the other hand, if $N(\zeta)=\xi$
and $\sigma \in Gal(k(dq)/k(q))$, then $N(\zeta^{\sigma})=\xi$.
Thus $\varphi(c^{dq}-1)/d \geq \varphi(c^q-1)$. The assertion (3) is proved.

The assertions (1) and (3) yield the assertion (4).
\end{proof}

\begin{notation}
\label{d(Y_u)}
For any \'{e}tale $k$-scheme $W$
set $d(W)=\text{max} \{ deq_{k} k(v)| v\in W \}$.
\end{notation}

\begin{lem}\label{l:F1F2_surjectivity}
Let $Y_u$ be an \'{e}tale $k$-scheme. For any positive integer $d$ let
$Y_u(d)\subseteq Y_u$ be the subset consisting of points $v\in Y_u$ such that
$deg_{k}(k(v))=d$.
For any prime $q\gg 0$ the following holds: \\
(1) if $v\in Y_u$, then $k(v)\otimes_{k} k(q)$ is the field $k(v)(q)$ of the degree $q$ over $k(v)$; \\
(2) $k[Y_u]\otimes_{k} k(q)=(\prod_{v\in Y_u}k(v))\otimes_{k} k(q) =\prod_{v\in Y_u} k(v)(q)$;\\
(3) there is a surjective
$k$-algebra homomorphism
\begin{equation}\label{eq:first_surjectity}
\alpha: k[t]\to k[Y_u]\otimes_{k} k(q)
\end{equation}
\end{lem}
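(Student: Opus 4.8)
The plan is to derive all three assertions from Lemma~\ref{l:F1F2_preliminary} together with the Chinese Remainder Theorem. Since $Y_u$ is an \'etale $k$-scheme it is a finite disjoint union of spectra of finite field extensions of $k$; hence $k[Y_u]=\prod_{v\in Y_u}k(v)$ and the set of degrees $D:=\{\,\deg_k k(v)\mid v\in Y_u\,\}$ is finite. First I would restrict attention to primes $q$ with $q>\max D$ and $q\neq p$: such a $q$ is coprime to $p$ and, being a prime exceeding every element of $D$, divides no $d\in D$, hence is coprime to each $\deg_k k(v)$. For every $v\in Y_u$, Lemma~\ref{l:F1F2_preliminary}(2) (applied with $k'=k(v)$) then says that $k(v)\otimes_k k(q)$ is the field $k(v)(q)$, of degree $q$ over $k(v)$; this is item~(1), and it holds for all $q\gg 0$. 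Item~(2) is immediate from item~(1), since $(-)\otimes_k k(q)$ commutes with the finite product $k[Y_u]=\prod_{v\in Y_u}k(v)$, whence $k[Y_u]\otimes_k k(q)=\prod_{v\in Y_u}\bigl(k(v)\otimes_k k(q)\bigr)=\prod_{v\in Y_u}k(v)(q)$.

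For item~(3) I would build the homomorphism $\alpha$ out of a family of irreducible polynomials. Put $d_v:=\deg_k k(v)$; then $k(v)(q)$ is a finite field of degree $d_vq$ over $k$, so for any monic irreducible $f\in k[t]$ of degree $d_vq$ one has a $k$-algebra isomorphism $k[t]/(f)\cong k(v)(q)$ (any two finite fields of the same cardinality containing $k$ are $k$-isomorphic), hence a surjection $k[t]\twoheadrightarrow k(v)(q)$. If the polynomials $f_v$ ($v\in Y_u$) are chosen pairwise distinct, then, being irreducible, they are pairwise coprime, so the Chinese Remainder Theorem gives $k[t]/\bigl(\prod_{v\in Y_u}f_v\bigr)\cong\prod_{v\in Y_u}k[t]/(f_v)\cong\prod_{v\in Y_u}k(v)(q)=k[Y_u]\otimes_k k(q)$ as $k$-algebras; composing with the quotient map $k[t]\to k[t]/(\prod_v f_v)$ yields the desired surjective $\alpha$. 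Polynomials of distinct degrees are automatically distinct, so it only remains to arrange, for each $d\in D$, that the $n_d:=\sharp Y_u(d)$ points of degree $d$ can be matched with $n_d$ distinct monic irreducible polynomials of degree $dq$ over $k$, i.e. that $Irr(dq)\geq n_d$.

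That last inequality is exactly where Lemma~\ref{l:F1F2_preliminary}(4) enters, and it is the only step that is not routine bookkeeping: for each fixed $d$ one has $Irr(dq)\geq(c^q-c)/q\to\infty$ as $q\to\infty$, and, $D$ being finite, one may take $q\gg 0$ so that $Irr(dq)\geq\sharp Y_u\geq n_d$ for every $d\in D$ simultaneously; this further enlargement of $q$ is compatible with the earlier constraints $q>\max D$, $q\neq p$. For any such $q$ all of (1), (2), (3) hold, which completes the proof. I do not anticipate any genuine difficulty beyond this counting, the remainder being a direct application of parts (2) and (4) of the preceding lemma together with elementary finite-field algebra.
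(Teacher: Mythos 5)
Your proposal is correct and takes essentially the same route as the paper: items~(1) and~(2) follow from Lemma~\ref{l:F1F2_preliminary}(2), and item~(3) is obtained by invoking Lemma~\ref{l:F1F2_preliminary}(4) to ensure $Irr(dq)\geq\sharp Y_u(d)$ for every $d$ once $q\gg 0$, then surjecting $k[t]$ onto $\prod_v k(v)(q)$ via the Chinese Remainder Theorem applied to suitably chosen irreducible polynomials. The paper phrases the last step by first surjecting $k[t]$ onto a product of residue fields on $\mathbb A^1_k$ and then onto the target, but this is the same CRT argument in two stages rather than one.
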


\begin{proof}
The first assertion follows from the lemma \ref{l:F1F2_preliminary}(2).
The second assertion follows from the first one.
Prove now the third assertion.
The $k$-algebra homorphism
$$k[t]\to \prod^{d(Y_u)}_{d=1}(\prod_{x\in \mathbb A^1_{k}(dq))}k[t]/\mathfrak(m_x))=
\prod^{d(Y_u)}_{d=1}(\prod_{x\in \mathbb A^1_{k}(qd)} k(dq))$$
is surjective. The $k$-algebra $k[Y_u]$ is equal to
$\prod^{d(Y_u)}_{d=1}\prod_{v\in Y_u(d)}k(v)=\prod^{d(Y_u)}_{d=1}\prod_{v\in Y_u(d)}k(d)$.
Thus for any prime $q\gg 0$ one has
$$k[Y_u]\otimes_{k} k(q)=\prod^{d(Y_u)}_{d=1}\prod_{v\in Y_u(d)}k(dq).$$
Choose a prime $q\gg 0$ such that for any $d=1,2,...,d(Y_u)$ one has
$Irr(dq)\geq \sharp(Y_u(d))$.
This is possible by the lemma \ref{l:F1F2_preliminary}.
In this case for any $d=1,2,...,d(Y_u)$ there exists
a surjective $k$-algebra homomorphism
$\prod_{x\in \mathbb A^1_{k}(qd)} k(dq)\to \prod^{d(Y_u)}_{d=1}\prod_{v\in Y_u(d)}k(dq)$.
Thus for this specific choice of prime $q$ there exists a surjective $k$-algebra homomorphism
$k[t]\to k[Y_u]\otimes_{k} k(q)$.
The third assertion of the lemma is proved.
\end{proof}


Under the notation \ref{notn: F1F2} and \ref{d(Y_u)} state one more lemma,
which is used to prove the lemma \ref{SmallAmountOfPoints}.
\begin{lem}\label{l:S'_and_S''_preliminary}
Let $l/k$ be a finite field extention. Let $V$ be an \'{e}tale $l$-scheme
containing a $l$-rational point $v_0$. Let $V':=V-\{v_0\}$. Then for any prime $q\gg 0$ the following holds \\
(1) There is a surjective $k$-algebra map
$$\beta: k[t]\to [l[V']\otimes_{k} k(q)]\times [l[v_0] \times l(q-1)];$$
(2) the \'{e}tale extensions
$l[V']\hookrightarrow l[V']\otimes_{k} k(q)$
and
$in: l \hookrightarrow [l[v_0] \times l(q-1)]$
are of the form
$l[V'][t]/(f(t))$ and $l[t]/(t\cdot g)$ respectively. Here
$f(t)\in l[V'][t]$
is a monic degree $q$ polinomial
and
$g(t)\in l[t]$
a monic degree $(q-1)$ irreducible polinomial;\\
(3) the projection $p: [l[v_0] \times l(q-1)]\to l[v_0]=l$ is such that $p\circ in=id_E$.
\end{lem}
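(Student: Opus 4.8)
The plan is to handle the three assertions almost independently, since only (1) has real content. Set $e=[l:k]$. Because $k$ is finite and $l/k$ is finite, $l$ is a finite field and $V=\coprod_v\Spec k(v)$ with each $k(v)$ a finite field containing $l$; write $n_v=[k(v):k]$ and $D=\max_{v\in V'}[k(v):l]$. For (2): choose a monic irreducible $g\in l[t]$ of degree $q-1$ (one exists over any finite field), and observe that for $q\gg 0$ we have $\deg g\ge 2$, hence $g(0)\ne 0$ and $t,g$ are coprime in $l[t]$; the Chinese Remainder Theorem then identifies $l[t]/(t\,g)$ with $l[t]/(t)\times l[t]/(g)=l\times l(q-1)=l[v_0]\times l(q-1)$, the structure map corresponding to the diagonal $in$. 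Writing $k(q)=k[t]/(\phi)$ with $\phi$ monic irreducible of degree $q$ over $k$, base change gives $l[V']\otimes_k k(q)=l[V'][t]/(\phi)$, the required presentation with $f=\phi$ monic of degree $q$; this is \'etale over $l[V']$ because $\gcd(q,n_v)=1$ keeps $\phi$ irreducible, hence separable, over each $k(v)$ (Lemma \ref{l:F1F2_preliminary}(2)). Assertion (3) is then trivial, being the equality $p\circ in=\id_l$ for the first-factor projection $p$ and the diagonal $in$.

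The substance is (1), and here the obstacle is the classical count of irreducible polynomials over $k$. By Lemma \ref{l:F1F2_surjectivity}(2), for all large primes $q$ one has $l[V']\otimes_k k(q)=\prod_{v\in V'}k(v)(q)$, where $k(v)(q)$ is the finite field of degree $q\,n_v$ over $k$; together with $l[v_0]\times l(q-1)=k(e)\times k((q-1)e)$, the target $T$ of $\beta$ is a finite product of finite fields whose $k$-degrees are $e$, $(q-1)e$, and $q\,n_v$ for $v\in V'$. The numerical points I would check for $q$ a sufficiently large prime are: the degrees $q\,n_v$ are multiples of $qe$ and so strictly exceed both $(q-1)e$ and $e$; and $(q-1)e\ne q\,n_v$, for otherwise $\gcd(q,q-1)=1$ would force $q\mid e$, impossible once $q>e$. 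Hence the only coincidences among the factor-degrees of $T$ occur among the $q\,n_v$'s sharing a value of $n_v$, and for each of the finitely many values $m_0$ attained by $n_v$ (all $\le eD$) there are at most $\sharp V'$ factors of degree $q\,m_0$. By Lemma \ref{l:F1F2_preliminary}(4), $Irr(q\,m_0)\to\infty$ as the prime $q\to\infty$, for each fixed $m_0$, so $Irr(q\,m_0)\ge\sharp V'$ for $q\gg 0$, while $Irr(e)\ge 1$ and $Irr((q-1)e)\ge 1$ trivially; thus for $q\gg 0$ the number of factors of $T$ of any given $k$-degree $m$ is at most $Irr(m)$.

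To finish, I would fix such a prime $q$, large enough also that $q$ is coprime to $\charac k$ and to every $n_v$ (so that Lemmas \ref{l:F1F2_preliminary} and \ref{l:F1F2_surjectivity} are available). For each field factor $F_i$ of $T$, of degree $m_i$ over $k$, the bound just obtained lets me pick pairwise distinct monic irreducible polynomials $p_i\in k[t]$ with $\deg p_i=m_i$ --- factors of distinct degree automatically receive distinct polynomials, and within a single degree $m$ there are at most $Irr(m)$ factors and at least $Irr(m)$ monic irreducibles to choose from. Fixing $k$-algebra isomorphisms $k[t]/(p_i)\xrightarrow{\sim}F_i$ and invoking the Chinese Remainder Theorem then produces the desired surjective $k$-algebra map $\beta\colon k[t]\twoheadrightarrow k[t]/(\prod_i p_i)\xrightarrow{\sim}\prod_i k[t]/(p_i)\xrightarrow{\sim}\prod_i F_i=T$ (the case $V'=\emptyset$ being the same with $T=l\times l(q-1)$). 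The genuinely delicate step is the one in the middle paragraph: excluding a collision between the degree $(q-1)e$ and the degrees $q\,n_v$, and ensuring the growth of $Irr$ overtakes the fixed integer $\sharp V'$; everything else is the Chinese Remainder Theorem and finite-field arithmetic.
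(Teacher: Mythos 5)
Your proof is correct and follows essentially the same route as the paper: both reduce assertion (1) to a count of monic irreducibles of each relevant $k$-degree (Lemma~\ref{l:F1F2_preliminary}(4)) followed by the Chinese Remainder Theorem, and both treat (2) and (3) by writing down the presentations $l[V'][t]/(\phi)$ and $l[t]/(t\,g)$ explicitly. The only difference is organizational: the paper first cites Lemma~\ref{l:F1F2_surjectivity}(3) to produce the component $\beta'$ on $l[V']\otimes_k k(q)$ and then checks that the two extra factors $l$ and $l(q-1)$ have $k$-degrees disjoint from those of $\beta'$, whereas you run the irreducible-polynomial count uniformly over \emph{all} factors of the target at once (and thereby make the degree-collision checks $e$, $(q-1)e$, $q\,n_v$ fully explicit, which the paper states somewhat elliptically).
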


\begin{proof}
For any positive integer $d$ let
$V'(d)\subseteq V'$ be the subset consisting of points $v\in V'$ such that
$deg_{l}(l(v))=d$.
Let $d(V')=\text{max}\{ deq_{l} (l(v))| v\in V' \}$.
Let $d_0=deg_k(l)$.

Choose a prime $q\gg 0$ such that for any $d=1,2,...,d(V')$ one has
$Irr_{k}(dq)\geq \sharp(V'(d))$.
This is possible by the lemma \ref{l:F1F2_preliminary}(4).
Regarding the $l$-scheme $V'$ as the $k$-scheme and
applying the lemma
\ref{l:F1F2_surjectivity}(3)
we get a surjective $k$-algebra map
$\beta': k[t]\to l[V']\otimes_{k} k(q)=\prod_{v\in V'} k(v)(q)$.
Now consider a $k$-algebra map
$$\beta=(\beta',\beta_2,\beta_3): k[t]\to [l[V']\otimes_{k} k(q)]\times [l[v_0] \times l(q-1)],$$
where $\beta_2,\beta_3$ are surjective.
Since $q\gg 0$ and $q$ is co-prime to $(q-1)$, hence for any integer
$d=1,2,...,d(V')$ one has
$d\cdot q\neq d_0$, $dq\neq d_0\cdot(q-1)$ and $d_0\neq d_0\cdot (q-1)$.
The chine's remainder theorem
yields now the surjectivity of $\beta$.

The second and the third assertions are obvious.
\end{proof}

\begin{proof}[Proof of the lemma \ref{SmallAmountOfPoints}]
We prove this lemma for the case of local $U$ and left the general case to the reader.
So there is only one closed point $u\in U$. Let $k(u)$ be its residue field. It is a finite field
extension of the finite field $k$.
Let $Y$ be the set of closed points of the scheme $S'$ and $V$ be the scheme $\sqcup_{v\in Y} v$.
Set $v_0=\delta(u)$ and $V'=V-\{v_0\}$.
Then $V$, $V'$ and $v_0$ are \'{e}tale $k(u)$-schemes
and $v_0$ is a $k(u)$-rational point in $V$.

Set $l=k(u)=k(v_0)$.
By the lemma \ref{l:S'_and_S''_preliminary} there is the prime number $q\gg 0$ and
the surjective $k$-algebra map
$\beta: k[t]\to [l[V']\otimes_{k} k(q)]\times [l[v_0] \times l(q-1)]$.
{\it We will use $\beta$ at the end of this proof}.

By the lemma \ref{l:S'_and_S''_preliminary}(2) one have equalities of the \'{e}tale $l$-algebras
$$l[V']\otimes_{k} k(q)=l[V'][t]/(f) \ \text{and} \ l[v_0] \times l(q-1)=l[t]/(t\cdot g).$$
Here
$f(t)\in l[V'][t]$
is a monic degree $q$ polinomial
and
$g(t)\in l[t]$
a monic degree $(q-1)$ irreducible polinomial.

The closed embedding $\delta|_u: u\hookrightarrow T'$ induces a surjection
$(\delta|_u)^*: k[T']\to k(u)=l$ of the $k$-algebras.
Choose a monic degree $(q-1)$ polinomial $\tilde g(t)\in k[T']$,
which is a lift of $g(t)\in l[t]$. Consider the
$l[V']\times k[T']$-algebra
$(l[V']\times k[T'])[t]/(f,t\cdot \tilde g)(l[V']\times k[T'])[t]$.

The closed embedding
$i_V\sqcup i_T: V'\sqcup T' \hookrightarrow S'$
induces a surjection
$(i_V\sqcup i_T)^*: k[S']\to l[V']\times k[T']$.
Choose a monic degree $q$ polinomial $F(t)\in k[S']$,
which is a lift of
$(f,t\cdot \tilde g)\in l[V'][t]\times k[T'][t]=(l[V']\times k[T'])[t]$.
Consider the $k[S']$-algebra $k[S'][t]/(F(t))$.
Set $$S''=Spec( \ k[S'][t]/(F(t)) \ ).$$
The inclusion $k[S']\to k[S'']=k[S'][t]/(F(t))$ is a $k[U]$-algebra homomorphism.
Let $\rho: S''\to S'$ be the corresponding morphism of the $U$-schemes.
Nakayama lemma yields that the closed subscheme $\rho^{-1}(T')\subseteq S''$ coincides with
$$Spec(k[T'][t]/(t\cdot \tilde g)=Spec(k[T'][t]/(t)\sqcup Spec(k[T'][t]/(\tilde g))=T'\sqcup Spec(k[T'][t]/(\tilde g))$$
and the morphism $\rho|_{T'}: T' \to S'$ coincides with the closed embedding $i_T: T'\hookrightarrow S'$.
Denoting $\delta': T' \hookrightarrow S''$ the inclusion we see that
$\delta'$ is a section of $\rho$ over $T' \subset S''$.

Clearly, the the morphism $\rho$ is finite flat.
The $l$-scheme $\rho^{-1}(V)$ coincides with the one
$Spec(l[V']\otimes_{k} k(q))\sqcup [Spec(l[v_0]\times l(q-1))]$.
It is \'{e}tale of degree $q$ over the $l$-scheme
$$V=Spec(l[V'])\sqcup Spec(l).$$
Hence the morphism $\rho$ {\it is finite \'{e}tale}.
The set of points
of the scheme $\rho^{-1}(V)$
coincides with
the set $\{z \in S^{\prime\prime}_u| [k(z):k(u)] < \infty \}$.
The surjectivity of the homomorphism $\beta$ shows now that
the latter set satisfies to the condition (2) of the lemma
\ref{SmallAmountOfPoints}.

Clearly, $v_0$ is the only $l$-rational point of the scheme $\rho^{-1}(V)$.
Thus the morphisms $\rho$ and $\delta'$ satisfy to the condition (1) of the lemma.
Whence the lemma \ref{SmallAmountOfPoints}.
\end{proof}

\section{Appendix B: proof of the proposition \ref{ArtinsNeighbor}}\label{Appendix_B}
Let $\mathbb F_q$ be a finite field of $q = p^a$ elements.
Let $S = \mathbb F_q[x_0,...,x_n]$ be the homogeneous coordinate ring of $\mathbb P^n$,
let $S_d \subset S$ be the $\mathbb F_q$-subspace of homogeneous polynomials of degree $d$,
and let $S_{homog} = \cup^{\infty}_{d=0} S_d$.
For each $f \in S_d$, let $H(f)$ be the subscheme $Proj(S/(f)) \subset \mathbb P^n$.
Typically (but not always), $H(f)$ is a hypersurface of dimension $n-1$ defined by the equation $f=0$.
The following Bertini type proposition is an extension of Artin's
result~\cite[Exp. XI, Thm. 2.1]{LNM305}.
It is a straightforward corollary of two theorems \cite[Thm. 1.2]{Poo1}, \cite[Thm. 1.1]{Poo2}.
\begin{prop}\label{Bertini_for_geom_connected}
Let $X$ be a projective smooth geometrically irreducible subscheme of $\mathbb P^n$ over $\mathbb F_q$. Let $Z$ be a finite subset of $X$,
Let $m\geq 2$ be the dimension of $X$.
Then there in an integer $N_0>0$ such that for any integer $d\geq N_0$ there is an $f\in S_d$ such that the scheme
$H(f)\cap X$ is smooth geometrically irreducible of dimension $m-1$.
\end{prop}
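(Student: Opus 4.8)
The plan is to deduce the proposition directly from Poonen's Bertini theorems over finite fields, exactly as announced. Write $S=\mathbb{F}_q[x_0,\dots,x_n]$ and, for a subset $\mathcal{P}\subseteq S_{homog}$ of homogeneous polynomials, let $\mu(\mathcal{P})=\lim_{d\to\infty}\#(\mathcal{P}\cap S_d)/\#S_d$ denote its density, when this limit exists. First I would apply the smoothness Bertini theorem \cite[Thm.~1.2]{Poo1} to the smooth (quasi-)projective scheme $X$: it produces a subset $\mathcal{P}_{sm}\subseteq S_{homog}$ of \emph{positive} density consisting of those $f$ for which $H(f)\cap X$ is smooth of pure dimension $m-1$; moreover, in the refined form of loc.\ cit.\ one may simultaneously prescribe the behaviour of $f$ along the finite set $Z$ (for instance $Z\cap H(f)=\emptyset$, or $Z\subseteq H(f)$, whichever the application of Proposition~\ref{ArtinsNeighbor} requires), and positivity of the density is retained precisely because $Z$ is finite and the prescribed condition is realized by at least one homogeneous polynomial. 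Next I would apply the Bertini irreducibility theorem \cite[Thm.~1.1]{Poo2}: since $X$ is geometrically irreducible of dimension $m\geq 2$, the set $\mathcal{P}_{irr}\subseteq S_{homog}$ of $f$ with $H(f)\cap X$ geometrically irreducible has density equal to $1$.

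Then I would combine the two. Because $\mu(\mathcal{P}_{irr})=1$, one has the elementary estimate $\#\bigl((\mathcal{P}_{sm}\cap\mathcal{P}_{irr})\cap S_d\bigr)\geq \#(\mathcal{P}_{sm}\cap S_d)-\#\bigl((S_{homog}\setminus\mathcal{P}_{irr})\cap S_d\bigr)$; dividing by $\#S_d$ and letting $d\to\infty$ shows $\mu(\mathcal{P}_{sm}\cap\mathcal{P}_{irr})=\mu(\mathcal{P}_{sm})>0$. In particular there is an integer $N_0>0$ such that for every $d\geq N_0$ the set $(\mathcal{P}_{sm}\cap\mathcal{P}_{irr})\cap S_d$ is non-empty. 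Any $f$ in this set has $H(f)\cap X$ smooth and geometrically irreducible; it is moreover of pure dimension $m-1$, which is automatic since $X$ is projective of positive dimension (so $H(f)\cap X\neq\emptyset$) while $H(f)$ is a proper closed subscheme of the irreducible $X$. This is the required $f\in S_d$, and the proposition follows.

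The \textbf{main obstacle} here is bookkeeping rather than geometry: one must be careful that the density framework of \cite{Poo1} and \cite{Poo2} — where densities are limits over \emph{all} homogeneous polynomials as the degree tends to infinity — genuinely yields the stated ``for all $d\geq N_0$'' conclusion, which it does exactly because a positive density forces $\#(\mathcal{P}\cap S_d)>0$ for all sufficiently large $d$; and one must check that the extra condition imposed along $Z$ (if any is needed when this proposition is invoked in Proposition~\ref{ArtinsNeighbor}) is of the type covered by the refined Poonen theorem \cite[Thm.~1.2]{Poo1}, so that it does not destroy positivity of the density — and this is the one place where finiteness of $Z$ is used. No genericity argument beyond Poonen's and Charles--Poonen's is needed, and in particular no appeal to an infinite ground field is made anywhere.
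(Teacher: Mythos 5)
Your proposal is correct and takes essentially the same approach as the paper: combine Poonen's positive-density smoothness Bertini \cite[Thm.~1.2]{Poo1} with the Charles--Poonen density-one irreducibility Bertini \cite[Thm.~1.1]{Poo2}, and conclude from positivity of the intersection's density that every degree $d\geq N_0$ contains a suitable $f$. The one point you leave implicit and the paper makes explicit is the jet prescription needed at $Z$ when one wants $Z\subset H(f)$ yet $H(f)\cap X$ smooth there: for a rational $x_0\in Z$ with tangent space $\tau$, the paper fixes a linear form $l$ with $l|_\tau\neq 0$, imposes $f(x_0)=0$ and degree-one Taylor term $f_1=l$, applies Poonen's theorem to $X-x_0$, and then deduces smoothness of $H(f)\cap X$ at $x_0$ from $l|_\tau\neq 0$; your reference to ``the refined form'' of Poonen is the right tool, but this is the precise local condition that must be prescribed.
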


\begin{proof}
We give a proof in the simplest case, when the set $Z$ consists of one point and this point is rational.
The general case we left to the reader. So, we may and will assume that
$Z=x_0=[1:0:...:0] \in X$.
Let $\tau\subset \mathbb P^n$ be the tangent space to $X$ at the point
$x_0$.
In this case let $l=l(t_0,t_1,\dots, t_n)$ be a linear form in $\mathbb F_q[t_0,t_1,\cdots,t_n]$
such that
$l|_{\tau}\neq 0$.
Set
$$\mathcal P_1 :=\{f \in S_{homog} : H(f)\cap (X-x_0) \ \text{ \ is \ smooth \ of \ dimension} \ \ m-1, \ \text{and} f(x)=0, f_1=l \}.$$
By \cite[Thm. 1.2]{Poo1} one has $\mu(\mathcal P)=\frac{1}{q}\zeta_{(X-x_0)}(m+1)^{-1}>0$.
On the other hand the density of
$\mathcal P_2:=\{f\in S_{homog} : H(f)\cap X \text{is \ geometrically \ irreducible} \}$
is $1$. These yield that
there in an integer $N_0>0$ such that for any integer $d\geq N_0$ there is an $f\in S_d$ with $f(x)=0$, $f_1=l$ such that the scheme
$H(f)\cap (X-x_0)$ is smooth geometrically irreducible of dimension $m-1$. Since $f(x)=0$ and $l|_{\tau}\neq 0$, hence
the scheme $H(f)\cap X$ contains the point $x_0$ and it is smooth at this point too. Whence the proposition.
\end{proof}

We next prove the proposition \ref{ArtinsNeighbor}.
Recall thar it extends a result of Artin from
\cite{LNM305} concerning existence of nice neighborhoods.
The proof follows the proof of the original Artin's result. {\it However
we do not have in hand at the moment
any kind of theorem about existence of $d$ appropriative family of sections of the bundle $\mathcal O(d)$ for some $d\gg 0)$.
}
This is why there is a slight technical diffence beteen our proof and the original Artin's proof of his result.
The details are given below in the proof. Since the differences are purely technical we will give the proof for
the case of a smooth geometrically irreducible surface and left to the reader to recover the general case.

\begin{proof}[Proof of the proposition \ref{ArtinsNeighbor} ]
We may and will assume that $X \subset \Aff^t_k$ is closed $k$-smooth geometrically irreducible surface.
Set $x:= \coprod^n_{j=1}x_i$.
Let $X_0$ be the closure of $X$ in $\Pro^t_k$.
Let $\bar X$ is the normalization of $X_0$ and set $Y=\bar X - X$ with the induced reduced structure.
Let $S \subset \bar X$ be the closed subset of $\bar X$ consisting of all singular points. Then  one has
\begin{itemize}
\item[(i)]
$S \subset Y$,
\item[(ii)]
$\dim \bar X = \dim X = 2$,
\item[(iii)]
$\dim Y = 1$,
\item[(iv)]
$\dim S \leq 0$ (so, $S$ consists of finitely many closed points in $Y$).
\end{itemize}
Embed $\bar X$ in a projective space $\Pro^r$.
By the proposition \ref{Bertini_for_geom_connected}
there is an integer $N_0>0$ such that for any integer $d_1\geq N_0$
there is there is an $f_1\in S_{d_1}$ such that \\
(0) for any ${j=1,...,n}$ one has $f_1(x_i)\neq 0$, \\
(1) the scheme $X_1:=H(f_1)\cap \bar X$ is geometrically irreducible of dimension $1$ and it is smooth at all the points, where $X$ is smooth,\\
(2) the scheme $Y_1:=H(f_1)\cap Y$ is of dimension $0$ and is smooth at all the points, where $Y$ is smooth;\\
(3) the scheme $S_1:=H(f_1)\cap S$ is empty (particularly, the scheme $Y_1$ is smooth).\\
The properties (1) and (3) show that $X_1$ is smooth.

By the  proposition \ref{Bertini_for_geom_connected}
there is an integer $N_1>0$ such that for any integer $d_1d_2\geq N_1$
there is there is an $f_2\in S_{d_1d_2}$ such that \\
($0_1$) for any ${j=1,...,n}$ one has $f_2(x_i)=0$, \\
($1_1$) the scheme $X_2:=H(f_2)\cap \bar X$ is geometrically irreducible of dimension $1$ and it is smooth at all the points, where $X$ is smooth,\\
($2_1$) the scheme $Y_2:=H(f_2)\cap Y$ is of dimension $0$ and is smooth at all the points, where $Y$ is smooth;\\
($3_1$) the scheme $S_2:=H(f_2)\cap S$ is empty (particularly, the scheme $Y_1$ is smooth),\\
($4_1$) the scheme $X_2\cap X_1$ is smooth of dimension $0$,\\
($5_1$) the scheme $X_2\cap X_1\cap Y$ is empty.\\
The properties ($1_1$) and ($3_1$) show that $X_2$ is smooth. The property ($5_1$)
shows that $X_2\cap X_1 \subset X$. Let $t_1,t_2$ be the homogeneous coordinates on the projective line
$\mathbb P^1_{\mathbb F_q}$.
Consider the section
$t_2\cdot f^{d_2}_1-t_1f_2$ on $\bar X\times \mathbb P^1_{\mathbb F_q}$
of the line bundle $\mathcal O(d_1d_2)|_{\bar X}\boxtimes \mathcal O_{\mathbb P^1(1)}$.
Let $\bar X'\subset \bar X\times \mathbb P^1_{\mathbb F_q}$
be a Cartier divisor defined by the equation $t_2\cdot f^{d_2}_1-t_1f_2=0$.
Let
$$\sigma=pr_{\bar X'}|_{\bar X'}: \bar X' \to \bar X.$$
It is easy to check that the scheme $\bar X'$ is a reduced and even normal.
One has
$\sigma^{-1}(X_1\cap X_2)=\mathbb P^1_{X_1\cap X_2}$.

Moreover, for any smooth open
$U$ in $\bar X$ the open subscheme $\sigma^{-1}(U)$ in $\bar X'$ is a smooth and open.
Particularly, $\sigma^{-1}(\bar X-S)$ and $\sigma^{-1}(X)=\sigma^{-1}(\bar X-Y)$ in $\bar X'$ are open and smooth.

\begin{lem}
Let $\bar X_{f_1}:=\bar X-X_1$ and  $\bar X'_{f_1}:=\sigma^{-1}(\bar X-X_1)$ and $\sigma_1:=\sigma|_{\bar X'_{f_1}}$.
Then $\sigma_1: \bar X'_{f_1}\to \bar X_{f_1}$ is an affine scheme isomorphism.

Futhermore, if
$Y_{f_1}=Y-Y_1$, then $Y_{f_1}$ is closed in $\bar X_{f_1}$ and we will identify
$Y_{f_1}$ with the closed subscheme $\sigma^{-1}_1(Y_{f_1})$ in $\bar X'_{f_1}$.
\end{lem}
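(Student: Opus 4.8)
The plan is to observe that away from $H(f_1)$ the bihomogeneous equation $t_2 f_1^{d_2} - t_1 f_2 = 0$ cutting out $\bar X'$ presents $\bar X'$ as the graph of a morphism to $\mathbb{P}^1$, so that $\sigma_1$ is the restriction of the first projection to a graph, hence an isomorphism. First I would record that $\bar X_{f_1} = \bar X - X_1 = \bar X \setminus (H(f_1)\cap \bar X)$ is affine: it is the complement in the projective scheme $\bar X$ (the normalization of $X_0$, hence finite over $\Pro^t_k$, hence projective, and embedded in $\Pro^r$) of the zero scheme of the global section $f_1|_{\bar X}$ of the ample invertible sheaf $\mathcal{O}_{\Pro^r}(d_1)|_{\bar X}$, and the complement of the zero locus of a section of an ample invertible sheaf on such a scheme is affine.

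Next I would set up the trivialization. Over $\bar X_{f_1}$ the section $f_1^{d_2}$ of $\mathcal{O}(d_1 d_2)|_{\bar X}$ is nowhere vanishing, so it trivializes $\mathcal{O}(d_1 d_2)$ over $\bar X_{f_1}$; put $\psi := f_2/f_1^{d_2} \in \Gamma(\bar X_{f_1}, \mathcal{O}_{\bar X_{f_1}})$. The Cartier divisor defining $\bar X'$ is the zero scheme of the section $t_2 f_1^{d_2} - t_1 f_2$ of $\mathcal{O}(d_1 d_2)\boxtimes \mathcal{O}_{\mathbb{P}^1}(1)$ on $\bar X\times \mathbb{P}^1$; restricting to $\bar X_{f_1}\times \mathbb{P}^1$ and dividing through by the trivializing section $f_1^{d_2}$ identifies this zero scheme, scheme-theoretically, with the zero scheme of $t_2 - \psi t_1$ in $\bar X_{f_1}\times \mathbb{P}^1$. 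Since the coefficient of $t_2$ is the unit $1$, on each fibre $\{x\}\times \mathbb{P}^1$ the linear form $t_2 - \psi(x) t_1$ is nonzero, and as $\psi$ is everywhere regular there is no locus "at infinity"; hence $\{t_2 - \psi t_1 = 0\}$ is exactly the graph $\Gamma$ of the morphism $[1:\psi]\colon \bar X_{f_1}\to \mathbb{P}^1$. Therefore $\bar X'_{f_1} = \sigma^{-1}(\bar X_{f_1}) = \Gamma$, and $\sigma_1 = \sigma|_{\bar X'_{f_1}}$ is the first projection restricted to a graph, which is an isomorphism onto $\bar X_{f_1}$; in particular $\sigma_1$ is an affine morphism and, by the first paragraph, both $\bar X'_{f_1}$ and $\bar X_{f_1}$ are affine.

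For the remaining assertion I would simply compute $Y_{f_1} = Y - Y_1 = Y - (H(f_1)\cap Y) = Y\cap (\bar X - H(f_1)) = Y\cap \bar X_{f_1}$, which is closed in $\bar X_{f_1}$ because $Y$ is closed in $\bar X$; one then transports $Y_{f_1}$ to $\sigma_1^{-1}(Y_{f_1})\subset \bar X'_{f_1}$ along the isomorphism $\sigma_1$ and identifies the two, as the statement proposes. The only delicate point in the whole argument is the bookkeeping in the second paragraph: $f_1^{d_2}$ and $f_2$ are sections of $\mathcal{O}(d_1 d_2)$, not functions, so one must check carefully that their ratio over $\bar X_{f_1}$ is a bona fide global regular function and that dividing the bihomogeneous equation by the trivializing section turns $\bar X'\cap(\bar X_{f_1}\times \mathbb{P}^1)$ into a graph scheme-theoretically rather than merely set-theoretically; this is what makes $\sigma_1$ an isomorphism and not just a bijection, and it is the crux of the lemma. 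Everything else is routine.
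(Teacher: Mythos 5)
Your proof is correct and follows essentially the same route as the paper's: both observe that after inverting $f_1$ the defining equation $t_2 f_1^{d_2} - t_1 f_2 = 0$ becomes the graph equation for the regular function $f_2/f_1^{d_2}$, so $\sigma_1$ is the projection from a graph and hence an isomorphism (the paper phrases this directly as $\mathbb F_q[\bar X'_{f_1}] \cong \mathbb F_q[\bar X_{f_1}][t]/(t - f_2/f_1^{d_2})$). Your version spells out the two points the paper leaves tacit — that $\bar X_{f_1}$ is affine as the complement of an ample divisor, and that over $\bar X_{f_1}$ the fibre of $\bar X'$ meets no point of $\mathbb P^1$ with $t_1 = 0$, so the scheme really is the graph and not just contained in the affine chart — but this is elaboration, not a different argument.
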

\begin{proof}[Proof of the lemma]
In fact, both schemes are affine and the morphism $\sigma_1$ induces a $\mathbb F_q[\bar X_{f_1}]$-algebra
map
$$\sigma^*_1: \mathbb F_q[\bar X_{f_1}]\to \mathbb F_q[\bar X'_{f_1}]= \mathbb F_q[\bar X_{f_1}]/(t-f_2/f^{d_2}_1),$$
which is an isomorphism.
\end{proof}

We left to the reader the following lemma
\begin{lem}
The morphism $p: \bar X' \to \mathbb P^1_{\mathbb F_q}$ is flat. Let $X'_1\subset \bar X'$ be the closure of
$\sigma^{-1}(X_1-(X_1\cap X_2))$ in $\bar X'$ equipped with the reduced scheme structure. Then one has \\
(i) $\sigma^{-1}(X_1\cap X_2)=\mathbb P^1_{X_1\cap X_2}$;\\
(ii) $\sigma^{-1}(X_1)=\mathbb P^1_{X_1\cap X_2}\cup X'_1$;\\
(iii) $p^{-1}([0:1])=\mathbb P^1_{X_1\cap X_2}\cup X'_1=\sigma^{-1}(X_1)$;\\
(iv) $X'_1\cap \mathbb P^1_{X_1\cap X_2}=\{ \infty \}\times (X_1\cap X_2)$, where $\infty:=[0:1]\in \mathbb P^1_{\mathbb F_q}$.
\end{lem}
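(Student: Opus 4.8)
The plan is first to identify $\bar X'$ geometrically. Since $f_1^{d_2}$ and $f_2$ are two sections of one and the same line bundle $\mathcal O(d_1d_2)|_{\bar X}$, the equation $t_2f_1^{d_2}-t_1f_2=0$ presents $\bar X'\subset\bar X\times\mathbb P^1$ as the closure of the graph of the rational map $\varphi=[f_1^{d_2}:f_2]\colon\bar X\dashrightarrow\mathbb P^1$, equivalently $\sigma\colon\bar X'\to\bar X$ is the blow-up of $\bar X$ along $Z:=V(f_1^{d_2},f_2)$. The crucial point is that $Z$ is supported on the finite set $X_1\cap X_2$, at each point $x$ of which $\bar X$ is regular (property (3): $X_1\cap S=\emptyset$) and $X_1,X_2$ meet transversally (property $(4_1)$); hence, in suitable local parameters, $f_1=u\cdot(\text{unit})$, $f_2=v\cdot(\text{unit})$ with $(u,v)$ a regular system of parameters, so $Z$ is \'etale-locally at $x$ the complete intersection $V(u^{d_2},v)$. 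Consequently $\sigma$ is an isomorphism over $\bar X\setminus(X_1\cap X_2)$, and everything to be proved reduces to a computation in the local model: the chart $\{t_2\neq0\}$ of $\bar X'$ near $(x,\infty)$ is $\mathrm{Spec}\,\mathcal O_{\bar X,x}[s]/(u^{d_2}-sv)$ (an $A_{d_2-1}$ hypersurface singularity), with $\sigma$ forgetting $s$ and $p$ equal to $s\mapsto s$.

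For the flatness of $p$: $\bar X'$ is an effective Cartier divisor in $\bar X\times\mathbb P^1$, cut out by a section of $\mathcal O(d_1d_2)|_{\bar X}\boxtimes\mathcal O(1)$ whose restriction to the fibre $\bar X\times\{[a:b]\}$ is $bf_1^{d_2}-af_2$. This is a nonzero element of the domain $k[\bar X]$ for every $[a:b]$ — if it vanished identically, $f_1^{d_2}$ and $f_2$ would be proportional, impossible since $f_1(x_i)\neq0$ while $f_2(x_i)=0$ (properties (0), $(0_1)$). Hence $bf_1^{d_2}-af_2$ is a nonzerodivisor on $\bar X$ for all $[a:b]$; as $\bar X\times\mathbb P^1$ is flat over $\mathbb P^1$, the fibrewise-nonzerodivisor criterion shows $\mathcal O_{\bar X'}$ is $\mathbb P^1$-flat. (Alternatively: $\bar X'$, a Cartier divisor in the Cohen--Macaulay scheme $\bar X\times\mathbb P^1$, is Cohen--Macaulay, all fibres of $p$ are pure of dimension one, and $\mathbb P^1$ is regular, so miracle flatness applies.)

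Now the four identities. (i) In the local model the scheme-theoretic fibre of $\sigma$ over $x\in X_1\cap X_2$ is $\mathrm{Proj}$ of the Rees algebra of $(u^{d_2},v)$ tensored down to $k(x)$, i.e. $\mathrm{Proj}\,k(x)[T_1,T_2]=\mathbb P^1_{k(x)}$, which is reduced; since $X_1\cap X_2$ is reduced, $\sigma^{-1}(X_1\cap X_2)=\mathbb P^1_{X_1\cap X_2}$. (ii) Over the dense open $X_1\setminus(X_1\cap X_2)$ the map $\sigma$ is an isomorphism and $\varphi$ is constantly $[0:1]=\infty$ (there $f_1=0$, $f_2\neq0$), so $\sigma^{-1}(X_1\setminus(X_1\cap X_2))=(X_1\setminus(X_1\cap X_2))\times\{\infty\}$, whose closure is $X'_1=X_1\times\{\infty\}$ as $X_1$ is irreducible; together with the exceptional fibres of (i) this gives $\sigma^{-1}(X_1)=X'_1\cup\mathbb P^1_{X_1\cap X_2}$ as sets, and the scheme-theoretic preimage $\bar X'\times_{\bar X}X_1$ is reduced — locally at $(x,\infty)$ it is $R[s]/(s\pi)$, $R$ the discrete valuation ring $\mathcal O_{X_1,x}$ and $\pi$ a uniformizer — so this union is the reduced one. (iv) $X'_1=X_1\times\{\infty\}$ and $\mathbb P^1_{X_1\cap X_2}=(X_1\cap X_2)\times\mathbb P^1$ are reduced closed subschemes of $\bar X\times\mathbb P^1$ whose ideals at a point $(x,\infty)$, $x\in X_1\cap X_2$, together generate the maximal ideal (transversality again), so their intersection is the reduced scheme $\{\infty\}\times(X_1\cap X_2)$, and away from those points they are disjoint.

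The step (iii) is the one I expect to be the main obstacle, since it is where the scheme structures must be tracked carefully. Restricting the defining equation to $\{t_1=0\}$ — the fibre over $\infty=[0:1]$ — shows $p^{-1}([0:1])$ is, via $\sigma$, the subscheme $V_{\bar X}(f_1^{d_2})$ of $\bar X$, supported on $X_1\times\{\infty\}=X'_1$; in particular, when $d_2>1$ it is non-reduced along $X'_1$, and one must be precise about how this matches $\sigma^{-1}(X_1)=X'_1\cup\mathbb P^1_{X_1\cap X_2}$ from (ii) (the claimed equality being read together with the explicit description of $\sigma^{-1}(X_1)$ just obtained). I would carry this out entirely inside the $A_{d_2-1}$ local model, where $p^{-1}(\{s=0\})=\mathrm{Spec}\,\mathcal O_{\bar X,x}/(u^{d_2})$ while $\sigma^{-1}(X_1)$ is $R[s]/(s\pi)$, comparing the two and then globalizing; in the case $d_2=1$ (which one may sometimes arrange by taking $\deg f_1=\deg f_2$) $\bar X'$ is smooth and $p^{-1}([0:1])$ is already reduced, removing the nilpotent bookkeeping. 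Once the blow-up description and the local model are fixed, the flatness and parts (i), (ii), (iv) are essentially immediate; it is only the comparison of the fibre $p^{-1}([0:1])$ with $\sigma^{-1}(X_1)$ that needs real attention.
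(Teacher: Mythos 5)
The paper leaves this lemma to the reader, so there is no proof to compare against; I will assess your argument on its own terms.

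Your identification of $\bar X'$ as the blow-up of $(f_1^{d_2},f_2)$ on the smooth locus of $\bar X$, the $A_{d_2-1}$ local model $\operatorname{Spec}\mathcal O_{\bar X,x}[s]/(u^{d_2}-sv)$ at $(x,\infty)$ for $x\in X_1\cap X_2$, both of your flatness arguments (fibrewise nonzerodivisor and miracle flatness), and your verifications of (i), (ii) and (iv) are all correct. (Small wording slip: $\bar X$ is projective, so one should speak of a nonzero section of $\mathcal O(d_1d_2)|_{\bar X}$ rather than of an element of a nonexistent coordinate ring; since $\bar X$ is geometrically integral this is a nonzerodivisor on every fibre, which is what the criterion needs.)

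Where you stop short is (iii), and the gap is that you treat the discrepancy you found as bookkeeping to be reconciled, when in fact it is a counterexample to the statement. You correctly computed that in the $t_2=1$ chart $p^{-1}(\{t_1=0\})$ has ideal $(s)$, hence $A/(s)=\mathcal O_{\bar X,x}/(u^{d_2})$, supported on $X'_1$ only, whereas $\sigma^{-1}(X_1)$ has ideal $(u)$, hence $A/(u)=\mathcal O_{X_1,x}[s]/(sv)$, which has the extra minimal prime $(u,v)$ giving $\mathbb P^1_x$. So already \emph{set-theoretically}
$$p^{-1}([0:1])=X_1\times\{\infty\}=X'_1\subsetneq X'_1\cup\mathbb P^1_{X_1\cap X_2}=\sigma^{-1}(X_1)$$
whenever $X_1\cap X_2\neq\emptyset$: a point $(x,[a:b])$ with $x\in X_1\cap X_2$ and $[a:b]\neq[0:1]$ lies in $\sigma^{-1}(X_1)$ but has $p$-image $[a:b]\neq\infty$. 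You should have drawn the conclusion that (iii) is simply false as written, rather than suggesting the comparison ``needs real attention''. Your proposed remedy of arranging $d_2=1$ does not help: taking $d_2=1$ makes $\bar X'$ smooth and $p^{-1}(\infty)$ reduced, but $p^{-1}(\infty)$ is still the single component $X'_1$ and still omits $\mathbb P^1_{X_1\cap X_2}$, so the set-theoretic mismatch persists. The correct assertion, and the one that actually supports the surrounding argument in the paper (removing $\bar X'_1$ to land in $\mathbb A^1_{\mathbb F_q}$, then excising $\mathbb A^1_{X_1\cap X_2}$ to recover $\bar X_{f_1}$), is that $p^{-1}([0:1])$ is supported on $X'_1$, i.e.\ $p^{-1}([0:1])_{\mathrm{red}}=X'_1$, scheme-theoretically $p^{-1}([0:1])=d_2\,X'_1$ as a Cartier divisor; the equality with $\sigma^{-1}(X_1)$ in (iii) is a misprint.
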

These two lemmas yield:
the morphism $\bar P=p|_{\bar X'-\bar X'_1}: \bar X'-\bar X'_1 \to \mathbb A^1_{\mathbb F_q}$ is projective and flat,
$\mathbb A^1_{X_1\cap X_2}$ is a closed subvariety in
$ (\bar X'-\bar X'_1)$,
$\sigma_1: (\bar X'-\bar X'_1)-\mathbb A^1_{X_1\cap X_2}=\bar X'_{f_1}\to \bar X_{f_1}$ is an affine scheme isomorphism.
Set $J=(\bar X_{f_1} \xrightarrow{\sigma^{-1}_1} (\bar X'-\bar X'_1)-\mathbb A^1_{X_1\cap X_2} \xrightarrow{In} \bar X'-\bar X'_1)$.
For $x \in \bar X_{f_1}$ set $P(x)=f_2(x)/f^{d_2}_1(x)$.
Summarizing we get a commutative diagram of the form
\begin{equation}
\label{SquareDiagram_2}
    \xymatrix{
     \bar X_{f_1} \ar[drr]_{P}\ar[rr]^{J}&&
\bar X'-\bar X'_1 \ar[d]_{\overline P}&& \mathbb A^1_{X_1\cap X_2} \ar[ll]_{I}\ar[lld]_{Q} &\\
     && \mathbb A^1_{\mathbb F_q}  &\\    }
\end{equation}
of morphisms satisfying the following conditions:
\begin{itemize}
\item[{\rm(i)}]
$J$ is the open immersion dense at each fibre of
$\overline P$, and $J(\bar X_{f_1})=(\bar X'-\bar X'_1)-\mathbb A^1_{X_1\cap X_2}$;
\item[{\rm(ii)}]
$\overline P$ is flat projective all of whose fibres
equi-dimensional of dimension one;
\item[{\rm(iii)}]
$Q=id_{\mathbb A^1}\times r$, where $r: X_1\cap X_2\to Spec(\mathbb F_q)$
is the structure map.
\end{itemize}
Recall that by the property
($5_1$) the scheme $X_2\cap X_1\cap Y$ is empty.
Thus $Y_{f_1}=J(Y_{f_1})$ is closed even in $\bar X'-\bar X'_1$.
So, $Y_{f_1}$ as the $\mathbb A^1_{\mathbb F_q}$ is projective and affine.
Hence it is finite over $\mathbb A^1_{\mathbb F_q}$.

The scheme $\bar P^{-1}(\{0\})$ is isomorphic via the morphism $\sigma$ to $X_2$ and thus it is geometrically irreducible of dimension $1$ and smooth.
Hence the morphism $\bar P$ is smooth over a neighborhood $S$ of the point $0 \in \mathbb A^1_{\mathbb F_q}$.
By the property ($2_1$) after shrinking the neighborhood $S$ of the point $0 \in \mathbb A^1_{\mathbb F_q}$
we may and will assume that
$Y_{f_1}$ is finite and \'{e}tale over $S$.

Pulling back the diagram \ref{SquareDiagram_2} via the open embedding
$S\hookrightarrow \mathbb A^1_{\mathbb F_q}$
we get a commutative diagram of the form
\begin{equation}
\label{SquareDiagram_3}
    \xymatrix{
     X_{prel} \ar[drr]_{p_{prel}}\ar[rr]^{j_{prel}}&&
\bar X_{prel} \ar[d]_{\bar p_{prel}}&& S_{X_1\cap X_2} \ar[ll]_{i_{prel}}\ar[lld]^{q_{prel}} &\\
     && S  &\\    }
\end{equation}
of morphisms satisfying the following conditions:
\begin{itemize}
\item[{\rm(i)}]
$j_{prel}$ is the open immersion dense at each fibre of
$\bar p_{prel}$, and $X_{prel}=\bar X_{prel}-S_{X_1\cap X_2}$;
\item[{\rm(ii)}]
$\bar p_{prel}$ is smooth projective all of whose fibres
are geometrically
irreducible of dimension one;
\item[{\rm(iii)}]
$q_{prel}=id_{S}\times r$, where $r: X_1\cap X_2\to Spec(\mathbb F_q)$
is the structure map.
\item[{\rm(iv)}]
if $Y_S=\bar p^{-1}_{prel}(S)\cap Y_{f_1}$, then $Y_S$ is finite \'{e}tale over $S$ and $Y_S$ is in $X_{prel}$.
\end{itemize}
Taking now
$X_{fin}=X_{prel}-Y_S$, $Y_{fin}=S_{X_1\cap X_2}\sqcup Y_S$, $\bar X_{fin}=\bar X_{prel}$,
$p_{fin}=p_{prel}|_{X_{fin}}$, $\bar p_{fin}=\bar p_{prel}$, $q_{fin}=q_{prel}\sqcup (p_{prel}|_{Y_S})$,
$j_{fin}=j_{prel}|_{X_{fin}}$, $i_{fin}=i_{prel}|_{S_{X_1\cap X_2}}\sqcup j_{prel}|_{Y_S}$
we get a diagram of the form (\ref{SquareDiagram}) subjection to the conditions
\ref{DefnElemFib}(i) to \ref{DefnElemFib}(iii).
This proves that the morphism $p_{fin}: X_{fin} \to S$ is an elementary fibration.

To prove the last assertion of the proposition \ref{ArtinsNeighbor} it sufficient to
choose $f_1$ and $f_2$ such that they additionally satisfy the condition
$H(f_1)\cap H(f_2)\cap Z=\emptyset$.
Clearly, this is possible.

The proposition follows.
\end{proof}

\end{document}